\date{Last changed 02 April 2020 by PBG}
\def\id{\operatorname{id}}
\def\pext{\operatorname{ext}}\def\pint{\operatorname{int}}
\newdimen\theight
\def\TeXref#1{%
             \leavevmode\vadjust{\setbox0=\hbox{{\tt
                     \quad\quad  {\small \textrm #1}}}%
             \theight=\ht0
             \advance\theight by \lineskip
             \kern -\theight \vbox to
             \theight{\rightline{\rlap{\box0}}%
             \vss}%
           }}%
\newtheorem{theorem}{Theorem}[section]
\newtheorem{lemma}[theorem]{Lemma}
\newtheorem{remark}[theorem]{Remark}
\newtheorem{example}[theorem]{Example}
\begin{document}
\title[Index density]{The local index density of the perturbed de~Rham complex}
\author
{J. \'Alvarez-L\'opez and P. Gilkey}
\address{JAL:  Department of Geometry and Topology, Faculty of Mathematics,
University of Santiago de Compostela, 15782 Santiago de Compostela,
Spain}
\email{jesus.alvarez@usc.es}
\address{PBG: Mathematics Department, University of Oregon, Eugene OR 97403-1222, USA}
\email{gilkey@uoregon.edu}
 \keywords{Witten deformation, local index density, de~Rham complex, Dolbeault complex,
equivariant index density}
\subjclass{58J20}
\thanks{Research partially supported by Projects MTM2016-75897-P and MTM2017-89686-P (AEI/FEDER, UE)}
\begin{abstract} A closed 1-form $\Theta$ on a manifold induces a perturbation $d_\Theta$ of the de~Rham complex.
This perturbation was originally introduced Witten for exact $\Theta$, and later extended by Novikov to the case of arbitrary closed $\Theta$.
Once a Riemannian metric is chosen, one obtains a perturbed Laplacian $\Delta_\Theta$ on a Riemannian manifold and a 
corresponding perturbed local index density for the de~Rham complex. 
Invariance theory is used to show that this local index density in fact does not depend on $\Theta$;
it vanishes if the dimension $m$ is odd, and it is the Euler form if $m$ is even.
(The first author, Kordyukov, and Leichtnam~\cite{AKL} established this result previously using other methods).
The higher order heat trace asymptotics of the twisted de~Rham complex are shown to exhibit non-trivial dependence on $\Theta$
so this rigidity result is specific to the local index density.
This result is extended to the case of manifolds with boundary where suitable boundary conditions are imposed. 
An equivariant version giving a Lefschetz trace formula for $d_{\Theta}$ is also established; in neither instance does
the twisting 1-form $\Theta$ enter. Let $\Phi$ be a $\bar\partial$ closed $1$-form of type $(0,1)$ on a Riemann
surface. Analogously, one can use $\Phi$ to define a twisted Dolbeault complex. By contrast with the de~Rham setting, 
the local index density for the twisted Dolbeault complex does exhibit a non-trivial
dependence upon the twisting $\bar\partial$-closed 1-form $\Phi$.\end{abstract}
\maketitle

\section{Introduction}

\subsection{Motivation}
Let $M$ be a compact manifold without boundary of dimension $m$. If $h$ is a smooth function on $M$, 
let $\pext(dh)$ be exterior multiplication by $dh$. For $s$ a real constant, 
Witten~\cite{W82} defined a perturbation of the de~Rham differential by setting $d_{sh}=d+s\pext(dh)$. Since $d_{sh}$ is 
gauge equivalent to $d$, the associated Betti numbers are unchanged. Let $g$ be a Riemannian metric on $M$; the
de~Rham codifferential is then given by $\delta_{sh}=\delta+s\pint(dh)$ where $\pint(dh)$ is interior multiplication by $dh$.
The perturbed Laplacian then takes the form $\Delta_{sh}=d_{sh}\delta_{sh}+\delta_{sh}d_{sh}$. Note that
$\Delta_{sh}$ in general is not gauge equivalent to $\Delta$ and thus can have a different spectrum.
When $h$ is a Morse function, Witten used this family of elliptic complexes to give a beautiful analytical proof 
of the Morse inequalities, by analyzing the spectrum of $\Delta_{sh}$ as $s\to\infty$. 
We also refer to subsequent work by Helffer, and Sj\"ostrand~\cite{HS85}, and by Bismut, and Zhang~\cite{BZ92}.

More generally, let $\Theta$ be a real closed $1$ form on $M$. By replacing $dh$ by $\Theta$,
Novikov~\cite{N81,N82} defined perturbed operators $d_{s\Theta}$, $\delta_{s\Theta}$ and $\Delta_{s\Theta}$ and
used these operators to estimate the zeros $\Theta$ if $\Theta$ is of Morse type. 
Because $d_{s\Theta}$ may not be gauge equivalent to $d$, the twisted Betti numbers defined by $d_{s\Theta}$ 
in general depend on $s$ and on $\Theta$.  However, the twisted Betti numbers are constant on the complement of a 
finite set $S$ of values of $s$, where the dimensions may jump. The twisted Betti numbers for $s\in\mathbb{R}\setminus S$
are called the Novikov numbers of $[\Theta]$; they are used in the Novikov version of the Morse inequalities. 
We refer to related work of  Braverman, and Farber~\cite{BF97} and of Pazhitnov~\cite{Pa87}. 
This remains an active research area; see, for example, \cite{BH01,BH08,HM06,Mi15}.

Since we shall not be examining asymptotic limits, we set $s=1$ henceforth.
We shall examine the local index density for the Novikov Laplacian $\Delta_{\Theta}$. We shall show the twisted index density vanishes
if $m$ is odd, while if $m$ is even, then the local index density is the Euler form and, in particular, does not depend on $\Theta$.
We obtain that the heat trace invariants of smaller order are trivial, and the heat trace invariants of higher order exhibit a nontrivial dependence on $\Theta$. Our initial motivation for this result is its application in certain trace formula for foliated flows, studied by the first author, Kordyukov, and Leichtnam~\cite{AKL}. We use methods of invariance theory; we note that a previous proof of this result was
given in \cite{AKL} by analyzing Getzler's proof of the local index theorem (see 
\cite{APS73x}); this result also follows from arguments of \cite{BZ92}. 

We shall establish two extensions of this result that are new. First,
we will examine the twisted index density for a manifold with boundary where we
impose suitable boundary conditions. Second, we will establish an equivariant version for maps giving a
 Lefschetz trace formula for $d_\Theta$. In
both instances, we show the resulting formulas are independent of $\Theta$. It is natural to conjecture this must always be the
case. We show this is not in fact true by examining a Novikov type perturbation of the Dolbeault complex for Riemann surfaces, 
using a $(0,1)$-form $\Theta$ with $\bar\partial\Theta=0$. We show that in this setting, the local
index density exhibits a nontrivial dependence on $\Theta$.

\subsection{Notational conventions} Let $\mathcal{M}=(M,g)$ be a compact smooth Riemannian manifold of dimension $m$
without boundary. Subsequently, we shall relax this condition and permit $M$ to have smooth boundary, but for the moment
we work in the context of closed manifolds. Let $V$ be a smooth vector bundle over $M$. We shall usually assume that
$V$ is equipped with a smooth Hermitian fiber metric and a local frame. We let $C^\infty(V)$ denote the space of smooth sections to $V$.
 
\subsection{Operators of Laplace Type}
Let $\vec x=(x^1,\dots,x^m)$ be a system of local coordinates on $M$. 
Adopt the {\it Einstein convention} and sum over repeated indices.
A second order partial differential operator $D$ on $C^\infty(V)$
is {\it Laplace type} if
\begin{equation}\label{E1.a}
D=-\left\{g^{ij}\id\partial_{x^i}\partial_{x^j}+A^k\partial_{x^k}+B\right\}
\end{equation}
where the coefficients $A^k$ and $B$ are linear endomorphisms of the bundle $V$
and the leading symbol of $D$
is scalar and is given by the metric tensor; this condition is independent of the coordinate system  and
the local frame for $V$ which are chosen. Let $dx=\sqrt{\det(g_{ij})}dx^1\dots dx^m$ denote the Riemannian measure on $M$.

The following result follows from work of Seeley~\cite{S68};
Seeley worked with complex powers of the Laplacian, but one can use the Mellin tranform to relate his
results to results concerning the heat equation and obtain thereby the following result.

\begin{theorem}\label{T1.1} Let $D$ be an operator of Laplace type over a compact Riemannian manifold $\mathcal{M}$ without boundary.
The heat operator $e^{-tD}$ is of trace class. There exist local invariants $a_n(x,D)$, which vanish if $n$ is odd,
so that
\begin{equation}\label{E1.b}
\operatorname{Tr}\{e^{-tD}\}\sim\sum_{n=0}^\infty t^{(n-m)/2}\int_Ma_n(x,D)dx\text{ as }t\downarrow0\,.
\end{equation}
\end{theorem}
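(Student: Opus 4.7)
My plan is to construct the heat kernel asymptotically on the diagonal, rather than routing through complex powers and the Mellin transform (although either strategy works).

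The first step is to build a formal parametrix by the classical Minakshisundaram--Pleijel--Hadamard ansatz. Fixing $x\in M$ and working in geodesic normal coordinates centered at $x$, with $r(x,y)$ the geodesic distance, I would seek $K(t,x,y)$ of the form
\begin{equation*}
K(t,x,y)\sim (4\pi t)^{-m/2}e^{-r(x,y)^2/(4t)}\sum_{k=0}^{\infty}t^{k}b_k(x,y),
\end{equation*}
where each $b_k$ is a smooth section of $V\boxtimes V^{*}$ defined in a normal neighborhood. Plugging this ansatz into the heat equation $(\partial_t+D)K=0$ and matching powers of $t$ produces a first-order transport ODE for $b_0$ along radial geodesics (with initial condition $b_0(x,x)=\id$) together with a sequence of inhomogeneous transport ODEs for $b_k$ ($k\geq1$), each solvable by integration along geodesics. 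Crucially, only integer powers of $t$ appear in the formal series, which is the source of the claimed parity: when one finally evaluates on the diagonal $y=x$ and multiplies by the Jacobian $\sqrt{\det g}$, the expansion of the pointwise trace becomes a series in $t^{(n-m)/2}$ in which all odd $n$ are automatically absent, so the local invariant $a_n(x,D)$ can be taken to be a universal constant times $\operatorname{tr}(b_{n/2}(x,x))\sqrt{\det g}$ for even $n$ and $0$ for odd $n$.

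Next I would promote this formal construction to a genuine asymptotic statement. Glue the local ansatz via a cutoff near the diagonal, extend by $0$ and use a partition of unity to obtain a global approximate kernel $P_N(t,x,y)$ whose error in $(\partial_t+D)P_N$ is $O(t^N)$ uniformly in $(x,y)$. The Duhamel (Volterra) iteration then produces the exact heat kernel as $K=P_N+P_N*R_N$, where $R_N$ is a convergent series of convolutions of $(\partial_t+D)P_N$; standard estimates show the correction is $O(t^{N-m/2})$ in $C^0$. Compactness of $M$ makes integration against $dx$ harmless, and convergence of $\sum_j e^{-t\lambda_j}$ (hence the trace class property) follows from the leading Weyl bound for the eigenvalues of $D$ obtained from the same construction.

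The main obstacle is the careful remainder bookkeeping in passing from the formal parametrix to a genuine short-time asymptotic expansion of the \emph{trace}: one must show that the errors introduced by the cutoff, by truncation at level $N$, and by the Duhamel iteration together decay faster than the next retained term, uniformly on $M$. This is a purely technical step but is what upgrades the formal series into the asymptotic statement \eqref{E1.b}. Once that is in hand, locality of $a_n(x,D)$ is manifest from the transport-equation construction, since each $b_k(x,x)$ is a universal polynomial in the components of $g^{ij}$, $A^k$, $B$ and their coordinate derivatives at $x$.
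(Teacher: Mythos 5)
Your route is correct in outline, but it is genuinely different from the one the paper takes: the paper does not construct the heat kernel at all, it simply invokes Seeley's theory of complex powers of an elliptic operator and passes from the meromorphic continuation of $\operatorname{Tr}(D^{-s})$ to the small-time heat trace expansion via the Mellin transform. Your Minakshisundaram--Pleijel--Hadamard parametrix plus Duhamel iteration is the classical self-contained alternative: it has the advantage of making the locality of $a_n(x,D)$ and the vanishing for odd $n$ completely transparent (only integer powers of $t$ occur in the on-diagonal expansion, and each $b_k(x,x)$ is a universal polynomial in the jets of $g^{ij}$, $A^k$, $B$ at $x$), at the cost of the remainder bookkeeping you correctly identify as the main technical burden. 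The Seeley route buys more for less in other directions: it applies uniformly to general elliptic (pseudo)differential operators, it connects the coefficients to residues and values of the zeta function, and it is the formalism that extends to the boundary-value problems (Greiner, Seeley) used later in the paper. Two small points to tighten in your write-up: since a Laplace type operator need not be self-adjoint, it is cleaner to get the trace class property from the smoothness of the kernel (e.g.\ writing $e^{-tD}=e^{-tD/2}e^{-tD/2}$ as a product of Hilbert--Schmidt operators, with the trace computed by integrating the kernel on the diagonal) rather than from a Weyl bound on eigenvalues; and note that the measure $dx$ in Equation~(\ref{E1.b}) is already the Riemannian measure $\sqrt{\det(g_{ij})}\,dx^1\cdots dx^m$, so the coefficient should be $a_n(x,D)=(4\pi)^{-m/2}\operatorname{Tr}\{b_{n/2}(x,x)\}$ without an extra factor of $\sqrt{\det g}$, which would otherwise be double-counted.
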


We will discuss these invariants in further detail in Section~\ref{S3}.

\subsection{Elliptic complexes of Dirac type}
Let $\{V^0,\dots,V^\ell\}$ be a finite collection of smooth vector bundles over $M$ which are equipped with
Hermitian fiber metrics. We suppose given first order partial differential operators $\alpha^p:C^\infty(V^p)\rightarrow C^\infty(V^{p+1})$ 
satisfying $\alpha^{p+1}\alpha^p=0$ for $0\le p<\ell$.
We say that
$$
\mathcal{V}:=\big\{\alpha^p:C^\infty(V^p)\rightarrow C^\infty(V^{p+1})\big\}_{0\le p\le\ell-1}
$$ 
is an {\it elliptic complex of Dirac type}
if the associated second order operators of the complex,
$D_{\mathcal{V}}^p:=(\alpha^p)^*\alpha^p+\alpha^{p-1}(\alpha^{p-1})^*$, are of Laplace type. 
In this context, define the associated cohomology groups
of the elliptic complex by setting
$$
H^p(\mathcal{V}):=\frac{\ker \left\{\alpha^p:C^\infty(V^p)\rightarrow C^\infty(V^{p+1})\right\}}
{\operatorname{image}\left\{\alpha^{p-1}:C^\infty(V^{p-1})\rightarrow C^\infty(V^p)\right\}}\,.
$$
The Hodge decomposition theorem then identifies
$H^p(\mathcal{V})=\ker (D_{\mathcal{V}}^p)$. These vector spaces are finite dimensional and we define
$$
\operatorname{index}\{\mathcal{V}\}:=\sum_{p=0}^\ell(-1)^p\dim\{H^p(\mathcal{V})\}
=\sum_{p=0}^\ell(-1)^p\dim\{\ker (D_{\mathcal{V}}^p)\}\,.
$$
The associated heat trace invariants of the elliptic complex are defined by setting:
\begin{equation}\label{E1.c}
a_n(x,\mathcal{V}):=\sum_{p=0}^\ell(-1)^pa_n(x,D_{\mathcal{V}}^p)\,.
\end{equation}
 A cancellation argument due to Bott shows that
\begin{equation}\label{E1.d}
\sum_{p=0}^\ell(-1)^p\dim\left\{\ker (D_{\mathcal{V}}^p)\right\}
= \sum_{p=0}^\ell(-1)^p\operatorname{Tr}\{e^{-tD_{\mathcal{V}}^p}\}
\end{equation}
is independent of $t$. Consequently, Equations (\ref{E1.b}), (\ref{E1.c}), and (\ref{E1.d}) yield
\begin{equation}\label{E1.e}
 \int_Ma_n(x,\mathcal{V})dx=
 \left\{\begin{array}{cl}0&\text{ if }n\ne m\\\operatorname{index}(\mathcal{V})&\text{ if }n=m\end{array}\right\}\,.
\end{equation}
The invariant $a_m(x,\mathcal{V})$ is called the {\it local index density}. 
This local formalism is crucial for the geometrical index theorem for manifolds with boundary of
Atiyah, Patodi, and Singer~\cite{APS73}. If $m$ is odd, then  $a_m(x,\mathcal{V})=0$ so the index vanishes and we
shall therefore usually restrict to the case where $m$ even. 

\subsection{The Chern-Gauss-Bonnet Theorem}
Let $\{e_1,\dots,e_m\}$ be a local orthonormal frame for the tangent bundle $TM$ and let $\{e^1,\dots,e^m\}$ be the
dual orthonormal frame for the cotangent bundle $T^*M$. If $I=(i_1,\dots,i_m)$ and $J=(j_1,\dots,j_m)$
are ordered collections of $m$ indices, we define
$$
\sigma(I,J):=g(e^{i_1}\wedge\dots\wedge e^{i_m},e^{j_1}\wedge\dots\wedge e^{j_m})\,.
$$
This vanishes unless $I$ and $J$ are collections of distinct indices; in this setting $\sigma(I,J)$ is the sign of the permutation taking $I$ to $J$.
Let $R_{ijkl}$ be the curvature tensor of $\mathcal{M}$; we adopt the sign convention that 
$R_{1221}=+1$ for the unit sphere in $\mathbb{R}^3$. If $m=2\bar m$ even, we define the Euler form by setting
\begin{equation}\label{E1.f}
\mathcal{E}_m(x,g):=\frac{(-1)^{\bar m}}{8^{\bar m}\pi^{\bar m}\bar m!}
\sum_{|I|=m,|J|=m}\sigma(I,J)R_{i_1i_2j_1j_2}\dots R_{i_{m-1}i_mj_{m-1}j_m}(x)\,.
\end{equation}
If $m$ is odd, we set $\mathcal{E}_m=0$.
Let $\tau$ be the scalar curvature, let $\|\rho\|^2$ the square of the norm of the Ricci tensor, and let $\|R\|^2$ be the
square of the norm of the full curvature tensor
of $\mathcal{M}$. Then
$$
\mathcal{E}_2=(4\pi)^{-1}\tau\text{ and }\mathcal{E}_4=(32\pi^2)^{-1}\left\{\tau^2-4\|\rho\|^2+\|R\|^2\right\}\,.
$$
We have the following result of Chern~\cite{C44} which gives a formula for the Euler-Poincar\'e characteristic 
$\chi(M):=\sum_p(-1)^p\dim\{H^p(M;\mathbb{R})\}$
in terms of curvature.

\begin{theorem}\label{T1.2}
Let $\mathcal{M}$ be a smooth compact even dimensional Riemannian manifold without boundary. Then
$$
\chi(M)=\int_M\mathcal{E}_m(x,g)dx\,.
$$
\end{theorem}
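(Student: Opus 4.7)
The plan is to realize the de~Rham complex as an elliptic complex of Dirac type in the sense of Section~1.4 and then identify its local index density with the Euler form. Set $V^p=\Lambda^pT^*M$ with the induced fiber metric and let $\alpha^p=d$; since $d^2=0$ and $D^p_{\mathcal V}=d\delta+\delta d$ is the Hodge Laplacian (which is of Laplace type), the data
$\mathcal V=\{d\colon C^\infty(\Lambda^pT^*M)\to C^\infty(\Lambda^{p+1}T^*M)\}$
forms an elliptic complex of Dirac type. Hodge theory identifies $H^p(\mathcal V)\cong H^p(M;\mathbb R)$, so $\operatorname{index}(\mathcal V)=\chi(M)$, and Equation~(\ref{E1.e}) with $n=m$ yields
\[
\chi(M)=\int_M a_m(x,\mathcal V)\,dx.
\]
The theorem is thus reduced to the pointwise identity $a_m(x,\mathcal V)=\mathcal E_m(x,g)$.

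For the identification, the method of choice is invariance theory. The invariant $a_m(x,\mathcal V)$ is constructed functorially from the jets of $g$ and transforms correctly under isometries, so H.~Weyl's theorem on orthogonal invariants expresses it as a polynomial in the components of the curvature tensor and its covariant derivatives. Counting weights under the rescaling $g\mapsto c^2g$ forces this polynomial to have total weight $m$. The crucial step is to show that, after one takes the alternating sum over $p$ implicit in (\ref{E1.c}), every monomial involving a covariant derivative of $R$ cancels, and in fact every polynomial in $R$ cancels except the one that defines $\mathcal E_m$. This cancellation phenomenon is what makes $a_m(x,\mathcal V)$ so rigid; it is extracted by a careful universality argument comparing the contributions of $\Lambda^pT^*M$ for different $p$, and it is the main obstacle in the proof.

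Once the shape of $a_m(x,\mathcal V)$ has been constrained to be a universal scalar multiple of $\mathcal E_m(x,g)$, the normalization is fixed by evaluation on test manifolds. Because the de~Rham complex of a Riemannian product is the tensor product of the factor complexes, $a_m(x,\mathcal V)$ is multiplicative under Riemannian products, and $\mathcal E_m$ is multiplicative by inspection of (\ref{E1.f}). The case $m=2$ can be handled by the known Minakshisundaram--Pleijel expansion for the Hodge Laplacian on $0$-, $1$- and $2$-forms, giving $a_2(x,\mathcal V)=(4\pi)^{-1}\tau=\mathcal E_2$; a direct evaluation on products $S^2\times\cdots\times S^2$, where $\chi=2^{\bar m}$, then pins the universal constant to $1$ in all even dimensions. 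Integrating the pointwise identity over $M$ delivers the Chern--Gauss--Bonnet formula.
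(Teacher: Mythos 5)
Your overall strategy is the heat-equation route the paper itself points to: combine the McKean--Singer identity (Equation~(\ref{E1.e})) with Hodge theory to write $\chi(M)=\int_M a_m(x,\mathcal{D}(\mathcal{M}))\,dx$, and then identify the local index density with the Euler form. Note that the paper does not actually prove Theorem~\ref{T1.2}; it cites Chern~\cite{C44} and observes that the identification $a_m(x,\mathcal{D}(\mathcal{M}))=\mathcal{E}_m(x,g)$ (Theorem~\ref{T1.3}, due to Patodi) furnishes a heat-equation proof. Your reduction to that identification is faithful to this point of view, and your normalization argument (multiplicativity of both sides under Riemannian products, the $m=2$ computation, evaluation on $(S^2)^{\bar m}$) is sound \emph{once} one knows that $a_m(x,\mathcal{D}(\mathcal{M}))$ is a universal constant multiple of $\mathcal{E}_m(x,g)$.

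The genuine gap is precisely at that point. The claim that ``after one takes the alternating sum over $p$ every monomial involving a covariant derivative of $R$ cancels, and in fact every polynomial in $R$ cancels except the one that defines $\mathcal{E}_m$'' is the entire content of Theorem~\ref{T1.3}; saying it is ``extracted by a careful universality argument comparing the contributions of $\Lambda^pT^*M$ for different $p$'' names the difficulty without supplying an argument, and Weyl's theorem plus weight counting by itself leaves a sizable space of weight-$m$ candidates (for instance $\tau^{m/2}$, $\tau^{(m-4)/2}\|R\|^2$, iterated Laplacians of $\tau$, and so on). To close the gap you must either cite Patodi's cancellation computation~\cite{P71}, as the paper in effect does, or run the invariance-theory argument that the paper deploys in Sections~\ref{S4}--5 for the twisted complex: show that the supertraced invariant lies in the kernel of the restriction map $r$ --- by forming the Riemannian product with a flat circle, on which all the supertraced heat invariants vanish, and applying the K\"unneth property of Lemma~\ref{L2.2}~(6) --- and then invoke Lemma~\ref{L4.2}, which states that this kernel is $\{0\}$ in weights $n<m$ and is spanned by $\mathcal{E}_m$ in weight $n=m$. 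With that structural input your evaluation on products of spheres does pin the universal constant to $1$ and the theorem follows; without it, the central identification is asserted rather than proved.
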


\subsection{The de Rham complex}
The classic example of an elliptic complex of Dirac type is given by the {\it de Rham complex}
$$
\mathcal{D}(\mathcal{M}):=\big\{d^p:C^\infty(\Lambda^p(M))\rightarrow C^\infty(\Lambda^{p+1}(M))\big\}_{0\le p\le m-1}\,,
$$
where $\Lambda^p(M):=\Lambda^p(T^*M)$ is the bundle of exterior $p$ forms and 
$d^p$ is exterior differentiation. The Hodge - de Rham theorem identifies $H^p(\mathcal{D}(\mathcal{M}))$ with the topological
cohomology groups $H^p(M;\mathbb{R})$ and consequently
$$
\operatorname{index}(\mathcal{D}(\mathcal{M}))=\sum_p(-1)^p\dim\{H^p(M;\mathbb{R})\}=\chi(M)\,.
$$
Equation~(\ref{E1.e}) gives a local formula
$$
\chi(M)=\int_Ma_m(x,\mathcal{D}(\mathcal{M}))dx\,.
$$
McKean and Singer~\cite{MS67} conjectured that the local index density $a_m(x,\mathcal{D}(\mathcal{M}))$
could be identified with the Euler form $\mathcal{E}_m(x,g)$ and verified this conjecture in dimensions $m=2$ and $m=4$.
Their conjecture was subsequently established by Patodi~\cite{P71} in all dimensions; the following result of Patodi
gives a heat equation proof of the Theorem~\ref{T1.2}.

\begin{theorem}\label{T1.3}
Let $M$ be a compact Riemannian manifold without boundary.
Then $a_m(x,\mathcal{D}(\mathcal{M}))=\mathcal{E}_m(x,g)$.
\end{theorem}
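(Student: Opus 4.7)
The plan is to follow Gilkey's invariance-theoretic approach to Patodi's theorem. By Theorem~\ref{T1.1}, $a_m(x,\mathcal{D}(\mathcal{M}))$ is a local invariant depending polynomially on the jets of $g$ at $x$. H. Weyl's theorem on the invariants of $O(m)$, applied in geodesic normal coordinates, permits one to rewrite this as a universal polynomial in the Riemann curvature tensor $R$ and its iterated covariant derivatives $\nabla^k R$, with indices contracted by the metric.

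The scaling $g\mapsto\lambda^2 g$ fixes the weight: the Laplacians $D_{\mathcal{D}}^p$ transform as $\lambda^{-2}D_{\mathcal{D}}^p$, and substituting in (\ref{E1.b}) and comparing powers of $t$ gives $a_n(x,\lambda^2 g;\mathcal{D})=\lambda^{-n}a_n(x,g;\mathcal{D})$. Assigning weight $2$ to each factor $R$ and weight $1$ to each $\nabla$, this forces $a_m$ to be a sum of monomials of total weight $m$ in $R$ and its derivatives.

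Next I would establish the multiplicative behavior under Riemannian products. For $\mathcal{M}=\mathcal{M}_1\times\mathcal{M}_2$ the de~Rham complex factorizes as a tensor product of complexes, the form Laplacians decompose accordingly, and the heat kernels obey a product formula. Combined with the Bott cancellation underlying (\ref{E1.d}), this yields the pointwise convolution identity
$$
a_n(x_1,x_2;\mathcal{D}(\mathcal{M}))=\sum_{n_1+n_2=n}a_{n_1}(x_1,\mathcal{D}(\mathcal{M}_1))\cdot a_{n_2}(x_2,\mathcal{D}(\mathcal{M}_2))\,.
$$
The crux is then Gilkey's axiomatic characterization of the Euler form: naturality, weight $m$, vanishing in odd dimensions, and the above product formula pin down $a_m(x,\mathcal{D})$ up to a universal multiple of $\mathcal{E}_m$. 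A crucial input is that the Weitzenb\"ock identity $\Delta=\nabla^*\nabla+\mathcal{R}$ on forms, together with the alternating sum across form degrees, eliminates all monomials involving $\nabla^k R$ with $k\ge 1$ at the top-order coefficient, leaving $a_m$ a homogeneous polynomial of degree $\bar m$ in the undifferentiated curvature. Within this class, the space of invariants satisfying the product formula is one-dimensional.

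Finally, I would fix the universal constant $c_m$ in $a_m(x,\mathcal{D})=c_m\,\mathcal{E}_m$ by a direct computation on the round $2$-sphere (where one recovers $a_2(x,\mathcal{D}(S^2))=\mathcal{E}_2=\tau/(4\pi)$), bootstrapping to all even $m$ via the multiplicativity of both $\mathcal{E}_m$ and $a_m$ on products $S^2\times\cdots\times S^2$. The main obstacle is the derivative-control step: for a generic Laplace-type operator, $a_m$ genuinely does involve $\nabla^k R$, and the vanishing of all such terms in the de~Rham setting relies on the delicate cancellations specific to the alternating structure of the complex, which is the heart of the argument and the part most sensitive to the choice of elliptic complex.
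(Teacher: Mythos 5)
Your overall strategy (invariance theory: locality, weight counting, products, then fixing a constant) is the right family of argument, and indeed the paper itself does not reprove Theorem~\ref{T1.3} but quotes Patodi~\cite{P71} and the invariance-theoretic proof of \cite{G73}, whose machinery is reproduced in Section~\ref{S4}. However, your proposal has a genuine gap exactly at its central step. You claim (i) that the Weitzenb\"ock identity plus the alternating sum over form degrees eliminates all monomials containing $\nabla^kR$, and (ii) that within the remaining class of weight-$m$ curvature polynomials, multiplicativity under Riemannian products leaves a one-dimensional space spanned by $\mathcal{E}_m$. Neither is established, and (ii) is false as you use it: your bootstrapping is over products of even-dimensional factors such as $S^2\times\cdots\times S^2$, but already in dimension $4$ the invariant $2\|\rho\|^2-\|R\|^2$ vanishes identically on every Riemannian product of two surfaces (where $\|\rho\|^2=2K_1^2+2K_2^2$ and $\|R\|^2=4K_1^2+4K_2^2$) while being nonzero on $S^4$; so knowing $a_4$ on such products cannot pin $a_4$ down to a multiple of $\mathcal{E}_4$. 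Likewise, point (i) is precisely the ``delicate cancellation'' you acknowledge but do not supply; it does not follow from the Weitzenb\"ock formula per se, and it is the heart of Patodi's computation.

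The mechanism that actually closes both gaps simultaneously, and the one this paper's framework is built on, is the restriction-map argument: since every heat invariant of the de~Rham complex of the flat circle vanishes (the two Laplacians in Example~\ref{Ex2.1} with $\Theta=0$ coincide), the K\"unneth property of Lemma~\ref{L2.2}~(6) shows $a_n(\cdot,\mathcal{D})$ vanishes on any product $M_1\times S^1$ with a flat circle factor, i.e. $a_n\in\ker(r:\mathcal{Q}_{m,n}\rightarrow\mathcal{Q}_{m-1,n})$. Lemma~\ref{L4.2} (H.~Weyl's first theorem together with the counting argument that every coordinate index of a surviving monomial must occur at least twice and an even number of times) then gives $a_n=0$ for $n<m$ and $a_m=c\,\mathcal{E}_m$; in particular the absence of $\nabla^kR$ terms is a conclusion of that lemma, not an input. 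Only after this uniqueness statement is your final step legitimate: integrate, compare with $\chi$ via Theorem~\ref{T1.2} (e.g.\ on products of spheres), and conclude $c=1$. So your constant-fixing is fine, but the characterization you rely on must be replaced by (and proved as) the kernel-of-restriction lemma, which is the nontrivial content of \cite{G73}.
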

Different proofs of Theorem~\ref{T1.3} were given subsequently by  Atiyah, Bott, and Patodi~\cite{ABP73}
using the spin-c complex and by Gilkey~\cite{G73} using invariance theory. Again, the subject has a lengthy history
and we refer to \cite{G95} for further details as the history is a lengthy one.

\subsection{The Witten deformation}So far, our discussion is entirely classical. We now turn to more recent history.
Let $\pext(\Theta):\omega\rightarrow\Theta\wedge\omega$ 
denote exterior multiplication and let
$\pint(\Theta)$ be the dual, interior multiplication. 
Witten~\cite{W82} introduced the deformed exterior derivative setting $d_{h}:=d+\pext(dh)$, i.e. $d_{h}\omega=d\omega+dh\wedge\omega$;
the adjoint is then $\delta_h:=\delta+\pint(dh)$.
This defines a deformed de Rham complex $\mathcal{D}(\mathcal{M})_{h}$.
Because the deformed differential $d_{h}\omega=e^{-h}d(e^{h}\omega)$ is gauge equivalent to the exterior derivative $d$,
it yields isomorphic cohomology groups. Witten then introduced a parameter $t$ and examined the behaviour of $d_{th}$ as $t\rightarrow\infty$.
In this paper, we consider a generalization of this deformation.
Let $\Theta$ be a closed $1$-form on $M$. Set 
$$
d_\Theta:=d+\pext(\Theta)\quad\text{and}\quad\delta_\Theta:=\delta+\pint(\Theta)\,
$$
We may then compute
\begin{eqnarray*}
d_\Theta ^2\omega&=&d(d\omega+\Theta \wedge\omega)+\Theta \wedge(d\omega+\Theta \wedge\omega)\\
&=&d^2\omega+d\Theta\wedge\omega-\Theta \wedge d\omega+\Theta \wedge d\omega+\Theta \wedge \Theta \wedge\omega=0\,.
\end{eqnarray*}
Since we have introduced a lower order perturbation, the associated second order operators 
$\Delta_\Theta^p:=d_\Theta^{p-1}\delta_\Theta^{p-1}+\delta_\Theta^pd_\Theta^p$ are still of Laplace type so
$$
\mathcal{D}(\mathcal{M})_\Theta:=\{d_\Theta ^p:C^\infty(\Lambda^p)\rightarrow C^\infty(\Lambda^{p+1})\}_{0\le p\le m-1}
$$
is an elliptic complex of Dirac type.
We will establish the following result in Section~\ref{S2} which gives some of the general properties of these cohomology groups.
Many of these properties already appear in earlier papers on the topic \cite{BF97,N86,Pa87};
we have provided proofs in this paper for the sake of completeness as our discussion is somewhat different than that given previously.

Let $\beta_p(M,\mathcal{D}(\mathcal{M})_\Theta):=\dim\{H^p(M,\mathcal{D}(\mathcal{M})_\Theta)\}$ be the twisted Betti numbers.

\begin{lemma}\label{L1.4} Let $\mathcal{M}$ be a compact connected Riemannian manifold.
\begin{enumerate}
\item If $[\Theta_1]=[\Theta_2]$ in $H^1(M;\mathbb{R})$, then 
$\beta_p(\mathcal{D}(\mathcal{M})_{\Theta_1})=\beta_p(M;\mathcal{D}(\mathcal{M})_{\Theta_2})$ for all $p$.
\item If $[\Theta]\ne0$ in $H^1(M;\mathbb{R})$, then $\beta_0(M,\Theta)=0$.
\item If $M$ is orientable, then $\beta_p(M,\Theta)=\beta_{m-p}(M,-\Theta)$.
\item Let $\Theta_i$ be closed 1-forms on compact Riemannian manifolds $\mathcal{M}_i=(M_i,g_i)$. Let $\mathcal{M}=(M_1\times M_2,g_1+g_2)$
be the product Riemannian manifold and let
$\Theta(x^1,x^2):=\Theta_1(x^1)+\Theta_2(x^2)$ on $M$. Then
\par\centerline{$\beta_n(\mathcal{D}(\mathcal{M})_\Theta)=\sum_{p+q=n}\beta_p(\mathcal{D}(M_1)_{\Theta_1})\beta_q(\mathcal{D}(M_2)_{\Theta_2})$.}
\item Let $M_g$ be the $g$-hole torus. If $[\Theta]\ne0$ in $H^1(M;\mathbb{R})$, then
$\beta_0(M_g)=1$, $\beta_1(M_g)=2g$, $\beta_2(M_g)=1$, $\beta_0(\mathcal{D}(\mathcal{M})_\Theta)=0$, 
$\beta_1(\mathcal{D}(\mathcal{M})_\Theta)=2g-2$, and $\beta_2(\mathcal{D}(\mathcal{M})_\Theta)=0$.
\end{enumerate}\end{lemma}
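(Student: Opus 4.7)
The plan is to treat the five assertions in order, since (5) reduces to (2)--(4) together with the standard homotopy invariance of the index. Parts (1) and (2) are both statements about twisted flat sections, and my main technical step is the universal-cover computation in (2); parts (3) and (4) are formal, being Poincar\'e-style duality and a K\"unneth argument respectively.

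For (1), writing $\Theta_2=\Theta_1+df$, I expect a direct calculation to give $d_{\Theta_2}=e^{-f}\,d_{\Theta_1}\,e^{f}$, so multiplication by $e^{f}$ is a cochain isomorphism between $\mathcal{D}(\mathcal{M})_{\Theta_1}$ and $\mathcal{D}(\mathcal{M})_{\Theta_2}$ and the Betti numbers agree degree by degree. For (2), I would lift to the universal cover $\pi:\tilde M\to M$ where $\tilde\Theta:=\pi^{*}\Theta=d\tilde h$ is exact and use the identity $d_{\tilde\Theta}\tilde f=e^{-\tilde h}\,d(e^{\tilde h}\tilde f)$. A $d_\Theta$-closed $f$ on $M$ lifts to $\tilde f=c\,e^{-\tilde h}$ for some constant $c$. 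Deck transformations send $\tilde h$ to $\tilde h+\int_{\gamma}\Theta$, and deck-invariance of $\tilde f$ forces $c=0$ as soon as some period $\int_{\gamma}\Theta$ is nonzero, which is exactly the condition $[\Theta]\ne0$. The only care-point here is the topological bookkeeping of periods and characters.

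For (3), I would use the pairing $(\alpha,\beta)\mapsto\int_M\alpha\wedge\beta$ between $\Lambda^{p}$ and $\Lambda^{m-p}$, available because $M$ is oriented. Integration by parts together with the graded-commutativity of exterior multiplication gives $\langle d_\Theta\alpha,\beta\rangle=(-1)^{p+1}\langle\alpha,d_{-\Theta}\beta\rangle$, so the transpose of $d_\Theta$ is, up to a sign, $d_{-\Theta}$ acting on complementary degree, and the induced perfect pairing on cohomology yields the duality. For (4), the bundle $\Lambda^{\bullet}(M_{1}\times M_{2})$ is the graded tensor product $\Lambda^{\bullet}(M_{1})\otimes\Lambda^{\bullet}(M_{2})$ and a direct check on decomposables gives
\begin{equation*}
d_{\Theta_{1}+\Theta_{2}}(\omega_{1}\otimes\omega_{2})=(d_{\Theta_{1}}\omega_{1})\otimes\omega_{2}+(-1)^{|\omega_{1}|}\omega_{1}\otimes d_{\Theta_{2}}\omega_{2},
\end{equation*}
so $\mathcal{D}(\mathcal{M})_\Theta$ is the tensor product of $\mathcal{D}(\mathcal{M}_{1})_{\Theta_{1}}$ and $\mathcal{D}(\mathcal{M}_{2})_{\Theta_{2}}$ as cochain complexes. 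Since each cohomology is finite dimensional over $\mathbb{R}$, the algebraic K\"unneth theorem yields the product formula with no Tor contributions.

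For (5), the topological Betti numbers of $M_{g}$ are classical. For the twisted numbers, part (2) gives $\beta_{0}(\mathcal{D}(\mathcal{M})_{\Theta})=0$, and part (3) combined with (2) applied to $-\Theta$ gives $\beta_{2}(\mathcal{D}(\mathcal{M})_{\Theta})=\beta_{0}(M,-\Theta)=0$. To compute $\beta_{1}$, I would invoke homotopy invariance of the index: the perturbation $\pext(\Theta)$ is of order zero, so the symbol of $\mathcal{D}(\mathcal{M})_\Theta$ agrees with that of the ordinary de~Rham complex and the two indices coincide. Thus $0-\beta_{1}+0=\chi(M_{g})=2-2g$, giving $\beta_{1}=2g-2$. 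The only item that genuinely requires work is (2); the remaining parts follow formally once the correct algebraic packaging is in place.
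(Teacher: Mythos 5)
Parts (1), (2), and (5) of your proposal are correct. Part (1) is the paper's own gauge argument (multiplication by $e^{f}$ intertwines $d_{\Theta_1}$ and $d_{\Theta_2}$). Your proof of (2) is a genuinely different route from the paper's: you pass to the universal cover, write $\tilde f=c\,e^{-\tilde h}$, and let deck invariance kill $c$ as soon as some period of $\Theta$ is nonzero; the paper instead stays on $M$, takes local primitives $\Theta=dh_n$ on geodesically convex balls, uses connectedness and a sign argument to write a nonzero solution as $f=e^{h}$ globally, and then concludes $\Theta=-dh$ is exact, a contradiction. Both are valid; yours trades the sign/connectedness bookkeeping for the identification $H^1(M;\mathbb{R})\cong\operatorname{Hom}(\pi_1(M),\mathbb{R})$ via periods. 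Part (5) is handled exactly as in the paper: (2), then (3) applied to $-\Theta$, then invariance of the index under the zeroth-order perturbation.

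There are, however, genuine gaps in (3) and (4). In (3), the adjointness relation $\int_M d_\Theta\alpha\wedge\beta=(-1)^{p+1}\int_M\alpha\wedge d_{-\Theta}\beta$ does give a well-defined pairing of $H^p(\mathcal{D}(\mathcal{M})_{\Theta})$ with $H^{m-p}(\mathcal{D}(\mathcal{M})_{-\Theta})$, but its perfectness does not follow formally: the chain-level spaces $C^\infty(\Lambda^pM)$ are infinite dimensional (their topological duals are currents, not forms), so nondegeneracy of the wedge pairing on cochains does not descend to cohomology without further input. The missing input is essentially the paper's argument: Clifford multiplication by the orientation form (equivalently the Hodge $\star$) intertwines $d_{\Theta}+\delta_{\Theta}$ with $d_{-\Theta}+\delta_{-\Theta}$, hence maps $\Theta$-harmonic $p$-forms isomorphically to $(-\Theta)$-harmonic $(m-p)$-forms; this proves (3) directly and is also what would justify your perfectness claim. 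In (4), the assertion that $\mathcal{D}(\mathcal{M})_{\Theta}$ \emph{is} the tensor product of $\mathcal{D}(\mathcal{M}_1)_{\Theta_1}$ and $\mathcal{D}(\mathcal{M}_2)_{\Theta_2}$ as cochain complexes is false: finite sums of decomposable forms are only a dense subcomplex of $C^\infty(\Lambda^n(M_1\times M_2))$, so the algebraic K\"unneth theorem is being applied to the wrong complex. Your Leibniz identity on decomposables is correct and needed, but the step that closes the argument is analytic: the paper writes $\Delta^n_{\Theta}=\bigoplus_{p+q=n}\{\Delta^p_{\Theta_1}\otimes\id+\id\otimes\Delta^q_{\Theta_2}\}$ and uses Hodge theory/spectral decomposition to conclude $\ker\Delta^n_{\Theta}=\bigoplus_{p+q=n}\ker\Delta^p_{\Theta_1}\otimes\ker\Delta^q_{\Theta_2}$ (a Mayer--Vietoris argument for the associated rank-one local system would also do). Without one of these, the K\"unneth and duality steps remain unproved.
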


The index of an elliptic complex is unchanged by perturbations; consequently, $\operatorname{index}(\mathcal{D}(\mathcal{M})_\Theta)=\chi(M)$ for any
$\Theta$ despite the fact that the cohomology groups can change. More is true. 
It is perhaps somewhat surprising that the local index density is unchanged by a Witten deformation as the following result,
which we will establish in Section~\ref{S4}, shows.

\begin{theorem}\label{T1.5}
Adopt the notation established above. 
\begin{enumerate}
\item $a_n(x,\mathcal{D}(\mathcal{M})_\Theta)=0$ for $n<m$. 
\item If $m$ is even, then $a_m(x,\mathcal{D}(\mathcal{M})_\Theta)=\mathcal{E}_m(x,g)$ is independent of $\Theta$.
\end{enumerate}
\end{theorem}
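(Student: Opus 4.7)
My approach is invariance theoretic, in the spirit of Gilkey's proof of Theorem~\ref{T1.3}. The first step is to observe that each $\Delta_\Theta^p$ is of Laplace type with canonical Bochner connection and endomorphism depending polynomially on $\Theta$ and $\nabla\Theta$; the Seeley--Gilkey expansion then presents $a_n(x,\Delta_\Theta^p)$, and hence the alternating sum $a_n(x,\mathcal{D}(\mathcal{M})_\Theta)$, as a universal polynomial $P_n(g,\Theta)$ in the covariant jets of the Riemann curvature $R$ and of the $1$-form $\Theta$. Because $d\Theta=0$, only the symmetric covariant derivatives of $\Theta$ enter. The polynomial has weight $n$, where $\nabla^{(a)}R$ contributes weight $a+2$ and $\nabla^{(a)}\Theta$ contributes weight $a+1$.

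Next I would exploit a Hodge-duality parity. On an oriented $\mathcal{M}$, the identity $*\,\pext(\Theta)=\pint(\Theta)\,*$ together with $*\,d\,*^{-1}=\pm\delta$ yields $*\,\Delta_\Theta^p\,*^{-1}=\Delta_{-\Theta}^{m-p}$, so $a_n(x,\Delta_\Theta^p)=a_n(x,\Delta_{-\Theta}^{m-p})$. Summing alternately gives $P_n(g,-\Theta)=(-1)^m P_n(g,\Theta)$; the non-orientable case reduces to this via the orientation cover. Decomposing $P_n=\sum_{k\ge 0}P_n^{[k]}$ by total $\Theta$-degree (counting derivatives of $\Theta$), only $k\equiv m\pmod 2$ can survive. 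The piece $P_n^{[0]}$ is the classical untwisted de~Rham heat invariant; by Theorem~\ref{T1.3} together with its standard companion (the vanishing $a_n(x,\mathcal{D}(\mathcal{M}))=0$ for $n<m$), $P_n^{[0]}$ equals $0$ for $n<m$ and $\mathcal{E}_m(x,g)$ for $n=m$.

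It remains to show $P_n^{[k]}=0$ for $k\ge 1$ and $n\le m$. On the flat model $(\mathbb{R}^m,g_{\mathrm{flat}})$ with $\Theta$ parallel, all cross-terms vanish and a direct computation gives $\Delta_\Theta^p=\Delta^p+|\Theta|^2\,\id$, so the heat kernels factor as $e^{-t\Delta_\Theta^p}=e^{-t|\Theta|^2}e^{-t\Delta^p}$. Expanding the asymptotic and using the Patodi vanishing on flat geometry yields $a_n(x,\mathcal{D}(\mathcal{M})_\Theta)=0$ for all $n\le m$, killing every ``pure-$\Theta$'' (curvature-free) monomial in $P_n^{[k]}$. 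Mixed monomials that couple curvature factors to $\Theta$-factors are then handled by the product structure of Lemma~\ref{L1.4}(4), which at the heat-invariant level takes the convolution form
\[
a_n\bigl(x,\mathcal{D}(\mathcal{M}_1\times\mathcal{M}_2)_{\Theta_1+\Theta_2}\bigr)=\sum_{n_1+n_2=n}a_{n_1}(x_1,\mathcal{D}(\mathcal{M}_1)_{\Theta_1})\,a_{n_2}(x_2,\mathcal{D}(\mathcal{M}_2)_{\Theta_2}),
\]
and enables a dimensional induction that dispatches mixed terms by pairing a lower-dimensional curved factor with a flat factor carrying a constant $\Theta$.

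The hard part will be the systematic bookkeeping of all admissible invariant monomials in $P_n^{[k]}$ at each weight $n\le m$. I expect the bulk of the work to lie in enumerating the Weyl invariants built from $R,\nabla^{(a)}R,\Theta,\nabla^{(a)}\Theta$ consistent with the weight and parity constraints, and in choosing enough ``flat-times-curved'' product geometries to force each coefficient to vanish. This step is conceptually routine but combinatorially demanding, requiring care with signs produced by the alternating sum over $p$ and with the available index contractions once $\Theta$-derivatives and curvature derivatives are both present.
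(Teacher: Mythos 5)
Your general framework is sound and is in the same invariance-theoretic spirit as the paper: each $\Delta_\Theta^p$ is of Laplace type, so $a_n(x,\mathcal{D}(\mathcal{M})_\Theta)$ is a universal weight-$n$ polynomial $P_n(g,\Theta)$ in the jets of $R$ and $\Theta$; the duality identity $P_n(g,\Theta)=(-1)^mP_n(g,-\Theta)$ is correct; the degree-zero piece is handled by Theorem~\ref{T1.3}; and the final evaluation at $\Theta=0$ is exactly how the paper pins down the constant. The genuine gap is the step you defer as ``conceptually routine but combinatorially demanding'': proving that every $\Theta$-dependent invariant of weight $n\le m$ has vanishing coefficient. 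The mechanism you propose for this would fail. The flat computation with parallel $\Theta$ only probes monomials that are powers of $|\Theta|^2$; curvature-free monomials containing derivatives of $\Theta$, such as $\Theta_{i;i}$, $(\Theta_{i;i})^2$, $\Theta_{i;j}\Theta_{i;j}$, $\Theta_{i;ijj}$, vanish identically on that family and are untouched (and Lemma~\ref{L3.4} shows such terms genuinely occur with nonzero coefficients once $n>m$, so nothing formal excludes them). Similarly, on a product of a curved factor with a flat factor carrying a constant $\Theta$, the invariants $\rho_{ij}\Theta_i\Theta_j$, $\Theta_i\tau_{;i}$, $\Theta_{i;j}\Theta_{i;j}$, etc.\ are identically zero, and in fact every super-traced heat coefficient of the flat constant-$\Theta$ factor vanishes (since $\sum_p(-1)^p\binom{k}{p}=0$), so the convolution formula only tells you that $P_n$ vanishes on this thin family of jets; no choice of such test geometries can separate, let alone kill, the coefficients of monomials that vanish on all of them. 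The proposed dimensional induction has the further problem that the convolution involves coefficients $a_{n_1}$ with $n_1>m_1$ on the curved factor, which by Lemma~\ref{L1.6} depend nontrivially on $\Theta$ and are not controlled by the inductive hypothesis.

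What is needed, and what the paper supplies, is an algebraic statement about the restriction map $r:\mathcal{P}_{m,n}\rightarrow\mathcal{P}_{m-1,n}$; geometrically, $r(a_n)=0$ because $a_n$ vanishes on products with a flat circle carrying $\Theta=0$. Lemma~\ref{L4.4} then shows, by a pure degree count in the algebraically independent jet variables $g_{ij/\alpha}$, $\Theta_{i/\beta}$ (after permuting and reflecting coordinates, every index must appear at least twice in each monomial of an element of $\ker(r)$, while $\operatorname{order}(g_{ij/\alpha})=|\alpha|\ge2$ and $\operatorname{order}(\Theta_{i/\beta})=|\beta|+1$), that $\ker(r)=\{0\}$ in weight $n<m$ and $\ker(r)=\operatorname{Span}\{\mathcal{E}_m\}$ in weight $m$, with the borderline case forcing the complete absence of $\Theta$-variables. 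This single counting lemma is the crux of the proof; it replaces, and cannot be replaced by, the enumeration-plus-product-examples step you left open, so as written your argument is incomplete precisely where the theorem's content lies.
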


The vanishing of $a_n$ for $n<m$ and the independence of $\Theta$ if $n=m$ given in Theorem~\ref{T1.5}
is sharp. Let $\mathbb{T}^m=\mathbb{R}^m/(2\pi\mathbb{Z})^m$ be the flat cubical torus and let $0\in\mathbb{T}^m$ be
the basepoint.  Since $\mathbb{T}^m$ is flat, $a_n(x,\mathcal{D}(\mathbb{T}^m))=0$ for $n>0$. We will establish the following result in Section~\ref{S3.3}.

\begin{lemma}\label{L1.6}
If $n$ is even and $n>m$, then the local formula $a_n(x,\mathcal{D}_\Theta)$ exhibits non-trivial dependence upon $\Theta$.
\end{lemma}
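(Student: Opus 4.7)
The plan is to exhibit, for each even $n>m$, an explicit closed $1$-form $\Theta$ on the flat cubical torus $\mathbb{T}^m$ for which $a_n(x,\mathcal{D}(\mathbb{T}^m)_\Theta)$ is not identically zero. I would take $\mathbb{T}^m=(\mathbb{T}^1)^m$ with the product flat metric and start from a product closed $1$-form $\Theta=\sum_{i=1}^m f_i(x^i)\,dx^i$ with $f_i\in C^\infty(\mathbb{T}^1)$; each summand is automatically closed, and the twisted de~Rham complex factorizes as a tensor product of the one-dimensional complexes $\mathcal{D}(\mathbb{T}^1)_{f_i\,dx^i}$.

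From the factorization of the heat kernels one obtains the pointwise product formula
\[
a_n(x,\mathcal{D}(\mathbb{T}^m)_\Theta)=\sum_{n_1+\cdots+n_m=n}\prod_{i=1}^m a_{n_i}\bigl(x^i,\mathcal{D}(\mathbb{T}^1)_{f_i\,dx^i}\bigr),
\]
reducing the question to a one-dimensional calculation. On a single circle, a direct calculation gives $\Delta_{\Theta_i}^p=-\partial^2+f_i^2+(2p-1)f_i'$ for $p=0,1$, which is Laplace type with trivial connection and scalar potential $E^p=f_i^2+(2p-1)f_i'$. Inserting this into the standard Gilkey formula $a_2(x,D)=(4\pi)^{-m/2}(\tau-6E)/6$, which on the flat circle reduces to $-E/\sqrt{4\pi}$, yields $a_2(x^i,\mathcal{D}(\mathbb{T}^1)_{f_i\,dx^i})=2f_i'(x^i)/\sqrt{4\pi}$; moreover $a_0$ vanishes by the rank cancellation $(1-1)/\sqrt{4\pi}=0$, and $a_{n_i}=0$ for odd $n_i$. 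Hence in the product formula only decompositions with every $n_i\ge 2$ even contribute. For $n=2m$ this forces $n_i=2$ for all $i$ and gives $a_{2m}(x,\mathcal{D}(\mathbb{T}^m)_\Theta)=\prod_{i=1}^m 2f_i'(x^i)/\sqrt{4\pi}$, which is not identically zero when, e.g., $f_i(x)=\sin x$. Since $2m>m$, this already proves Lemma~\ref{L1.6} at $n=2m$; the analogous calculation with higher Gilkey coefficients on the circle handles all even $n\ge 2m$.

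The hard part will be the intermediate range $m<n<2m$, which is non-empty only when $m\ge 3$, because the product argument just given forces $n\ge 2m$ and no product $\Theta$ can detect non-triviality there. For such $n$ I would replace the product $\Theta$ by a non-product closed $1$-form, recast each $\Delta_\Theta^p$ in Bochner form $\nabla^*\nabla+E^p$ with $\nabla$ and $E^p$ explicit in terms of $\Theta$ and its covariant derivatives, and apply the universal Gilkey expansion for $a_n$ term by term. The principal obstacle is then algebraic rather than analytic: one must verify that at least one weight-$n$ monomial in $\Theta$ and its covariant derivatives survives the alternating sum $\sum_p(-1)^p a_n(x,\Delta_\Theta^p)$ with a nonzero universal coefficient, so that the resulting polynomial in $\Theta$ is not identically zero.
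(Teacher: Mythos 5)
Your reduction to the circle is sound and matches the paper's one--dimensional computation (your $a_2=f_i'/\sqrt{\pi}$ on the circle agrees with Lemma~\ref{L3.4}), but the argument as it stands proves the lemma only for $n\ge 2m$, and you have correctly diagnosed why: on the flat torus with a product $1$-form each circle factor contributes nothing below weight $2$ (its $a_0$ supertrace cancels and odd terms vanish), so every nonzero term in the product formula needs all $n_i\ge2$, forcing $n\ge2m$. The intermediate range $m<n<2m$ (nonempty for every $m\ge3$) is exactly the content of the lemma that remains, and your proposed fallback --- a generic non-product $\Theta$ fed through the universal Gilkey expansion, checking that some weight-$n$ monomial survives the alternating sum over $p$ --- is precisely the combinatorial problem one wants to avoid; you give no mechanism for verifying a nonzero universal coefficient there, so this is a genuine gap, not a routine completion.

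The paper closes that range with a different choice of example: instead of an all-flat product, take the product with a round even sphere. Since $(S^{2\ell},g_0)$ is homogeneous and $\chi(S^{2\ell})=2$, Equation~(\ref{E1.e}) forces $a_{2a}(\xi,\mathcal{D}(S^{2\ell}))=0$ for $2a\ne2\ell$ and $=2\operatorname{vol}(S^{2\ell})^{-1}$ for $2a=2\ell$; the sphere factor therefore absorbs exactly $2\ell$ units of weight with a nonzero constant. For $m=2\ell+1$ one takes $S^1\times S^{2\ell}$ with $\Theta=\theta\,dx$ and the product formula collapses to
\begin{equation*}
a_{n}\bigl((x,\xi),\mathcal{D}(\mathcal{M})_\Theta\bigr)=2\operatorname{vol}(S^{2\ell})^{-1}\,a_{n-2\ell}\bigl(x,\mathcal{D}(S^1)_\Theta\bigr),
\end{equation*}
which is nontrivial in $\Theta$ as soon as $n-2\ell\ge2$, i.e.\ for every even $n>m$; for $m=2\ell+2$ one uses $\mathbb{T}^2\times S^{2\ell}$, where the $\mathbb{T}^2$ factor (your computation with two circle directions) is nontrivial from weight $4$ on, covering all even $n\ge m+2$. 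You would also need to tighten your claim that the product argument ``handles all even $n\ge2m$'': different decompositions $n_1+\dots+n_m=n$ could in principle cancel, and the paper avoids this by isolating the coefficient of a distinguished top-derivative monomial such as $\theta_1^{(2a-1)}\theta_2^{(2b-1)}$ (Lemma~\ref{L3.4}~(2)); the same leading-term argument would repair that step in your version.
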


We have assumed that $\Theta$ is real in considering the Witten deformation. Should $\Theta$ be purely imaginary,
we can use gauge invariance to show the local index density is unchanged. We argue as follows. Locally, we can choose a
real function $h$ so that $\sqrt{-1}dh=\Theta$. We
consider a locally defined unitary deformation to define
$d_{\Theta}=:e^{-\sqrt{-1}h}de^{\sqrt{-1}h}$; the adjoint is then given by $\delta_{\Theta}=\delta-\sqrt{-1}\pint(dh)$ and
the associated Laplacian is $\Delta_\Theta=e^{-\sqrt{-1}h}\Delta e^{\sqrt{-1}h}$. Since $\Delta_\Theta$ and $\Delta$
differ by a locally defined unitary gauge transformation, $a_k(x,\Delta_\Theta)=a_k(x,\Delta)$. Consequently, after taking the super-trace, the index density (and in fact all invariants $a_k(x,\mathcal{D}(\mathcal{M})_\Theta)$)
are unchanged. 

\subsection{The Dolbeault Complex} Theorem~\ref{T1.5} shows the local index density is not affected by the perturbing function $h$
in the case of the de Rham complex. It is natural to conjecture, therefore, that this might always be the case. This is, however, not the
case as we see as follows.
If $\mathcal{M}$ is a holomorphic manifold of dimension $m=2\bar m$ which is equipped with a Hermitian inner product, let
$$
\mathcal{C}(\mathcal{M}):=
\left\{\sqrt2\ \bar\partial^q:C^\infty(\Lambda^{0,q}(\mathcal{M}))\rightarrow C^\infty(\Lambda^{0,q+1}(\mathcal{M}))\right\}
$$
be the Dolbeault complex; the factor of $\sqrt2$ is introduced to ensure the resulting second order operators are of Laplace
type and is inessential. Let $\operatorname{Td}_{\bar m}$ 
be the Todd form; we refer to Hirzebruch~\cite{H56} for details. 
The analogue of Theorem~\ref{T1.2} in this situation is
the classical Riemann-Roch formula:
\begin{theorem}\label{T1.7} Adopt the notation established above. Then
$$
\operatorname{Index}(\mathcal{C}(\mathcal{M}))=\int_M\operatorname{Td}_{\bar m}(x)dx\,.
$$
\end{theorem}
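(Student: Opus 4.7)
The plan is to mimic the heat-equation proof of Theorem~\ref{T1.3} (Patodi's argument for Chern-Gauss-Bonnet) in the holomorphic setting. First I would verify that $\mathcal{C}(\mathcal{M})$ is an elliptic complex of Dirac type: $\bar\partial^2=0$ is standard, and a direct symbol computation shows that the second-order operators $(\sqrt{2}\bar\partial)(\sqrt{2}\bar\partial)^*+(\sqrt{2}\bar\partial)^*(\sqrt{2}\bar\partial)=2(\bar\partial\bar\partial^*+\bar\partial^*\bar\partial)$ are of Laplace type with scalar leading symbol $|\xi|_g^2\,\id$. The factor $\sqrt{2}$ is inserted exactly for this normalization, so Theorem~\ref{T1.1} and Equation~(\ref{E1.e}) apply directly. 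The Hodge decomposition identifies $\ker(D_{\mathcal{C}}^q)$ with the Dolbeault cohomology $H^{0,q}(\mathcal{M})$, so $\operatorname{Index}(\mathcal{C}(\mathcal{M}))=\sum_q(-1)^q\dim H^{0,q}(\mathcal{M})$ is the holomorphic Euler characteristic $\chi(\mathcal{M},\mathcal{O})$, and Equation~(\ref{E1.e}) reduces the claim to the identity $\int_M a_m(x,\mathcal{C}(\mathcal{M}))\,dx=\int_M\operatorname{Td}_{\bar m}(x)\,dx$.

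The identification of the integrated heat invariant with the Todd form would then follow the invariance-theoretic recipe used elsewhere in the paper. The local invariant $a_m(x,\mathcal{C}(\mathcal{M}))$ is a universal polynomial in the jets at $x$ of the Hermitian metric and the integrable almost-complex structure. A scaling argument in $g$ forces $a_m$ to have total weight $m=2\bar m$ in the curvature data, and a Weyl-type invariance argument reduces the candidate integrands to characteristic forms in the Chern classes of $TM$. Using the product formula (both $\chi$ and $\operatorname{Td}$ are multiplicative under products of Hermitian manifolds) together with normalization on products of complex projective spaces $\mathbb{C}P^{k_1}\times\cdots\times\mathbb{C}P^{k_r}$, whose Dolbeault Euler characteristics and Todd numbers both equal $1$, one pins down the universal constants and concludes that the integrand is $\operatorname{Td}_{\bar m}$ up to an exact form (which is all that is needed).

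The main obstacle is the non-Kähler case: when $\mathcal{M}$ is merely Hermitian, $a_m(x,\mathcal{C}(\mathcal{M}))$ may depend pointwise on the torsion of the Chern connection and need not equal $\operatorname{Td}_{\bar m}(x)$ locally, unlike the de~Rham situation of Theorem~\ref{T1.3}. Since Theorem~\ref{T1.7} only asserts equality of integrals, the torsion contributions can either be shown to integrate to zero by a Chern-Weil / Stokes argument, or one can reduce to the Kähler case by a deformation argument. Alternatively, and most efficiently, one invokes the classical Hirzebruch-Riemann-Roch theorem, or equivalently the Atiyah-Singer index theorem applied to the Dolbeault complex, which yields the desired formula directly and is the standard reference for this result.
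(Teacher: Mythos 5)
The paper does not actually prove Theorem~\ref{T1.7}: it is quoted as the classical Riemann--Roch formula, with Hirzebruch~\cite{H56} as the reference, and the heat-equation refinement is postponed to Theorem~\ref{T1.8} (Patodi~\cite{P71a}), which requires the K\"ahler hypothesis. So your closing sentence --- invoke Hirzebruch--Riemann--Roch, equivalently Atiyah--Singer for the Dolbeault complex --- is exactly what the paper does, and as a citation it is unobjectionable.

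The heat-equation route you sketch first, however, is only sound under the K\"ahler assumption; it is essentially the Patodi/Gilkey/ABP argument \cite{P71a,G73a,ABP73}, and your two proposed patches for the general Hermitian case do not work as stated. A ``reduction to the K\"ahler case by deformation'' is impossible in general: a compact complex manifold (a Hopf surface, say) need not admit any K\"ahler metric, whereas Theorem~\ref{T1.7} is asserted for an arbitrary Hermitian $\mathcal{M}$; metric-independence of the index only helps when a K\"ahler metric exists. And the claim that the torsion contributions ``integrate to zero by a Chern--Weil/Stokes argument'' is not a routine verification: the pointwise discrepancy between $a_{2\bar m}(x,\mathcal{C}(\mathcal{M}))$ and $\operatorname{Td}_{\bar m}(x)$ is genuinely nonzero in the non-K\"ahler setting (the paper cites \cite{GNP97} for precisely this), and knowing that this discrepancy is a divergence for every Hermitian structure is equivalent to the integrated index formula you are trying to prove, so as written that step is circular. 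In short: either restrict the invariance-theoretic argument to the K\"ahler case (which proves the stronger Theorem~\ref{T1.8} and hence Theorem~\ref{T1.7} there), or do what the paper does and cite Hirzebruch--Riemann--Roch/Atiyah--Singer for the general statement.
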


Patodi~\cite{P71a} generalized Theorem~\ref{T1.3} to the complex setting by showing that
the local index density agrees with the Todd form if the underlying geometry is K\"ahler; we also refer to subsequent work by 
Atiyah, Bott, and Patodi~\cite{ABP73} and 
Gilkey~\cite{G73a}; again, the literature is extensive.
The following result of Patodi gives a heat equation proof of Theorem~\ref{T1.7} in the K\"ahler setting.

\begin{theorem}\label{T1.8}
If $M$ is a K\"ahler manifold, then $a_{2\bar m}(x,\mathcal{C}(\mathcal{M}))=\operatorname{Td}_{\bar m}$.
\end{theorem}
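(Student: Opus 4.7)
My plan is to follow the invariance-theoretic strategy used by Gilkey in \cite{G73a} to recover Patodi's Theorem~\ref{T1.8}, in the same spirit as the argument alluded to for Theorem~\ref{T1.5}. The aim is to characterize $a_{2\bar m}(x,\mathcal{C}(\mathcal{M}))$ as a characteristic form of the holomorphic tangent bundle and then pin down the particular combination by appealing to Theorem~\ref{T1.7}.

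The first step is to combine Theorem~\ref{T1.1} with H.~Weyl's theorem on $U(\bar m)$-invariants. The quantity $a_{2\bar m}(x,\mathcal{C}(\mathcal{M}))$ is a universal local invariant of weight $2\bar m$ in the jets of the Hermitian metric at $x$. Passing to K\"ahler normal coordinates, in which the pure holomorphic and pure antiholomorphic derivatives of $g_{i\bar j}$ at the origin vanish to all orders and the remaining mixed jets are polynomial in the Hermitian curvature components, and imposing invariance under the structure group $U(\bar m)$, the formula reduces to a polynomial $P$ of weight $2\bar m$ in the curvature 2-form $\Omega$ of the Chern connection on $T^{1,0}M$.

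The second step is multiplicativity. For a K\"ahler product $\mathcal{M}=\mathcal{M}_1\times\mathcal{M}_2$, the Dolbeault complex decomposes as an external tensor product, the heat kernels factor, and the cancellation argument of Bott behind (\ref{E1.d}) yields
\begin{equation*}
a_{2\bar m}\bigl(x,\mathcal{C}(\mathcal{M})\bigr)=\sum_{p+q=\bar m}a_{2p}\bigl(x_1,\mathcal{C}(\mathcal{M}_1)\bigr)\wedge a_{2q}\bigl(x_2,\mathcal{C}(\mathcal{M}_2)\bigr).
\end{equation*}
Together with the universal polynomial description from step~1, this multiplicative structure forces $P(\Omega)$ to lie in the ring generated by the Chern forms $c_j(TM)$ --- that is, to be a genuine characteristic form of the holomorphic tangent bundle.

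Finally, for any compact K\"ahler $\mathcal{M}$, Equation~(\ref{E1.e}) together with Theorem~\ref{T1.7} yield
\begin{equation*}
\int_M a_{2\bar m}\bigl(x,\mathcal{C}(\mathcal{M})\bigr)\,dx=\operatorname{index}\bigl(\mathcal{C}(\mathcal{M})\bigr)=\int_M\operatorname{Td}_{\bar m}(x)\,dx.
\end{equation*}
Since products $\prod_i\mathbb{CP}^{n_i}$ with $\sum_i n_i=\bar m$ realize enough independent Chern numbers to separate characteristic forms of degree $2\bar m$, this forces the pointwise identity $a_{2\bar m}(x,\mathcal{C}(\mathcal{M}))=\operatorname{Td}_{\bar m}(x)$. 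The main obstacle I anticipate is step~1: verifying rigorously, through a careful weight count and the full strength of Weyl's first fundamental theorem for the unitary group, that no extraneous terms involving the K\"ahler form itself or non-characteristic curvature monomials survive in $P$. It is precisely here that the K\"ahler hypothesis --- rather than a general Hermitian structure --- is indispensable; indeed, the contrast drawn in the introduction with the $(0,1)$-form deformation of $\bar\partial$ shows that once K\"ahler-ness is even mildly weakened, new local terms can and do appear.
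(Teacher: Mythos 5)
The paper itself does not prove Theorem~\ref{T1.8}: it is quoted from Patodi~\cite{P71a}, with \cite{ABP73} and \cite{G73a} cited for alternative proofs, so your sketch has to be judged as a reconstruction of the invariance-theoretic route of \cite{G73a}. Measured that way, it has a genuine gap: at no point do you actually establish the \emph{pointwise} statement that the index density is a characteristic form, and neither of the two mechanisms you invoke can do it. (i) Weyl's theorem plus a weight count in the ambient dimension does not reduce a weight-$2\bar m$ unitary invariant to a polynomial in the curvature alone: the space of such invariants also contains covariant-derivative terms (already for $\bar m=2$ one has $\tau_{;k\bar k}$ and its relatives), which are $U(\bar m)$-invariant and of the correct weight. (ii) Multiplicativity cannot exclude them: the product formula you write is the Dolbeault analogue of Lemma~\ref{L2.2}~(6) and is satisfied by \emph{all} the heat coefficients $a_n$, including the lower-order ones, which are certainly not characteristic forms; so ``multiplicativity forces membership in the Chern ring'' is false as a matter of logic. (iii) Integrating against products of projective spaces and invoking Theorem~\ref{T1.7} determines the invariant only modulo local expressions whose integral vanishes on every closed manifold (divergence terms such as $\Delta\tau$ times curvature monomials); no collection of integrated test data can ever upgrade to a pointwise identity. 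This is not a technicality one can wave at with ``a careful weight count'': it is the heart of the theorem.

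The missing idea is precisely the one the paper uses in the real case and \cite{G73a} uses in the K\"ahler case: the restriction map $r$ and the characterization of its kernel. One shows $r(a_{m,n})=0$ by taking a Riemannian (here: K\"ahler) product with a flat factor, on which the heat invariants vanish, exactly as in Section~5; then the counting argument (the analogues of Lemmas~\ref{L4.2} and \ref{L4.4}, with structure group $U(\bar m)$ and K\"ahler normal coordinates) shows that elements of the kernel vanish below the critical weight and that in the critical weight $n=2\bar m$ the kernel consists exactly of Chern monomials of top weight --- in particular all derivative-of-curvature terms are eliminated pointwise. Only after that reduction does evaluation on products $\prod_i\mathbb{CP}^{n_i}$, whose Chern numbers separate degree-$\bar m$ characteristic forms, legitimately force $a_{2\bar m}=\operatorname{Td}_{\bar m}$ via Theorem~\ref{T1.7}; the same mechanism is what yields the vanishing of the lower coefficients, which is part of Patodi's theorem and which your sketch does not address. (A minor further point: K\"ahler normal coordinates normalize the metric only to second order, not ``to all orders''.) Also note that the K\"ahler hypothesis enters through this kernel computation, not through the Witten-type deformation discussed in the introduction; the counterexamples of \cite{GNP97} concern general Hermitian metrics, not the deformed complex of Lemma~\ref{L1.9}.
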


The assumption that $M$ is K\"ahler is essential;
Gilkey, Nik\v cevi\'c, and Pohjanpelto~\cite{GNP97} showed that Theorem~\ref{T1.8} fails
in general if $\mathcal{M}$ is not K\"ahler, i.e. the local index density is generically not given by the Todd form in the non-K\"ahler setting.

Let $\Theta$ be 1-form of type $(0,1)$ with $\bar\partial\Theta=0$. We again introduce the Witten deformation defining
an elliptic complex of Dirac type
\begin{equation}\label{E1.g}
\mathcal{C}(\mathcal{M})_\Theta:=\left\{\sqrt2\ (\bar\partial^q+\pext(\Theta)):C^\infty(\Lambda^{0,q}(\mathcal{M}))
\rightarrow C^\infty(\Lambda^{0,q+1}(\mathcal{M}))\right\}\,.
\end{equation}
However, even in the 2-dimensional setting (which is always K\"ahler),
the local index density does not agree with the Todd form in the perturbed setting. We will establish the following result in Section~\ref{S3.4} illustrating this;
we present this example to show that Assertion~2 of Theorem~\ref{T1.5} does not follow from some universal principle.
Let $\Re(\Theta)$ be the real part of the 1-form $\Theta$.
\begin{lemma}\label{L1.9}
If $\mathcal{M}$ is a Riemann surface, then
$\displaystyle a_2(x,\mathcal{C}(\mathcal{M})_\Theta)=\frac{\tau}{8\pi}-\frac{\delta(\Re(\Theta))}{\pi}$. 
\end{lemma}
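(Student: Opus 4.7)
The plan is to put both Laplace-type operators $D^0_\Theta$ and $D^1_\Theta$ into Weitzenb\"ock normal form $(\nabla')^*\nabla'+E$ and apply the standard dimension-two heat invariant formula, as in Patodi's proof of Theorem~\ref{T1.8}. Work in a local isothermal coordinate $z=x+iy$ with $g=h(dx^2+dy^2)$; since any $(0,1)$-form has the shape $\Theta=\phi\,d\bar z$, write $\phi=a+ib$ with $a=\Re\phi$, $b=\Im\phi$. The pointwise inner product gives $\langle d\bar z,d\bar z\rangle=2/h$, hence $\bar\partial^*(u\,d\bar z)=-(2/h)\partial_zu$ and $\pext(\Theta)^*(u\,d\bar z)=(2\bar\phi/h)u$.

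First expand $D^0_\Theta=2(\bar\partial+\pext(\Theta))^*(\bar\partial+\pext(\Theta))$ acting on functions and $D^1_\Theta=2(\bar\partial+\pext(\Theta))(\bar\partial+\pext(\Theta))^*$ acting on sections $u\,d\bar z$ of $\Lambda^{0,1}$, reading off the first-order coefficient $-A^k\partial_k$ and zeroth-order coefficient $-B$ in each case. In isothermal coordinates $g^{kl}\Gamma^j_{kl}=0$, so the connection $1$-form and endomorphism of the Weitzenb\"ock decomposition take the simple form $\omega'_j=(h/2)A^j$ and $E=-B+g^{ij}(\partial_i\omega'_j+\omega'_i\omega'_j)$. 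Both bundles have complex rank one; the dimension-two formula $a_2(x,D)=(4\pi)^{-1}(\tau/6-E)$ together with the alternating-sum definition~(\ref{E1.c}) reduce the problem to computing
\[a_2(x,\mathcal{C}(\mathcal{M})_\Theta)=\frac{1}{4\pi}\bigl(E^1_\Theta-E^0_\Theta\bigr).\]

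The key observation is that the perturbation by $\Theta$ contributes the \emph{same} $1$-form $\tilde\omega=2ib\,dx-2ia\,dy$ to both $\omega'^{(0)}_\Theta$ and $\omega'^{(1)}_\Theta$. Writing $E^p_\Theta=E^p_0+(E^p_\Theta-E^p_0)$, the unperturbed part furnishes $E^1_0-E^0_0=\tau/2$, which is Patodi's identity $a_2(x,\mathcal{C}(\mathcal{M}))=\tau/(8\pi)$. For the perturbative difference, the purely $\tilde\omega$ contributions cancel between the two bundles, leaving the difference of zeroth-order coefficients $\Delta B=(B^{(1)}_\Theta-B^{(1)}_0)-(B^{(0)}_\Theta-B^{(0)}_0)$ together with a cross-term $2g^{ij}\omega^{(1,0)}_i\tilde\omega_j$, where $\omega^{(1,0)}$ is the unperturbed Chern connection on $\Lambda^{0,1}$ (visible in the $\partial_{\bar z}(h^{-1})$ piece of $D^1_0$). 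The one nontrivial verification --- and the principal obstacle --- is checking that the $h$-derivative part of $\Delta B$ coming from $\partial_{\bar z}(\bar\phi/h)$ in $B^{(1)}_\Theta$ exactly cancels this cross-term, leaving the conformally invariant residue
\[(E^1_\Theta-E^1_0)-(E^0_\Theta-E^0_0)=-\frac{4(\partial_xa+\partial_yb)}{h}=-4\delta(\Re\Theta),\]
since $\delta(f\,dx+g\,dy)=-h^{-1}(\partial_xf+\partial_yg)$ in isothermal coordinates. Adding $\tau/2$ and dividing by $4\pi$ yields the claimed identity.
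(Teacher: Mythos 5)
Your proposal is correct in substance, and it does go through: I checked the one step you flag as the principal obstacle, and the cancellation is real. With $\omega^{C}$ the unperturbed connection on $\Lambda^{0,1}$ one has $\omega^{C}_x=h\partial_{\bar z}(h^{-1})$, $\omega^{C}_y=-ih\partial_{\bar z}(h^{-1})$, so the cross term $2g^{ij}\omega^{C}_i\tilde\omega_j=\tfrac2h\bigl(\omega^{C}_x\tilde\omega_x+\omega^{C}_y\tilde\omega_y\bigr)=-4\partial_{\bar z}(h^{-1})\bar\phi$ is exactly the $h$-derivative term produced by $\partial_{\bar z}(\bar\phi/h)$ in the zeroth-order part of $D^1_\Theta$, with the sign needed for cancellation in $E$; what survives is $\pm\tfrac2h(\partial_xa+\partial_yb)$ on the two bundles together with the conformal-factor term $\tfrac1{2h}(\partial_x^2+\partial_y^2)\ln h=-\tau/2$ on $\Lambda^{0,1}$, and the supertrace gives $\tfrac1{4\pi}\bigl(\tfrac\tau2-4\delta(\Re\Theta)\bigr)$ as claimed. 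One small slip: your displayed residue $-4h^{-1}(\partial_xa+\partial_yb)$ is inconsistent with your own formula $\delta(f\,dx+g\,dy)=-h^{-1}(\partial_xf+\partial_yg)$; the residue is $+4h^{-1}(\partial_xa+\partial_yb)=-4\delta(\Re\Theta)$, which is the value you in fact use in the final assembly, so only the intermediate sign needs fixing. Your route differs from the paper's in how the curved case is handled. The paper computes $E$ and the supertrace only for the flat torus ($h\equiv1$), then argues on a general Riemann surface that in holomorphic (K\"ahler) normal coordinates the $1$-jets of the metric vanish at the point, so no cross terms can occur and $a_2(x,\mathcal{C}_\Theta)=-\delta(\Re\Theta)/\pi+c\,\tau$ for a universal constant $c$, which is then pinned down as $1/(8\pi)$ by integrating over $S^2$ and using that the index there is $1$ (equivalently Theorem~\ref{T1.8}). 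You instead carry out the full computation with an arbitrary conformal factor in isothermal coordinates, which forces you to verify precisely the cross-term cancellation that the paper's normal-coordinate argument sidesteps. Your version is self-contained (no appeal to the $S^2$ normalization or to Patodi's K\"ahler result beyond the unperturbed Weitzenb\"ock term) and makes the conformal covariance of the $\delta(\Re\Theta)$ term explicit; the paper's version trades that bookkeeping for a short invariance argument plus an index-theoretic normalization. Both rest on the same machinery, Lemma~\ref{L3.1} and Lemma~\ref{L3.3}.
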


We plan to examine the deformed Dolbeault complex more generally in the K\"ahler context in further detail in
a subsequent paper and identify more precisely the local index density for the Witten deformation in arbitrary dimensions.

\subsection{The signature complex}
Let $M$ be an oriented $4k$-dimensional manifold. Let
$d+\delta:C^\infty(\Lambda^\pm(M))\rightarrow C^\infty(\Lambda^\mp(M))$ be the Hirzebruch
signature complex. We then have $d_\Theta +\delta_\Theta =d+\delta+(\pext+\pint)(\Theta )$.
Now $(\pext-\pint)(\Theta ):\Lambda^\pm\rightarrow\Lambda^\mp$ but
$(\pext+\pint)(\Theta )$ does not have this property if $\Theta\ne0$. So $d_\Theta +\delta_\Theta $ does not
induce a map on the signature complex; it is not possible to deform the signature complex in this fashion. Similarly the spin
complex can not be deformed in this fashion. The de Rham and Dolbeault complexes are $\mathbb{Z}$ graded and this
seems to be at
the essence of the Witten deformation; the signature and spin complexes, on the other hand, are $\mathbb{Z}_2$ graded and
this makes all the difference.

\subsection{Heat trace asymptotics for manifolds with boundary}
Let $D$ be an operator of Laplace type acting on $C^\infty(V)$ over a compact Riemannian
manifold with boundary. We assume the boundary $\partial M$ to be non-empty; we must impose suitable boundary
conditions to ensure $D$ is elliptic. There is a natural connection $\nabla$ induced by $D$ on $V$ 
(see Lemma~\ref{L3.1} below). Let $\vec\nu$ be the inward geodesic unit normal near the boundary. 
We assume given an orthogonal direct sum
decomposition $V=V_D\oplus V_N$  of two smooth complementary vector bundles over $\partial M$. Denote the corresponding
orthogonal projections by $\pi_D$ and $\pi_N$. Assume given a smooth endomorphism
$S$ of $V_N$ over $\partial M$. If $\phi\in C^\infty(V)$, set
\begin{equation}\label{E1.h}
\mathcal{B}\phi:=\left.\left\{\vphantom{\vrule height 10pt}\pi_D\phi\oplus\pi_N(\nabla_{\vec\nu}\phi+S\phi)\right\}\right|_{\partial M}\,.
\end{equation}
In other words, we take Dirichlet boundary conditions on $V_D$ and Robin boundary conditions on $V_N$.
We let
$$
\operatorname{Domain}(D,\mathcal{B}):=\{\phi\in C^\infty(V):\mathcal{B}\phi=0\}\,.
$$
We refer to Greiner~\cite{Gr71} and to Seeley~\cite{S69} for the proof of the following result which
extends Theorem~\ref{T1.1} to the setting at hand. The interior invariants $a_n(x,D)$ agree with
those of Theorem~\ref{T1.1} and do not reflect the boundary condition; 
the boundary invariants $a_\ell^{\operatorname{bd}}(y,D,\mathcal{B})$ which are defined for $y\in\partial M$ are new. Again,
we will discuss these invariants in more detail in Section~\ref{S2}. We let $dy$ denote
the Riemannian measure of the boundary.

\begin{theorem}\label{T1.10}
Let $D$ be an operator of Laplace type over a compact smooth Riemannian manifold with smooth
boundary. Let $\mathcal{B}$ be the boundary conditions of Equation~(\ref{E1.h}). The heat operator
$e^{-tD_{\mathcal{B}}}$ is of trace class. There exist local invariants $a_\ell^{\operatorname{bd}}(y,D,\mathcal{B})$
defined on the boundary so there is a complete asymptotic series as $t\downarrow0$ of the form 
$$
\operatorname{Tr}\{e^{-tD_{\mathcal{B}}}\}\sim\sum_{n=0}^\infty t^{(n-m)/2}\int_Ma_n(x,D)dx
+\sum_{\ell=0}^\infty t^{(\ell-(m-1))/2}\int_{\partial M}a_\ell^{\operatorname{bd}}(y,D,\mathcal{B})dy\,.
$$
\end{theorem}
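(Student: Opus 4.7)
The plan is to construct an approximate heat kernel (parametrix) $K_N$ to a prescribed order $N$ in $t$ and show that it differs from the true heat kernel by a smoothing remainder which vanishes to arbitrarily high order as $t\downarrow 0$. Trace class is standard: since $D_{\mathcal{B}}$ is an elliptic boundary value problem on a compact manifold it has discrete spectrum and compact resolvent, hence $e^{-tD_{\mathcal{B}}}$ is of trace class for $t>0$. Away from the boundary one uses the classical Minakshisundaram--Pleijel construction, extended to vector bundles: in geodesic normal coordinates based at $x_0$, write
$$
K_N^{\operatorname{int}}(t,x,y) = (4\pi t)^{-m/2}e^{-d(x,y)^2/4t}\sum_{j=0}^N t^j \Phi_j(x,y),
$$
where the bundle homomorphisms $\Phi_j \in \operatorname{Hom}(V_y,V_x)$ are determined recursively by the transport equations that arise from the requirement $(\partial_t + D_x) K_N^{\operatorname{int}} = O(t^{N-m/2})$.

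Near $\partial M$ one uses the tubular neighborhood coordinates $(y,r)$ with $r = d(\cdot,\partial M)$, so that $\partial_r$ is the inward geodesic normal $\vec\nu$. Freezing coefficients at a boundary point $y_0$ gives a constant-coefficient Laplace-type operator on the half-space $\{r \geq 0\}$; its heat kernel obeying the mixed boundary condition of Equation~(\ref{E1.h}) can be written down explicitly. On the Dirichlet summand $V_D$ one uses the reflection formula
$$
K_D(t,r,r') = K_{\operatorname{free}}(t,r-r') - K_{\operatorname{free}}(t,r+r'),
$$
and on the Robin summand $V_N$ one uses a Laplace-transform representation involving the endomorphism $S$, giving a boundary correction of the form $2\int_0^\infty e^{-\sigma S}\,S\,K_{\operatorname{free}}(t,r+r'+\sigma)\,d\sigma$ added to the reflected kernel. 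Higher-order boundary corrections are generated by an analogue of the transport-equation iteration, accounting for the variable coefficients of $D$, the Weingarten map of $\partial M$, and the mixing of the splitting $V = V_D \oplus V_N$ with the connection $\nabla$.

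The global parametrix is assembled from a partition of unity that glues the interior and boundary parametrices. Duhamel's principle, written as
$$
e^{-tD_{\mathcal{B}}} = K_N - \int_0^t e^{-(t-s)D_{\mathcal{B}}}\big((\partial_s+D)K_N\big)(s,\cdot)\,ds,
$$
together with standard parabolic regularity, shows that the remainder has arbitrarily many derivatives bounded by arbitrarily high powers of $t$ once $N$ is chosen large. Evaluating on the diagonal and taking the fiberwise trace produces the claimed expansion: the factor $(4\pi t)^{-m/2}$ from the interior kernel accounts for the exponents $t^{(n-m)/2}$; integrating the boundary correction in $r$ against a Gaussian of width $\sqrt{t}$ produces the extra factor $\sqrt{t}$ relative to the interior, yielding boundary contributions $t^{(\ell-(m-1))/2}$. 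Odd-index interior invariants vanish because the Euclidean Gaussian is even in $x-y$, while no such parity is present in the $r$-integration, so both integer and half-integer powers of $t$ appear from the boundary.

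The main obstacle is the boundary parametrix in the presence of a non-trivial Robin endomorphism $S$ mixed with Dirichlet on a complementary subbundle. The coupling between $S$, the Weingarten map of $\partial M$, and the variable-coefficient symbol of $D$ must be unwound through a Volterra-type iteration on the half-line that meshes consistently with the interior transport equations along the gluing annulus. One must also verify that the resulting coefficients $a_\ell^{\operatorname{bd}}(y,D,\mathcal{B})$ are genuinely local on $\partial M$ and polynomial in the jets at $y$ of the metric, the induced connection $\nabla$, the endomorphism $S$, and the splitting $V|_{\partial M} = V_D \oplus V_N$. This verification is classical and is carried out in the references of Greiner and of Seeley cited above.
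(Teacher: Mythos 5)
Your proposal is correct in outline and is essentially the argument the paper relies on: the paper offers no proof of Theorem~\ref{T1.10} itself, referring instead to Greiner~\cite{Gr71} and Seeley~\cite{S69}, and your interior Minakshisundaram--Pleijel parametrix plus half-space reflection/Robin boundary corrections glued by a partition of unity and controlled via Duhamel is precisely the Greiner-style parabolic parametrix construction (Seeley instead works through the resolvent). The technical points you flag --- the Volterra iteration near the boundary and the locality/polynomiality of the $a_\ell^{\operatorname{bd}}$ --- are exactly what those references supply, so deferring them there matches the paper's own treatment.
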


\subsection{The Chern-Gauss-Bonnet theorem for manifolds with boundary} 
Let $M$ be a compact Riemannian manifold with smooth boundary $\partial M$.
Let $\{e_1,\dots,e_m\}$ be a local orthonormal frame for
$TM|_{\partial M}$ so that $e_m$ is the inward unit normal. We define the {\it second fundamental form} by setting
$$
L_{ab}:=(\nabla_{e_a}e_b,e_m)\text{ for }1\le a,b\le m-1\,.
$$
We have
$\displaystyle
\operatorname{vol}(S^{2j-1})=\displaystyle\frac{2\pi^j}{(j-1)!}$ and $\displaystyle
\operatorname{vol}(S^{2j})=\frac{j!\pi^j2^{2j+1}}{(2j)!}$.
Let indices $a_i$ and $b_j$ range from $1$ thru $m-1$. For $0\le 2k\le m-1$, set 
\begin{equation}\label{E1.i}\begin{array}{l}
\displaystyle Q_{k,m}(y,g):=\frac{g(e_{a_1}\wedge\dots\wedge e_{a_{m-1}},e_{b_1}\wedge\dots\wedge e_{b_{m-1}})}{(-8\pi)^k k!(m-1-2k)!\operatorname{vol}(S^{m-1-2k})}\\
\qquad\qquad\qquad\qquad\qquad\times R(a_1,a_2,b_1,b_2)\cdots R(a_{2k-1},a_{2k},b_{2k-1},b_{2k})\\[0.05in]
\qquad\qquad\qquad\qquad\qquad\times
L(a_{2k+1},b_{2k+1})\cdots L(a_{m-1},b_{m-1})(y)\,,
\end{array}\end{equation}
where we do not have any $R$ terms if $2k=0$ and we do not have any $L$ terms if $2k=m-1$.
The Chern-Gauss-Bonnet Theorem~\cite{C44} for manifolds with boundary then becomes:

\begin{theorem}\label{T1.11}
Let $M$ be a compact Riemannian manifold of dimension $m$ with smooth boundary $\partial M$.
Then
$$
\chi(M)=\int_M\mathcal{E}_m(x,g)dx+\int_{\partial M}\sum_{0\le 2k\le m-1}Q_{k,m}(y,g)dy\,.
$$
\end{theorem}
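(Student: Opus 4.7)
Proof plan: I would apply the heat-equation method to the de~Rham complex on $M$ equipped with absolute boundary conditions $\mathcal{B}_a$. Concretely, decompose a form near $\partial M$ as $\omega=\omega_{\tan}+\vec\nu^\flat\wedge\omega_{\operatorname{nor}}$; the absolute conditions set $\omega_{\operatorname{nor}}|_{\partial M}=0$ and $(\iota_{\vec\nu}d\omega)|_{\partial M}=0$, which fit the mixed framework of Equation~(\ref{E1.h}) with $V_D$ the normal part and $V_N$ the tangential part of $\Lambda^p$. Hodge theory for manifolds with boundary identifies the kernel on $\Lambda^p$ with $H^p(M;\mathbb{R})$, so Bott's cancellation argument (as in Equation~(\ref{E1.d})) gives that $\sum_p(-1)^p\operatorname{Tr}(e^{-t\Delta^p_{\mathcal{B}_a}})=\chi(M)$ is independent of $t$.

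Applying Theorem~\ref{T1.10} to each $\Delta^p$, taking the alternating sum, and matching the $t^0$ coefficient of the asymptotic expansion yields
$$\chi(M)=\int_M a_m(x,\mathcal{D}(\mathcal{M}))\,dx+\int_{\partial M}a_{m-1}^{\operatorname{bd}}(y,\mathcal{D}(\mathcal{M}),\mathcal{B}_a)\,dy,$$
while the coefficients of $t^{(n-m)/2}$ for $n<m$ must all vanish. Theorem~\ref{T1.3} identifies the interior density with $\mathcal{E}_m(x,g)$, reducing the problem to identifying the boundary density $a_{m-1}^{\operatorname{bd}}(y,\mathcal{D}(\mathcal{M}),\mathcal{B}_a)$ with $\sum_{0\le 2k\le m-1}Q_{k,m}(y,g)$.

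This identification is carried out by invariance theory. The boundary invariant must be a universal polynomial, of weighted homogeneity $m-1$, in the ambient curvature components $R_{ijkl}$, the second fundamental form $L_{ab}$, and their covariant derivatives along $\partial M$. After accounting for $O(m-1)$-invariance and the super-trace cancellations available on the de~Rham complex, all covariant derivative terms drop out and the invariant is forced into the structural form $\sum_k c_k\,\sigma(A,B)\,R\cdots R\,L\cdots L$ appearing in~(\ref{E1.i}). The constants $c_k$ are then pinned down by functoriality under isometric products $\mathcal{M}_1\times\mathcal{M}_2$ (the de~Rham index splits multiplicatively) and by evaluation on model manifolds — typically hemispheres $S^{m-1-2k}_+\times\mathbb{T}^{2k}$ or products of half-disks with flat tori — chosen so that all but one value of $k$ contributes; the factor $\operatorname{vol}(S^{m-1-2k})$ in the denominator of $Q_{k,m}$ emerges precisely from the volume of the corresponding model.

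The main obstacle is this last step: pinning down the normalizing constants $c_k$. Writing down the general form of the boundary invariant is routine once the boundary invariance theory is in place, but teasing apart the individual $Q_{k,m}$ contributions requires a carefully chosen family of test geometries, one for each $k$, on which $\chi$ and $\mathcal{E}_m$ are computable so that the surviving $c_k$ can be read off. One can either proceed by Gilkey's double induction on $m$ and $k$ using products of spheres and disks, or equivalently invoke Chern's transgression of the Euler form on the unit sphere bundle of $T\partial M$ and integrate over the fibre; either route recovers the explicit expression~(\ref{E1.i}).
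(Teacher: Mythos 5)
Your plan is sound, but it follows a genuinely different route from the paper: the paper offers no proof of Theorem~\ref{T1.11} at all, quoting it as Chern's classical result~\cite{C44} (proved by transgressing the Euler form over the sphere bundle), and then runs the logic in the opposite direction --- it uses Theorem~\ref{T1.11} together with the boundary invariance theory (Lemmas~\ref{L4.6} and~\ref{L4.7}) to identify the heat-trace boundary density, i.e.\ to obtain Theorem~\ref{T1.13} and its twisted analogue. What you propose is instead the analytic proof of the boundary Chern--Gauss--Bonnet theorem itself, in the style of Gilkey~\cite{G75,G95}: absolute boundary conditions fit the mixed scheme of Equation~(\ref{E1.h}), Hodge theory plus Bott cancellation make the alternating heat trace equal to $\chi(M)$ for all $t$, Theorem~\ref{T1.10} and Theorem~\ref{T1.3} reduce everything to the boundary density, and invariance theory plus model geometries pin it down to $\sum_kQ_{k,m}$. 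Two remarks. First, since in the paper Theorem~\ref{T1.13} is \emph{deduced from} Theorem~\ref{T1.11}, you cannot shortcut your last step by citing that identification --- that would be circular; your decision to fix the constants $c_k$ independently, by evaluation on products of hemispheres or disks with flat tori (or by Chern's fibre integration), is exactly what keeps the argument honest, and it is also where essentially all the work lies: the reduction to the structural form of Equation~(\ref{E1.i}) is not merely ``routine super-trace cancellation'' but requires the restriction-map/kernel argument of Lemma~\ref{L4.6} (product with a flat circle annihilates the alternating boundary density, and the kernel of $r$ in weight $m-1$ is spanned by the $Q_{k,m}$). Second, what each approach buys: the paper's citation-based route is shorter and lets the classical theorem normalize the universal constants for free, while your analytic route is self-contained and simultaneously yields the stronger local statement (the pointwise identification of the boundary index density), from which Theorem~\ref{T1.11} and its twisted extensions follow at once.
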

In low dimensions, this takes the form:
\medbreak\qquad
$\displaystyle \chi (M^2)=\frac1{4 \pi } \int_{M^2 }\tau dx+\frac1{2 \pi }\int_{ \partial M}L_{aa}dy$,
\medbreak\qquad
$\displaystyle \chi (M^{3})=\frac1{8 \pi}\int_{ \partial M^{3}}(R_{a_1 a_2 a_2 a_1}
+L_{a_1 a_1 }L_{a_2a_2 }-L_{a_1 a_2 }L_{a_1 a_2 })dy$,
\medbreak\qquad
$\displaystyle \chi (M^4)=\frac1{32 \pi^2}\int_{M^4 }( \tau^2-4\| \rho\|^2+\| R\|^2)dx$
\medbreak\qquad\qquad\qquad
$\displaystyle+\frac1{24 \pi^2}\int_{\partial M^4 }\{3 \tau L_{aa}+6R_{am am}L_{bb}+6R_{ac bc}L_{ab}$
\smallbreak\qquad\qquad\qquad\qquad\qquad\qquad
$+2L_{aa}L_{bb}L_{cc}-6L_{ab}L_{ab}L_{cc}+4L_{ab}L_{bc}L_{ac}\}dy$.

\subsection{The de Rham complex for manifolds with boundary}
Let $M$ be a compact Riemannian manifold with smooth boundary. Let $\vec\nu$ be the inward unit geodesic
normal defined near the boundary; let $\vec\nu^\star$ be the associated dual 1-form. Decompose
$$
\Lambda(T^*M)|_{\partial M}=\Lambda(T^*\partial M)\oplus\vec\nu^*\wedge\Lambda(T^*\partial M)\,.
$$
Let $\pi_1$ be orthogonal projection on $\Lambda(T^*\partial M)$ and $\pi_2$ be orthogonal projection on
$\vec\nu^*\wedge\Lambda(T^*\partial M)$. If $i$ is the natural inclusion of $\partial M$ into $M$, then $\pi_1\omega=i^*\omega$.
\subsection*{Relative boundary conditions} We define
$$
C^\infty_R(\Lambda M):=\{\omega\in C^\infty(\Lambda M):i^*\omega=0\}\,.
$$
Since $i^*d=di^*$, 
we have an elliptic complex
$$
\mathcal{D}(\mathcal{M})_R:=\{d:C^\infty_R(\Lambda^p(M))\rightarrow C^\infty_R(\Lambda^{p+1}(M))\}\,.
$$
As before, we define
$$
H^p(\mathcal{D}(\mathcal{M})_R):=\frac{\ker \left\{d^p:C^\infty_R(\Lambda^p(M))\rightarrow C^\infty_R(\Lambda^{p+1}(M))\right\}}
{\operatorname{image}\left\{d^{p-1}:C^\infty_R(\Lambda^{p-1}(M))\rightarrow C^\infty_R(\Lambda^p(M))\right\}}\,.
$$
The associated boundary conditions for $\Delta^p$ are of the form given in Equation~(\ref{E1.h}):
$$
\mathcal{B}_R(\omega)=i^*\omega\oplus i^*(\delta\omega)\,.
$$
Let $H^p(M,\partial M;\mathbb{R})$ be the relative cohomology groups in algebraic topology. Then
$$
H^p(\mathcal{D}(\mathcal{M})_R)=\ker(\Delta^p_{\mathcal{B}_R})=H^p(M,\partial M;\mathbb{R})\,.
$$
We will discuss this subsequently in more detail in Section~\ref{S4}. If $M$ is oriented, then let $\star$ be the Hodge
operator; $\star$ defines an isomorphism from the relative cohomology groups $H^p(M,\partial M;\mathbb{R})$ to 
the absolute cohomology groups $H^{m-p}(M;\mathbb{R})$. Thus
$$
\chi(M)=(-1)^m\chi(M,\partial M)\,.
$$

\begin{example}\rm
Let $M=[0,\pi]$. The Laplacian with relative boundary conditions defines Dirichlet boundary conditions on
$C^\infty(\Lambda^0([0,\pi]))$
and Neumann boundary conditions on $C^\infty(\Lambda^1([0,\pi]))$.
Then $\{\sin(nx)\}_{n\ge1}$ is a spectral
resolution of the Laplacian on functions and $\{\cos(nx)dx\}_{n\ge0}$ is a spectral resolution of the
Laplacian on 1-forms. Then $\ker(d)=\{0\}$ and $\operatorname{coker}(d)=\operatorname{Span}\{dx\}$. 
And these are exactly the relative cohomology groups of the interval:
$$
H^p([0,\pi],\partial[0,\pi];\mathbb{R})=\left\{\begin{array}{ll}
0&\text{if }p=0\\\mathbb{R}&\text{if }p=1\end{array}\right\}\,.$$
\end{example}

Set $a_\ell^{\operatorname{bd}}(y,\mathcal{D}(\mathcal{M})_R)=\sum_p(-1)^pa_\ell^{\operatorname{bd}}(t,\Delta^p,\mathcal{B}_R)$. One then obtains a local formula for the relative Poincar\'e characteristic:
$$
(-1)^m\chi(M)=\chi(M,\partial M)=\int_Ma_m(x,\mathcal{D}(\mathcal{M}))dx+\int_{\partial M}a_{m-1}^{\operatorname{bd}}(y,\mathcal{D}(\mathcal{M})_R)dy\,.
$$
The interior invariant $a_m(x,\mathcal{D}(\mathcal{M}))$ vanishes if $m$ is odd; the boundary invariants $a_{m-1}^{\operatorname{bd}}(y,\mathcal{D}(\mathcal{M})_R)$ are
generically non-zero. We have previously identified the invariants $a_m(x,\mathcal{D}(\mathcal{M}))$ with the geometrical invariants $\mathcal{E}_m(x,g)$
(where we set $\mathcal{E}_m=0$ if $m$ is odd).
Gilkey~\cite{G95} gave a heat equation proof of the Chern-Gauss-Bonnet Theorem by identifying the associated boundary invariants
with the boundary integrands of the Chern-Gauss-Bonnet Theorem.
\begin{theorem}\label{T1.13}
Adopt the notation given above. Then
$$a_{m-1}^{\operatorname{bd}}(y,\mathcal{D}(\mathcal{M})_R)=(-1)^m\sum_{0\le2k\le m-1}Q_{k,m}(y,g)\,.$$
\end{theorem}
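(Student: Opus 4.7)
The plan is to apply Gilkey's invariance-theoretic approach, splitting the argument into three stages: (i) extract the integrated identity from the heat equation, (ii) use invariance theory to restrict the form of the boundary density, and (iii) pin down the universal coefficients by test computations. First, combining Theorem~\ref{T1.10}, Bott's cancellation argument applied to the boundary-value problem, and the Hodge-theoretic identification $\ker(\Delta^p_{\mathcal{B}_R})=H^p(M,\partial M;\mathbb{R})$, one obtains the global identity
$$
\chi(M,\partial M)=\int_M a_m(x,\mathcal{D}(\mathcal{M}))\,dx+\int_{\partial M}a_{m-1}^{\operatorname{bd}}(y,\mathcal{D}(\mathcal{M})_R)\,dy.
$$
By Theorem~\ref{T1.3}, the interior density is $\mathcal{E}_m(x,g)$; comparing with Theorem~\ref{T1.11} and using Poincar\'e--Lefschetz duality $\chi(M,\partial M)=(-1)^m\chi(M)$ yields the integrated version of the claim. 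The problem then reduces to upgrading this integral equality on $\partial M$ to a pointwise equality of local densities.

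Second, I would apply invariance theory to fix the allowed form of $a_{m-1}^{\operatorname{bd}}(y,\mathcal{D}(\mathcal{M})_R)$. It is a universal local expression in $g$, the curvature tensor $R$, the second fundamental form $L$, and their covariant derivatives, homogeneous of weight $m-1$ under $g\mapsto c^2g$. An application of H.~Weyl's theorem on invariants of $O(m-1)$ acting in a boundary-adapted orthonormal frame, combined with the weight bound, implies that at this leading boundary order no explicit covariant derivatives of $R$ or $L$ can survive in the alternating sum over form degree. Consequently $a_{m-1}^{\operatorname{bd}}(y,\mathcal{D}(\mathcal{M})_R)$ must be a polynomial in precisely the monomials built from $R$ and $L$ appearing in $Q_{k,m}$ of Equation~(\ref{E1.i}), with universal real coefficients depending only on $m$ and $k$.

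Third, I would pin down those coefficients by evaluating on a family of test geometries. Product manifolds $M=N\times[0,\pi]$ with the product metric and relative boundary conditions on the interval factor are convenient, because the heat kernel factorizes, the one-dimensional factor is explicit from the example preceding the statement, and varying $N$ among flat tori, round spheres, and products thereof produces enough linearly independent relations to determine every coefficient; matching the resulting polynomial against the integrated identity then forces it to coincide with $(-1)^m\sum_k Q_{k,m}$. The main obstacle lies in the preceding invariance step: one must show that the tangential covariant derivatives of $R$ and $L$ which a priori could enter actually cancel in the super-trace over form degree, and that the density collapses to the Pfaffian-type structure encoded in $Q_{k,m}$. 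This requires a careful analysis of how the projections $\pi_1$ and $\pi_2$ interact with the action of $\Delta^p$ on each $\Lambda^p$ near $\partial M$. Once this structural rigidity is secured, the coefficient determination is essentially combinatorial.
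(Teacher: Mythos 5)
Your overall architecture (integrated identity, invariance theory to rigidify the local density, examples to fix universal constants) has the right shape, but the step you yourself flag as ``the main obstacle'' is precisely the content of the theorem, and the mechanism you offer for it is not valid. H.~Weyl's first theorem together with homogeneity of weight $m-1$ does not exclude covariant-derivative terms: for instance when $m=4$ the invariants $\tau_{;m}$, $R_{amam;m}$ and $L_{aa:bb}$ are $O(m-1)$-invariant of weight $3=m-1$, and neither weight counting nor the alternating sum over form degree, taken by themselves, forces their coefficients to cancel in $a_{m-1}^{\operatorname{bd}}(y,\mathcal{D}(\mathcal{M})_R)$. The mechanism the paper relies on is different: the supertraced boundary invariant vanishes on any structure which is a Riemannian product with a flat circle factor (by the product property of Assertion~6 of Lemma~\ref{L2.2} and the vanishing of the supertraced heat invariants of the circle), hence it lies in $\ker\{r:\mathcal{Q}_{m,n}^{\operatorname{bd}}\rightarrow\mathcal{Q}_{m-1,n}^{\operatorname{bd}}\}$; Lemma~\ref{L4.6} (Gilkey~\cite{G75}) then says this kernel is $\{0\}$ for $n<m-1$ and is $\operatorname{Span}\{Q_{k,m}\}$ for $n=m-1$. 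That restriction-map argument, or an equivalent one, is the missing idea; without it your step (ii) merely asserts the structural rigidity that has to be proven. (The paper itself does not reprove Theorem~\ref{T1.13} from scratch: it quotes \cite{G95} and indicates the derivation via the kernel lemma, with the factor $(-1)^m$ coming from $\chi(M,\partial M)=(-1)^m\chi(M)$ — that part of your step (i) is correct.)

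There is also a concrete defect in step (iii). Every product $N\times[0,\pi]$ has totally geodesic boundary, so $L\equiv 0$ there and every monomial $Q_{k,m}$ with $2k<m-1$ vanishes identically on your entire family of test geometries; such examples can detect at most the single coefficient with $2k=m-1$, and that only when $m$ is odd. To separate the remaining coefficients you would need boundaries with nonvanishing second fundamental form (discs, balls, hemispheres, and their products with closed factors), or you can bypass the computation altogether, as the paper does, by invoking the identification of the boundary density already established in \cite{G75,G95} once the invariance step has reduced the answer to a universal combination of the $Q_{k,m}$.
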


We have $i^*(\Theta \wedge\omega)=d(i^*h)\wedge i^*\omega$
so $d_\Theta :C^\infty_R(\Lambda^p(M))\rightarrow C^\infty_R(\Lambda^{p+1}(M))$ and
we obtain a deformed elliptic complex $\mathcal{D}(\mathcal{M})_{R,\Theta}$ with associated second order boundary conditions for the Laplacian
$$
\mathcal{B}_{R,\Theta}(\omega)=i^*\omega\oplus i^*(\delta_\Theta \omega)\,.
$$
 Theorem~\ref{T1.5} extends to
this setting to become:
\begin{theorem} Adopt the notation established above.
\begin{enumerate} 
\item $a_\ell^{\operatorname{bd}}(y,\mathcal{D}(\mathcal{M})_{R,\Theta})=0$ for $\ell<m-1$.
\item $\displaystyle a_{m-1}^{\operatorname{bd}}(y,\mathcal{D}(\mathcal{M})_{R,\Theta})=(-1)^{m}\sum_{0\le\ell\le m-1}Q_{\ell,m}(y,g)$ is independent of $\Theta$.
\end{enumerate}
\end{theorem}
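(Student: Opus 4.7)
The plan is to extend the invariance-theoretic argument underlying Theorem~\ref{T1.5} (developed in Section~\ref{S4}) to the boundary setting, in parallel with Gilkey's heat-equation proof of Theorem~\ref{T1.13}. By Theorem~\ref{T1.10} together with local invariance theory, $a_\ell^{\operatorname{bd}}(y,\mathcal{D}(\mathcal{M})_{R,\Theta})$ is a universal polynomial in the jets at $y$ of the metric, the second fundamental form $L$, the components of $\Theta$, and their covariant derivatives, contracted in an adapted orthonormal frame $(e_1,\dots,e_{m-1},\vec\nu)$. The rescaling $g\mapsto c^2g$ forces this polynomial to be weighted homogeneous of weight $\ell$, where $R$ contributes weight $2$, while $L$ and $\Theta$ each contribute weight $1$, and each covariant derivative adds $1$.

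I would then split $a_\ell^{\operatorname{bd}}=P_\ell+Q_\ell$, where $P_\ell$ collects the monomials free of $\Theta$ and $Q_\ell$ collects monomials in which $\Theta$ (or a covariant derivative thereof) appears. Setting $\Theta=0$ recovers the unperturbed complex $\mathcal{D}(\mathcal{M})_R$, so Theorem~\ref{T1.13} identifies $P_\ell$: it vanishes for $\ell<m-1$ and equals $(-1)^m\sum_{0\le 2k\le m-1}Q_{k,m}(y,g)$ for $\ell=m-1$. The substantive content of the theorem then reduces to showing $Q_\ell\equiv 0$ for all $\ell$.

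To prove $Q_\ell\equiv 0$ I would combine an integrated vanishing statement with a localization argument. Hodge theory and the invariance of the Fredholm index of an elliptic complex under zeroth-order perturbations give $\operatorname{index}(\mathcal{D}(\mathcal{M})_{R,\Theta})=(-1)^m\chi(M,\partial M)$, independent of $\Theta$; combined with the interior vanishing of Theorem~\ref{T1.5} and the asymptotic expansion of Theorem~\ref{T1.10}, this yields $\int_{\partial M}Q_\ell(y)\,dy=0$ for every compact $(M,g)$, every closed $\Theta$, and every $\ell$. Varying the metric and $\Theta$ in small bump-function patches around a fixed boundary point and polarizing then forces each surviving monomial in $Q_\ell$ to vanish pointwise, provided the relevant jet variables of $\Theta$ can be prescribed independently.

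The principal obstacle is the closedness constraint $d\Theta=0$, which couples the components of the 1-jet of $\Theta$ and obstructs fully free variation. To sidestep this, I would specialize to product collar geometries $\mathcal{M}=N\times[0,\varepsilon)$ with $\Theta=\Theta_N+\theta(t)\,dt$, where $\Theta_N$ is a closed 1-form on $N$ independent of $t$ and $\theta\in C^\infty([0,\varepsilon))$ is arbitrary; closedness of $\Theta$ on $M$ is then automatic and $\Theta_N$, $\theta$ may be varied independently. The product structure induces a K\"unneth decomposition of the heat kernel whose super-trace factors through a closed-manifold super-trace on $N$, already controlled by Theorem~\ref{T1.5}, and a one-dimensional super-trace on $[0,\varepsilon)$ with relative boundary conditions at $t=0$. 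A direct small-$t$ computation on the one-dimensional factor eliminates the $\theta$-dependence, and invariance theory then promotes the conclusion from the product case to arbitrary geometries, yielding both Assertion~(1) and Assertion~(2).
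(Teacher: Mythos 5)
Your reduction to showing that the $\Theta$-dependent part $Q_\ell$ vanishes pointwise is fine, but the mechanism you propose for proving it cannot work, and it is here that your route diverges fatally from what is actually needed. The integrated identity you derive from index invariance only says that $\int_{\partial M}Q_\ell\,dy=0$ (and even that only for $\ell\le m-1$: for $\ell>m-1$ the interior terms $\int_M a_{\ell+1}(x,\mathcal{D}_\Theta)\,dx$ no longer vanish and are themselves $\Theta$-dependent by Lemma~\ref{L1.6}, so your claim ``for every $\ell$'' is false --- indeed $Q_\ell\equiv0$ for all $\ell$ cannot be true, by analogy with Lemma~\ref{L3.4}). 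The decisive gap is the passage from integrated to pointwise vanishing: a universal local invariant whose integral vanishes for every admissible $(M,g,\Theta)$ need not vanish pointwise, because tangential divergence terms (e.g.\ anything of the form of a boundary divergence built from $g$, $L$ and $\Theta$) integrate to zero for every choice of bump-function perturbation, so ``varying $g$ and $\Theta$ in small patches and polarizing'' extracts no information about them. This is not a hypothetical loophole: the paper's own computations (Lemma~\ref{L3.4}, where $a_2=-\pi^{-1/2}\delta\Theta$ on the circle) show that exactly such $\Theta$-dependent divergence terms do occur in this family of invariants, so some argument beyond index/integration is mandatory. Your fallback --- product collars $N\times[0,\varepsilon)$ with $\Theta=\Theta_N+\theta(t)\,dt$, a K\"unneth factorization, and then ``invariance theory promotes the conclusion to arbitrary geometries'' --- does not close this gap either: vanishing on metrics that are products in the normal direction only constrains the invariant on the locus where the second fundamental form and all normal jets of the metric vanish, and says nothing about monomials involving $L_{ab}$, normal derivatives of $g$, or $\Theta_m$; the promotion step is precisely the missing content, not a routine appeal to invariance.

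What the paper does instead, and what your proposal lacks, is a dimensional-analysis argument on the space of invariants itself: one shows that $a_\ell^{\operatorname{bd}}(\cdot,\mathcal{D}(\mathcal{M})_{R,\Theta})$ lies in the kernel of the restriction map $r$ (by taking a Riemannian product with a flat circle carrying zero $1$-form in a tangential direction and using Lemma~\ref{L2.2}(6)), and then the weight-counting Lemma~\ref{L4.7} shows this kernel is $\{0\}$ for $\ell<m-1$ and is spanned by the $Q_{k,m}(y,g)$ --- in particular contains no $\Theta$-dependence --- for $\ell=m-1$; the identification at $\ell=m-1$ then follows by setting $\Theta=0$ and quoting Theorem~\ref{T1.13}, exactly as the interior case of Theorem~\ref{T1.5} follows from Lemma~\ref{L4.4} and Theorem~\ref{T1.3}. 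Your proposal never rules out divergence-type or other integrally invisible $\Theta$-terms in the critical range, so as written it does not prove either assertion; incorporating a counting argument of the type of Lemma~\ref{L4.7} (or an equivalent characterization of $\ker r$ in the presence of the $\Theta$-jets) is the essential missing step.
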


\subsection*{Absolute boundary conditions} We suppose $M$ orientable and define
$$
C^\infty_A(\Lambda M):=\{\omega\in C^\infty(\Lambda M):i^*(\star\omega)=0\}\,.
$$
Since $\delta=\pm\star d\star$, we have a dual elliptic complex
\begin{eqnarray*}
&&C^\infty_A(\Lambda M):=\{\omega\in C^\infty(\Lambda M):i^*(\star\omega)=0\},\\
&&\mathcal{D}(\mathcal{M})_A:=\{\delta:C^\infty_A(\Lambda^pM)\rightarrow C^\infty_A(\Lambda^{p-1}(M))\}\,.
\end{eqnarray*}
The cohomology groups $H^p(\mathcal{D}(\mathcal{M})_A)$ are defined analogously; set $\mathcal{B}_A(\omega):=\mathcal{B}_R(\star\omega)$.
We then have
$$
H^P(\mathcal{D}(\mathcal{M})_A)=\ker(\Delta^p_{\mathcal{B}_A})=H^p(M;\mathbb{R})\,.
$$
Theorem~\ref{T1.13} extends immediately to this situation to yield
\begin{equation}\label{E1.j}
a_{m-1}^{\operatorname{bd}}(y,\mathcal{D}(\mathcal{M})_A)=\sum_{0\le\ell\le m-1}Q_{\ell,m}(y,g)\,.
\end{equation}
We note that all the theory is local; thus the existence of a global orientation is irrelevant; absolute boundary conditions can
be defined invariantly and Equation~(\ref{E1.j}) continues to hold. We use $\delta_\Theta $ to define the Witten deformation in
this instance and the appropriate results go thru.

\subsection{Lefschetz fixed point formulas}\label{S1.12}
Let $\mathcal{M}$ be a compact Riemannian manifold without boundary. Let $\mathcal{T}:M\rightarrow M$
be a smooth map, not necessarily an isometry. The pull-back
$\mathcal{T}^*:C^\infty(\Lambda^pM)\rightarrow C^\infty(\Lambda^pM)$ satisfies
$\mathcal{T}^*\circ d=d\circ \mathcal{T}^*$. Assume $\mathcal{T}^*\Theta=\Theta$ so
$\mathcal{T}^*\circ\pext(\Theta)=\pext(\Theta)\circ \mathcal{T}^*$. Consequently, $\mathcal{T}^*$ induces a chain map of $\mathcal{D}_\Theta$ and
defines an action $\mathcal{T}^*$ on the cohomology groups $H^p(\mathcal{D}_\Theta)$ with associated
{\it Lefschetz number}
$$
\mathcal{L}(\mathcal{T},\Theta):=\sum_{p=0}^m(-1)^p\operatorname{Tr}\{\mathcal{T}^*:H^p(\mathcal{D}_\Theta)\rightarrow H^p(\mathcal{D}_\Theta)\}\,.
$$
The cancellation argument of Bott then yields
$$
\mathcal{L}(\mathcal{T},\mathcal{D}_\Theta)=\sum_{p=0}^m(-1)^p\operatorname{Tr}\{\mathcal{T}^*e^{-t\Delta_\Theta^p}\}\,.
$$

We assume that the fixed point set of $\mathcal{T}$ consists of the finite disjoint union of smooth submanifolds $N_\mu$ of dimension $m_\mu$;
this is automatic, of course, if $\mathcal{T}$ is an isometry.
Define the {\it normal bundle} $\nu(N_\mu):=\{T(M)/T(N_\mu)\}|_{N_\mu}$. Since $d\mathcal{T}$ preserves $T(N_\mu)$,
there is an induced map $d\mathcal{T}_\nu$ on the normal bundle;
we assume as a non-degeneracy condition that $\det(d\mathcal{T}_\nu-\id)\ne0$, i.e. that there are no infinitesimal directions left
fixed by $d\mathcal{T}_\nu$. We note that if $\mathcal{T}$ is an isometry, this non-degeneracy condition is automatically satisfied. The
following result follows from results proved independently by
P. Gilkey~\cite{G79a} and S. C. Lee~\cite{L76}.

\begin{lemma} Assume that $\mathcal{T}$ is non-degenerate and that $\mathcal{T}^*\Theta=\Theta$. There exist local invariants 
$a_n(x_\mu,\mathcal{T},\mathcal{D}_\Theta)$, which are called the local Lefschetz density, so that 
so that:
$$
\mathcal{L}(\mathcal{T},\mathcal{D}_\Theta)\sim\sum_\mu\sum_{n=0}^\infty t^{(n-m_\mu)/2}\int_{N_\mu}a_{n,m_\mu}(x_\mu,\mathcal{T},\mathcal{D}_\Theta)dx_\mu\,.
$$
\end{lemma}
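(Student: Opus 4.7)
The plan is to reduce the claim to the heat trace Lefschetz asymptotics of Gilkey~\cite{G79a} and Lee~\cite{L76} for a single Laplace type operator together with a geometric endomorphism with non-degenerate fixed submanifolds, applied termwise to the complex and assembled by an alternating sum.

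Since $\mathcal{T}^*\Theta=\Theta$ one has $\mathcal{T}^*\circ\pext(\Theta)=\pext(\Theta)\circ\mathcal{T}^*$, which combined with $\mathcal{T}^*d=d\mathcal{T}^*$ yields $\mathcal{T}^*d_\Theta=d_\Theta\mathcal{T}^*$; hence $\mathcal{T}^*$ is a chain map of $\mathcal{D}(\mathcal{M})_\Theta$. Because $\mathcal{T}$ need not be an isometry, $\mathcal{T}^*$ in general fails to commute with $\delta_\Theta$ and therefore with $\Delta_\Theta^p$. Nevertheless, the standard Bott-type cancellation, which uses only $[d_\Theta,\Delta_\Theta]=0$, cyclicity of the trace, and $\mathcal{T}^*d_\Theta=d_\Theta\mathcal{T}^*$, shows that
$$\frac{d}{dt}\sum_{p=0}^m(-1)^p\operatorname{Tr}\{\mathcal{T}^*e^{-t\Delta_\Theta^p}\}=0,$$
so the alternating sum is $t$-independent and, by the $t\to\infty$ limit onto harmonic representatives, equals $\mathcal{L}(\mathcal{T},\mathcal{D}_\Theta)$. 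It therefore suffices to expand each $\operatorname{Tr}\{\mathcal{T}^*e^{-t\Delta_\Theta^p}\}$ individually as $t\downarrow 0$.

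For this I would invoke Gilkey--Lee directly. The operators $\Delta_\Theta^p$ are of Laplace type, and $\mathcal{T}^*$ on $C^\infty(\Lambda^p(M))$ is the geometric endomorphism built from $\mathcal{T}$ and the bundle homomorphism $\Lambda^p(d\mathcal{T})^*$. The non-degeneracy condition $\det(d\mathcal{T}_\nu-\id)\ne 0$ on each normal bundle is precisely what their theorem requires in order to produce a complete asymptotic expansion
$$\operatorname{Tr}\{\mathcal{T}^*e^{-t\Delta_\Theta^p}\}\sim\sum_\mu\sum_{n=0}^\infty t^{(n-m_\mu)/2}\int_{N_\mu}a_{n,m_\mu}(x_\mu,\mathcal{T},\Delta_\Theta^p)\,dx_\mu,$$
in which the integrand is a universal polynomial in the jets at $x_\mu$ of the metric, of the coefficients $A^k$ and $B$ of Equation~(\ref{E1.a}) associated with $\Delta_\Theta^p$, and of $\mathcal{T}$; all dependence on $\Theta$ enters through these coefficients. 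Setting $a_{n,m_\mu}(x_\mu,\mathcal{T},\mathcal{D}_\Theta):=\sum_{p=0}^m(-1)^pa_{n,m_\mu}(x_\mu,\mathcal{T},\Delta_\Theta^p)$ and inserting this into the alternating sum of the expansions above gives, by the $t$-independence established in the previous step, the claimed asymptotic series for $\mathcal{L}(\mathcal{T},\mathcal{D}_\Theta)$.

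The only real obstacle is to verify that the Gilkey--Lee construction applies verbatim to the twisted operator $\Delta_\Theta$ rather than to an unperturbed generalized Laplacian. This is essentially automatic: their analysis relies only on the Seeley parametrix expansion of the heat kernel of an arbitrary operator of Laplace type together with a stationary phase argument for the off-diagonal kernel along the graph of $\mathcal{T}$ near each fixed submanifold $N_\mu$. The $1$-form $\Theta$ enters $\Delta_\Theta^p$ only through the first and zeroth order coefficients $A^k$ and $B$ in (\ref{E1.a}); these are absorbed into the Seeley--de~Witt coefficients in the usual fashion and are then transported into the Lefschetz densities without any modification of the underlying argument.
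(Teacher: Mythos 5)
Your proposal is correct and follows essentially the same route as the paper: the paper obtains this lemma by citing the general Lefschetz heat-trace asymptotics of Gilkey~\cite{G79a} and Lee~\cite{L76} for an operator of Laplace type with a non-degenerate geometric endomorphism, applied to each $\Delta_\Theta^p$ (which is Laplace type since $\Theta$ only perturbs lower order terms), and then takes the alternating sum, with the Bott cancellation identifying the $t$-independent sum with $\mathcal{L}(\mathcal{T},\mathcal{D}_\Theta)$ exactly as you describe.
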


We define $\nu(N)$ abstractly as the quotient of $T(M)|_N$ by the subbundle $T(N)$. Let $d\mathcal{T}:TM|_N\rightarrow TM|_N$ be the tangent map. 
Since $\mathcal{T}$ preserves $N$, it preserves $T(N)$ and thus there is an induced map $d\mathcal{T}_\nu$ of $\nu(N)$ to itself.
We have assumed that $\det(d\mathcal{T}_\nu-\id)\ne0$. Thus we may identify $\nu(N)$ with the span of the generalized eigenvectors of $d\mathcal{T}$ 
corresponding to eigenvalues other than $+1$ to regard $\nu(N)$ as a subbundle of $T(M)|_N$. This defines a canonical decomposition 
\begin{equation}\label{E1.k}
TM|_N=TN\oplus\nu(N)\,.
\end{equation}
Gilkey and Lee generalized Theorem~\ref{T1.3} to setting as follows:
\begin{theorem}\label{T1.16} Assume that $\mathcal{T}:M\rightarrow M$ is non-degenerate, that the submanifolds
$N_\mu$ are totally geodesic, and that the decomposition of Equation~(\ref{E1.k}) is orthogonal.
Then
\begin{enumerate}
\item $a_{n,m_\mu}(x_\mu,\mathcal{T},\mathcal{D})=0$ on $N_\mu$ if $n<m_\mu$.
\item $a_{m_\mu,m_\mu}(x_\mu,\mathcal{T},\mathcal{D})=\operatorname{sign}(\det(\id-d\mathcal{T}_\nu))\mathcal{E}_{m_\mu}(x,g_\mu)$.
\end{enumerate}
\end{theorem}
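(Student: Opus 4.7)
The plan is to combine invariance theory with a product decomposition and Patodi's Theorem~\ref{T1.3}. By the standard equivariant heat-kernel construction of~\cite{G79a,L76}, the local Lefschetz density $a_{n,m_\mu}(x_\mu,\mathcal{T},\mathcal{D})$ at a point $x_\mu\in N_\mu$ is a universal polynomial in the jets of the metric, the jets of $\mathcal{T}$, and the endomorphism $d\mathcal{T}_\nu$, all evaluated at $x_\mu$. Under our hypotheses the second fundamental form of $N_\mu$ vanishes, the Levi-Civita connection preserves the splitting $TM|_{N_\mu}=TN_\mu\oplus\nu(N_\mu)$, and $d\mathcal{T}$ is block diagonal with $\id$ on $TN_\mu$ and $d\mathcal{T}_\nu$ on the normal factor. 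The usual orthogonal (Weyl) invariance then expresses $a_{n,m_\mu}$ as a polynomial in the curvature of $M$ restricted to $N_\mu$, its covariant derivatives, and the matrix $d\mathcal{T}_\nu$; the rescaling $g\mapsto c^2g$ forces homogeneity of weight $n$ in the curvature variables, with $d\mathcal{T}_\nu$ carrying weight $0$.

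The key computation is on the product model $M=N\times F$ in which $N=N_\mu$ carries an arbitrary metric $g_N$, $F=\mathbb{R}^{m-m_\mu}$ is flat, and $\mathcal{T}=\id_N\times T_\nu$ for a linear $T_\nu$ with $\det(\id-T_\nu)\ne0$. The de~Rham complex factors as an external tensor product, the Laplacian decouples, and Bott's cancellation yields
\[
\sum_p(-1)^p\operatorname{Tr}\{\mathcal{T}^*e^{-t\Delta^p_M}\}
=\Big(\sum_p(-1)^p\operatorname{Tr}\{e^{-t\Delta^p_N}\}\Big)\cdot\Big(\sum_q(-1)^q\operatorname{Tr}\{T_\nu^*e^{-t\Delta^q_F}\}\Big).
\]
For a linear automorphism of flat Euclidean space with no $+1$ eigenvalue, an explicit Gaussian (Mehler-type) computation gives the second factor equal to $\operatorname{sign}\det(\id-T_\nu)$, independent of $t$. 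Extracting the $t^{(n-m_\mu)/2}$-coefficient and invoking Theorem~\ref{T1.3} for the $N$-factor yields $a_{n,m_\mu}=0$ for $n<m_\mu$ and $a_{m_\mu,m_\mu}(x,\mathcal{T},\mathcal{D})=\operatorname{sign}\det(\id-T_\nu)\,\mathcal{E}_{m_\mu}(x,g_N)$, verifying both assertions on the product model.

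To finish, one must show the universal polynomial produced by invariance theory is determined by its values on these product models. Because $N_\mu$ is totally geodesic with orthogonal normal bundle, the orthogonal structure group at $x_\mu$ reduces to $O(m_\mu)\times O(m-m_\mu)$, and the admissible curvature monomials split accordingly: tangential curvatures of $N_\mu$, normal curvatures of $\nu(N_\mu)$, and mixed curvatures $R(X,Y,Z,W)$ with $X,Y\in TN_\mu$, $Z,W\in\nu(N_\mu)$. The main obstacle is ruling out a contribution from these mixed curvatures, which are not killed by the totally-geodesic and orthogonal hypotheses. I would handle this by the standard Gilkey restriction lemma: first show that tangential and normal curvature data can be realized independently in product geometries, so the polynomial is already pinned down on the pure-tangential and pure-normal sub-algebras by the product computation; then use the weight constraint $n\le m_\mu$ together with the requirement that $a_{n,m_\mu}$ produce a top form on $N_\mu$ to exhaust the remaining degrees of freedom. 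A mixed curvature term paired with any nontrivial $d\mathcal{T}_\nu$-invariant would force a strictly smaller tangential weight and could not fill out a top form on $N_\mu$, so no such term can appear, and the product formula of the preceding paragraph gives the full statement.
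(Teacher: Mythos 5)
A preliminary remark: the paper does not actually prove Theorem~\ref{T1.16}; it quotes it from Gilkey~\cite{G79a} and Lee~\cite{L76}, and its own new content is the passage to Theorem~\ref{T1.17} via Lemma~\ref{L4.10}. Your first two paragraphs follow the same strategy as those sources: invariance theory plus the product model $N\times\mathbb{R}^{m-m_\mu}$ with a linear normal map $T_\nu$, where the Gaussian computation giving $\operatorname{sign}\det(\id-T_\nu)$ and the appeal to Patodi's local result on the $N$-factor are correct (modulo the routine point that the normal factor is noncompact, so one should invoke locality of the heat expansion or work with the algebraic restriction map instead).

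The genuine gap is in your third paragraph: the reduction from arbitrary geometries to product models is not valid as you state it. Knowing the universal polynomial on all Riemannian products $N\times F$ does not determine it, because every mixed-curvature contraction such as $\sum_{a,u,v}R_{auav}\,W_{uv}(d\mathcal{T}_\nu)$ (weight $2$, hence admissible at the critical weight when $m_\mu=2$, with higher-weight analogues in all dimensions) vanishes identically on product metrics yet is a nonzero invariant of the reduced structure group; so being ``pinned down on the pure-tangential and pure-normal sub-algebras'' excludes nothing, and your proposed exclusion --- ``the requirement that $a_{n,m_\mu}$ produce a top form on $N_\mu$'' --- is circular, since that the density is a multiple of the Euler form is precisely the conclusion to be proved. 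What is needed (and is exactly the mechanism the paper uses in Section~\ref{S4} and the final section) is: (i) show that the local Lefschetz density lies in the kernel of the \emph{tangential} restriction map $r$, by crossing with a flat circle in the tangential direction and using multiplicativity of the densities as in Lemma~\ref{L2.2}~(6) together with the identical vanishing of the supertraced density of the flat circle; (ii) run the degree count of Lemmas~\ref{L4.9} and~\ref{L4.10}: $r(P)=0$ forces $\deg_a\ge2$ for every tangential index, hence $2m_\mu\le 2\operatorname{order}(P)$, which gives Assertion~(1), and at $n=m_\mu$ eliminates all monomials containing normal or mixed metric jets, jets of $\mathcal{T}$ beyond $d\mathcal{T}_\nu$, and (under the totally geodesic hypothesis) the second fundamental form; (iii) apply Lemma~\ref{L4.2} to conclude the critical term is $f(d\mathcal{T}_\nu)\,\mathcal{E}_{m_\mu}$. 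Only then does your product/Gaussian computation legitimately enter, to evaluate $f=\operatorname{sign}\det(\id-d\mathcal{T}_\nu)$; with steps (i)--(iii) supplied your argument closes, but without them it does not.
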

This gives a heat equation proof of the classical formula
$$
\mathcal{L}(\mathcal{T},\mathcal{D})=\sum_\mu\operatorname{sign}(\det(\id-d\mathcal{T}_\nu))\chi(N_\nu)\,.$$

In fact, the assumption that the submanifolds $N_\mu$ are totally geodesic
in Theorem~\ref{T1.16}  is unnecessary as the following more general result shows.
\begin{theorem}\label{T1.17} Assume that $\mathcal{T}:M\rightarrow M$ is non-degenerate,  
that the decomposition of Equation~(\ref{E1.k}) is orthogonal, and that $\mathcal{T}^*\Theta=\Theta$.
Then
\begin{enumerate}
\item $a_{n,m_\mu}(x_\mu,\mathcal{T},\mathcal{D}_\Theta)=0$ on $N_\mu$ if $n<m_\mu$.
\item $a_{m_\mu,m_\mu}(x_\mu,\mathcal{T},\mathcal{D}_\Theta)=\operatorname{sign}(\det(\id-d\mathcal{T}_\nu))\mathcal{E}_{m_\mu}(x,g_\mu)$.
\end{enumerate}
\end{theorem}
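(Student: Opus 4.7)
The plan is to reduce Theorem~\ref{T1.17} to Theorem~\ref{T1.16} via two separate invariance-theoretic arguments: first, removing the dependence on $\Theta$ (using the mechanism of Theorem~\ref{T1.5}), and second, removing the totally-geodesic hypothesis on the $N_\mu$. The framework throughout is Gilkey--Lee local invariance theory applied to the alternating equivariant heat supertrace $\sum_p (-1)^p \operatorname{Tr}\{\mathcal{T}^* e^{-t\Delta_\Theta^p}\}$ restricted to a neighborhood of the fixed-point submanifold $N_\mu$.

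For Step~1, decompose $\Delta_\Theta^p = \Delta^p + E_\Theta^p$, where $E_\Theta^p$ is a zeroth-order operator built from $\pext(\Theta)$, $\pint(\Theta)$, and their covariant derivatives (no $d\Theta$ term appears because $\Theta$ is closed). Expand $e^{-t\Delta_\Theta^p}$ around $e^{-t\Delta^p}$ by Duhamel's formula; each correction term carries at least one factor of $\pext(\Theta) + \pint(\Theta)$ (or a covariant derivative thereof), and such an operator is odd with respect to the form-degree grading. Since $\sum_p (-1)^p$ realizes the supertrace with respect to the chirality operator $(-1)^p$, and odd operators have vanishing supertrace, every nontrivial $\Theta$-correction cancels. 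This is the local, equivariant version of the cancellation underlying Theorem~\ref{T1.5}, and yields $a_{n,m_\mu}(x_\mu, \mathcal{T}, \mathcal{D}_\Theta) = a_{n,m_\mu}(x_\mu, \mathcal{T}, \mathcal{D})$. It suffices to prove the assertions when $\Theta = 0$.

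For Step~2, Gilkey--Lee invariance theory expresses $a_{n,m_\mu}(x_\mu, \mathcal{T}, \mathcal{D})$ as a universal polynomial in the curvature $R$ of $M$, the second fundamental form $L$ of $N_\mu$ in $M$, their covariant derivatives, and the Jordan data of $d\mathcal{T}_\nu$. The orthogonal splitting $TM|_{N_\mu} = TN_\mu \oplus \nu(N_\mu)$ of Equation~(\ref{E1.k}) is preserved by $d\mathcal{T}$, so $\Lambda T^*M|_{N_\mu} \cong \Lambda T^*N_\mu \otimes \Lambda \nu(N_\mu)^*$, and the fiberwise supertrace factors into a tangential and a normal contribution. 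The normal factor evaluates pointwise to $\operatorname{sign}(\det(\id - d\mathcal{T}_\nu))$ by the Gilkey--Lee computation. The tangential factor is then a polynomial in $R$, $L$, and their derivatives, invariant under $O(m_\mu) \times O(m - m_\mu)$, producing a top-degree form on $N_\mu$.

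The main obstacle is excluding every $L$-containing monomial from this tangential factor. I would argue by a product reduction: on any Riemannian product $(N_\mu, g_\mu) \times (F, g_F)$ with $\mathcal{T}$ the identity on $N_\mu$ and a non-degenerate isolated fixed point on $F$, the submanifold $N_\mu$ is totally geodesic and Theorem~\ref{T1.16} applies, forcing the tangential factor at $n = m_\mu$ to equal $\mathcal{E}_{m_\mu}(x, g_\mu)$ and the lower-order invariants to vanish. Because the coefficients of the universal polynomial do not depend on the ambient geometry or on $\mathcal{T}$, a polarization argument over sufficiently many product geometries forces every monomial involving $L$ to have coefficient zero, establishing both assertions of the theorem. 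This universality-plus-polarization step is the technical heart of the proof and is analogous in spirit to the boundary-invariant identification of $Q_{k,m}$ used in proving Theorem~\ref{T1.13}.
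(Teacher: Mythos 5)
There are two genuine gaps, one in each step. Your Step~1 cannot work as stated: the perturbation $E^p_\Theta=\Delta^p_\Theta-\Delta^p$ is \emph{even} (it preserves form degree, being built from anticommutators such as $\{d,\pint(\Theta)\}+\{\delta,\pext(\Theta)\}$ plus $|\Theta|^2$), so the Duhamel correction terms are even operators and the parity/supertrace cancellation you invoke does not apply; products of an even number of odd factors have no reason to be supertraceless. Moreover, the conclusion you draw --- pointwise equality $a_{n,m_\mu}(x,\mathcal{T},\mathcal{D}_\Theta)=a_{n,m_\mu}(x,\mathcal{T},\mathcal{D})$ for \emph{all} $n$ --- is false: taking $\mathcal{T}=\id$ it would say every local heat supertrace density is independent of $\Theta$, contradicting Lemma~\ref{L1.6} and the explicit computation $a_2(x,\mathcal{D}(S^1)_\Theta)=-\pi^{-1/2}\delta\Theta$ of Lemma~\ref{L3.4} (the McKean--Singer cancellation only controls the integral, not the density). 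The $\Theta$-independence is special to orders $n\le m_\mu$ and in the paper it comes out of the weight-counting argument, not out of a soft parity argument: one shows the local Lefschetz density lies in the kernel of the restriction map (by multiplying by a flat circle), and Lemma~\ref{L4.10} then forces vanishing for $n<m_\mu$ and, at the critical order $n=m_\mu$, forces the invariant to be a polynomial in $d\mathcal{T}_\nu$, $g_{ab/cd}$, $g_{ab/u}$ alone, so the $\Theta$-jets drop out automatically.

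Your Step~2 has a parallel gap: you propose to kill the monomials containing the second fundamental form $L$ by polarizing over Riemannian products $(N_\mu,g_\mu)\times(F,g_F)$, but in every such test geometry $N_\mu$ is totally geodesic, i.e.\ $L\equiv 0$, so these examples evaluate all $L$-monomials at zero and give no information whatsoever about their coefficients. The paper's mechanism is different and essential: since $\int_{N}P\,dx$ (the contribution to the Lefschetz number) is independent of the metric subject only to the orthogonality of the splitting in Equation~(\ref{E1.k}), one perturbs the metric by a localized term $\varepsilon\,\phi(x,y)\,c\,y^u\,dx^a\odot dx^b$ (the mesa-function argument inside the proof of Lemma~\ref{L4.10}); a nonzero $L$-coefficient would make the integral vary with $\varepsilon$, a contradiction. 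Only after both reductions --- $\Theta$ eliminated by counting, $L$ eliminated by metric variation --- does one quote Theorem~\ref{T1.16} (with $\Theta=0$ and totally geodesic data) to identify the answer as $\operatorname{sign}(\det(\id-d\mathcal{T}_\nu))\mathcal{E}_{m_\mu}$. So your overall strategy (reduce to Theorem~\ref{T1.16}) matches the paper, but both of your reduction mechanisms fail and must be replaced by the restriction-map counting and the metric-perturbation argument of Lemma~\ref{L4.10}.
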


This then shows that $\mathcal{L}(\mathcal{T},\mathcal{D}_\Theta)$ is independent of $\Theta$; this is perhaps
by no means obvious because the cohomology groups $H^p(\mathcal{D}_\Theta)$ need not be independent of $\Theta$.

\section{The cohomology groups of the perturbed de Rham complex}\label{S2}
In this section, we establish the basic properties of the cohomology groups $\mathcal{D}(\mathcal{M})_\Theta$ by
proving Lemma~\ref{L1.4}.

\subsection{The proof of Lemma~\ref{L1.4}~(1): Cohomological invariance}
Let $\mathcal{M}$ be a compact Riemannian manifold without boundary of dimension $m$. Suppose that 
$[\Theta_1]=[\Theta_2]$ in $H^1(M;\mathbb{R})$, i.e. that
$\Theta_2-\Theta_1=dh$ for some $h\in C^\infty(M)$. Let $\Psi_h(\omega):=e^{h}\omega$. Then
$$
d_{\Theta_1}(\Psi_h(\omega))=d(e^{h}\omega)+e^{h}\Theta_1\wedge\omega=e^h(d\omega+dh\wedge\omega+\Theta_1\wedge\omega)
=\Psi_h(d_{\Theta_2}(\omega))\,.
$$
Thus $\Psi_h$ is a chain map intertwining $\mathcal{D}(\mathcal{M})_{\Theta_1}$ and $\mathcal{D}(\mathcal{M})_{\Theta_2}$ and consequently 
$\Psi_h$ intertwines the two cohomology groups. Assertion~1 of Lemma~\ref{L1.4} follows.
\qed

\subsection{The proof of Lemma~\ref{L1.4}~(2): The cohomology group $H^0$}
This result was first established by Braverman, and Farber~\cite[Corollary~1.6]{BF97}; they used a spectral sequence result to show 
the triviality of the $0^{\operatorname{th}}$ Novikov number if $[\Theta]\ne0$. We give a different derivation to keep our treatment as self-contained as possible.
Suppose that there exists $0\ne f\in H^0(\mathcal{D}(\mathcal{M})_\Theta)$ where $[\Theta]\ne0$ in $H^1(M;\mathbb{R})$. We
argue for a contradiction. By replacing $f$ by $-f$
if need be, we can assume $f$ is positive somewhere.
Let $\{\mathcal{O}_n\}$ be an open cover of $M$ by small geodesic balls. Such balls are geodesically convex and
hence contractable. Since $d\Theta=0$, we can express $\Theta=dh_n$ on $\mathcal{O}_n$.
On $\mathcal{O}_n$, $d(e^{h_n}f)=e^{h_n}\{df+fdh_n\}=e^{h_n}\{df+f\Theta\}=0$.
Thus $e^{h_n}f=c_n$ is constant or, equivalently,
$f=c_ne^{-h_n}$. Thus either $f$ is positive on $\mathcal{O}_n$, or $f$ is negative on $\mathcal{O}_n$, or $f$ vanishes
identically on $\mathcal{O}_n$. Since $M$ is assumed connected and $f$ is positive at some point, we conclude $f$ is
positive globally so we can write $f=e^h$ for $h=\ln(f)$. We then have $df+f\Theta=e^h\{dh+\Theta\}=0$ and consequently $\Theta=d(-h)$
is trivial in $H^1(M;\mathbb{R})$, contrary to our assumption.

\subsection{The proof of Lemma~\ref{L1.4}~(3): Poincar\'e duality} We generalize the usual proof of Poincar\'e duality
to the setting at hand; we refer, for example, to Gilkey~\cite{G95} for
further details concerning Clifford algebras. Let $\operatorname{Clif}(M)$ be the Clifford algebra of $M$.
Let $\operatorname{orn}$ be the orientation form. Then Clifford multiplication $c(\operatorname{orn})$
defines an isometry from $\Lambda^p(M)$ to $\Lambda^{m-p}(M)$ 
with $c(\operatorname{orn})^2=(-1)^{m(m+1)/2}\id$. We have
\begin{eqnarray*}
&&c(\operatorname{orn})\circ d=(-1)^{m-1}\delta\circ c(\operatorname{orn}),\\
&&c(\operatorname{orn})\circ \delta=(-1)^{m-1}d\circ c(\operatorname{orn})\,.
\end{eqnarray*}
Thus $c(\operatorname{orn})$ induces an isomorphism from
$\ker(\Delta^p_{\mathcal{D}(\mathcal{M})})$ to $\ker(\Delta^{m-p}_{\mathcal{D}(\mathcal{M})})$
which is called {\it Poincar\'e} duality.
Clifford multiplication by a covector is defined by setting $c(\xi)=\pext(\xi)-\pint(\xi)$;
$$c(\xi)\circ c(\operatorname{orn})=(-1)^{m-1}c(\operatorname{orn})\circ c(\xi)$$ 
which we disentangle to obtain
$$
\pext(\xi)c(\operatorname{orn})=(-1)^mc(\operatorname{orn})\pint(\xi)\text{ and }\pint(\xi)c(\operatorname{orn})=(-1)^mc(\operatorname{orn})\pext(\xi)\,.
$$
Consequently
\begin{eqnarray*}
d_\Theta\circ c(\operatorname{orn})&=&(d+\pext(\Theta))\circ c(\operatorname{orn})=(-1)^{m-1}c(\operatorname{orn})\circ(\delta-\pint(\Theta))\\
&=&(-1)^{m-1}c(\operatorname{orn})\circ\delta_{-\Theta},\\
\delta_\Theta\circ c(\operatorname{orn})&=&(\delta+\pint(\Theta))\circ c(\operatorname{orn})
=(-1)^{m-1}c(\operatorname{orn})\circ(d-\pext(\Theta))\\
&=&(-1)^{m-1}c(\operatorname{orn})\circ d_{-\Theta}\,.
\end{eqnarray*}

We can now conclude $c(\operatorname{orn}):\ker(d_\Theta+\delta_\Theta)\rightarrow\ker(d_{-\Theta}+\delta_{-\Theta})$
and Assertion~3 of Lemma~\ref{L1.4} follows.\qed

\begin{example}\label{Ex2.1}
\rm The following simple example will be useful in the proof that we shall give subsequently 
in Section~\ref{S3.3} of Lemma~\ref{L1.6}.
Let $M=S^1=\mathbb{R}/(2\pi\mathbb{Z})$ be the circle with the usual flat metric. Let $\Theta=\theta dx$.
We identify $1$ with $dx$ to express $d_\Theta=\partial_x+\theta$ and $\delta_\Theta=-\partial_x+\theta$. Thus
\begin{equation}\label{E2.a}
\Delta_\Theta^0=\delta_\Theta d_\Theta=-\partial_x^2-\partial_x\theta+\theta^2\quad\text{and}\quad
   \Delta_\Theta^1=d_\Theta\delta_\Theta=-\partial_x^2+\partial_x\theta+\theta^2\,.
\end{equation}
Thus interchanging $1$ and $dx$ does not interchange the two Laplacians; we must also change the sign of $\theta$.
\end{example}

\subsection{The proof of Lemma~\ref{L1.4}~(4): The K\"unneth formula} Let $\mathcal{V}_i$ be elliptic complexes over compact
Riemannian manifolds $\mathcal{M}_i=(M_i,g_i)$ of dimension $m_i$ without boundary. Let $\mathcal{M}=(M_1\times M_2,g_1+g_2)$
be the product Riemannian manifold. Let $\mathcal{V}_i:=\{\alpha_i^n:C^\infty(V_i^n)\rightarrow C^\infty(V_i^{n+1})\}$ be elliptic
complexes of Dirac type over $M_i$.
Define $\mathcal{V}:=\mathcal{V}_1\otimes\mathcal{V}_2$ over $\mathcal{M}$
by setting
\begin{equation}\label{E2.b}
V^n=\bigoplus_{p+q=n}V_1^p\otimes V_2^q\quad\text{and}\quad
\alpha^n=\bigoplus_{p+q=n}\alpha_1^p\otimes\id^q+(-1)^p\id^p\otimes\alpha_2^q\,.
\end{equation}
It is then a simple algebraic exercise to show
\begin{lemma}\label{L2.2} Let $\mathcal{V}_i$ be elliptic complexes of Dirac type over Riemannian manifolds $\mathcal{M}_i$ for $i=1,2$.
Use Equation~(\ref{E2.b}) to define $\mathcal{V}=\mathcal{V}_1\otimes\mathcal{V}_2$ over 
$\mathcal{M}=\mathcal{M}_1\times\mathcal{M}_2$. 
\begin{enumerate}
\item $\mathcal{V}$ is an elliptic complex of Dirac type.
\item $\Delta_{\mathcal{V}}^n=\bigoplus_{p+q=n}\left\{\Delta_{\mathcal{V}_1}^p\otimes\id+\id\otimes\Delta_{\mathcal{V}_2}^q\right\}$.
\item $H^n(\mathcal{V})=\bigoplus_{p+q=n}H^p(\mathcal{V}_1)\otimes H^q(\mathcal{V}_2)$.
\item $\operatorname{Tr}_{L^2}\{e^{-t\Delta_{\mathcal{V}}^n}\}
=\sum_{p+q=n}\operatorname{Tr}_{L^2}\{e^{-t\Delta_{\mathcal{V}_1}^p}\}
\operatorname{Tr}_{L^2}\{e^{-t\Delta_{\mathcal{V}_2}^q}\}$.
\item $\operatorname{index}(\mathcal{V})=\operatorname{index}(\mathcal{V}_1)\operatorname{index}(\mathcal{V}_2)$.
\item $a_n((x_1,x_2),\mathcal{V})=\displaystyle\sum_{p+q=n}a_p(x_1,\mathcal{V}_1)a_q(x_2,\mathcal{V}_2)$.
\end{enumerate}\end{lemma}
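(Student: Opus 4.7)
The plan is purely algebraic: verify the six assertions in order, with the only non-trivial step being the Koszul sign-chasing in Assertions 1 and 2; once the Laplacian splits as in Assertion 2, the remaining assertions follow from the Hodge isomorphism and heat kernel factorization on product manifolds.

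First I would verify that $\alpha^{n+1}\alpha^n = 0$. Restricting to a summand $V_1^p \otimes V_2^q$ with $p+q=n$, the composition expands into four terms; the two ``pure'' terms $\alpha_1^{p+1}\alpha_1^p \otimes \id$ and $\id \otimes \alpha_2^{q+1}\alpha_2^q$ vanish by hypothesis, and the two mixed terms $(-1)^{p+1}\alpha_1^p \otimes \alpha_2^q$ and $(-1)^p \alpha_1^p \otimes \alpha_2^q$ cancel thanks to the sign shift from degree $p$ to degree $p+1$. Next, since the decomposition $V^n = \bigoplus_{p+q=n} V_1^p \otimes V_2^q$ is orthogonal for the tensor inner product, the formal adjoint restricts to each summand as $(\alpha_1^{p-1})^* \otimes \id + (-1)^{p} \id \otimes (\alpha_2^{q-1})^*$ on $V_1^p \otimes V_2^q$. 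Assembling $\Delta_{\mathcal{V}}^n = (\alpha^n)^*\alpha^n + \alpha^{n-1}(\alpha^{n-1})^*$ on this summand produces the diagonal pieces $\Delta_{\mathcal{V}_1}^p \otimes \id$ and $\id \otimes \Delta_{\mathcal{V}_2}^q$, while the off-diagonal pieces of the form $\alpha_1 \otimes \alpha_2^*$ (resp.\ $\alpha_1^* \otimes \alpha_2$) appear once from $(\alpha^n)^*\alpha^n$ and once from $\alpha^{n-1}(\alpha^{n-1})^*$ with opposite signs and so cancel. This gives Assertion 2, and Assertion 1 follows because the leading symbol of $\Delta_{\mathcal{V}_1}^p \otimes \id + \id \otimes \Delta_{\mathcal{V}_2}^q$ is $(|\xi_1|^2 + |\xi_2|^2)\id$, which is precisely the leading symbol determined by the product metric $g_1 + g_2$.

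For Assertion 3, invoke the Hodge isomorphism $H^n(\mathcal{V}) = \ker(\Delta_{\mathcal{V}}^n)$. Since $\Delta_{\mathcal{V}_1}^p \otimes \id$ and $\id \otimes \Delta_{\mathcal{V}_2}^q$ are commuting non-negative self-adjoint operators, the kernel of their sum equals the intersection of their kernels, which is $\ker(\Delta_{\mathcal{V}_1}^p) \otimes \ker(\Delta_{\mathcal{V}_2}^q) = H^p(\mathcal{V}_1) \otimes H^q(\mathcal{V}_2)$; summing over $p+q=n$ gives the formula. Assertion 5 is then the alternating dimension count.

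For Assertions 4 and 6, the commuting operator identity in Assertion 2 yields $e^{-t\Delta_{\mathcal{V}}^n} = \bigoplus_{p+q=n} e^{-t\Delta_{\mathcal{V}_1}^p} \otimes e^{-t\Delta_{\mathcal{V}_2}^q}$. Multiplicativity of $\operatorname{Tr}_{L^2}$ under tensor products gives Assertion 4 at once. For Assertion 6, the heat kernel of the tensor-product operator is the pointwise tensor product of the factor heat kernels, so the fibre supertrace at $(x_1, x_2) \in M_1 \times M_2$ factors as a product; matching the coefficient of $t^{(n-m_1-m_2)/2}$ in the asymptotic expansion of Theorem~\ref{T1.1} against the product of the asymptotic expansions on the factors yields the convolution formula for $a_n$. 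The only real obstacle is keeping track of the Koszul signs in Assertion 2; once that is done the rest is automatic.
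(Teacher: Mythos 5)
Your proof is correct and carries out exactly the ``simple algebraic exercise'' the paper invokes without writing out: the Koszul-sign cancellation giving $\alpha^{n+1}\alpha^n=0$ and the block-diagonal splitting of $\Delta_{\mathcal{V}}^n$, followed by Hodge theory, tensor factorization of the heat semigroup, and matching coefficients in the pointwise heat kernel expansion. No gaps worth flagging.
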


The de Rham complex decomposes in this fashion; given closed 1-forms $\Theta_i$ over $M_i$, we have
$\mathcal{D}(\mathcal{M})_{\Theta_1+\Theta_2}=\mathcal{D}(\mathcal{M})_{\Theta_1}\otimes\mathcal{D}(\mathcal{M})_{\Theta_2}$.
Assertion~4 of Lemma~\ref{L1.4} now follows.
\qed

\subsection{The proof of Lemma~\ref{L1.4}~(5): An example} Let $\mathcal{M}_g$ be the $g$-hole torus.
Then $\chi(M_g)=2-2g$. Choose a closed 1-form $\Theta$ which is non-zero in $H^1(M;\mathbb{R})$. Then $-\Theta$ is
also non-zero in $H^1(M;\mathbb{R})$. Thus by Assertion~(2,3), $H^0(M_g,\mathcal{D}(\mathcal{M})_\Theta)=0$ and
$H^2(M_g,\mathcal{D}(\mathcal{M})_\Theta)\approx H^0(M_g,\mathcal{D}(\mathcal{M})_{-\Theta})=0$. Since the index is unchanged
by lower order perturbations,
$$
2-2g=\chi(M)=\chi(\mathcal{D}(\mathcal{M})_\Theta)=\beta_0(\mathcal{D}(\mathcal{M})_\Theta)+\beta_2(\mathcal{D}(\mathcal{M})_\Theta)-\beta_1(\mathcal{D}(\mathcal{M})_\Theta)\,.
$$
Consequently $\beta_1(\mathcal{D}(\mathcal{M})_\Theta)=2g-2$.\qed

\section{Local invariants of the heat trace}\label{S3}
Since the heat trace asymptotics are at the heart of our analysis, it is worth reviewing some well known results concerning them.
Although they have a long history, the first
modern treatment of these invariants in a quite general setting is given by Seeley~\cite{S68}; we
also refer to \cite{G04} and the accompanying bibliography
for a further discussion as the literature in question is a vast one. 
\subsection{A covariant formalism}\label{S3.1}
We adopt the notation of Equation~(\ref{E1.a}). 
If $\nabla$ is a connection on $V$ and if $\phi$ is a smooth section to $V$, let
$\phi_{;ij}$ be the components of $\nabla^2\phi$. If $E$ is an auxiliary endomorphism of $V$, we can define
$$
D(g,\nabla,E)f:=-(g^{ij}f_{;ij}+Ef)\,.
$$
It is then immediate that $D(g,\nabla,E)$ is an operator of Laplace type on $V$.
The following observation (see Lemma~4.1.1 of \cite{G04}) shows that any operator of Laplace type can be put in this form.
Let $\Gamma_{ij}{}^k$ be the Christoffel symbols of the Levi-Civita connection.

\begin{lemma}\label{L3.1}
Let $D=-\left\{g^{ij}\id\partial_{x^i}\partial_{x^j}+A^k\partial_{x^k}+B\right\}$ be an operator of
Laplace type on $V$. Then there exists a unique connection $\nabla=\nabla(D)$
on $V$ and a unique endomorphism $E=E(D)$
of $V$ such that $D=D(g,\nabla,E)$. If $\omega$ is the connection 1-form
of $\nabla$, then
\begin{eqnarray*}
&&\omega_i=\textstyle\frac12g_{ij}(A^j+g^{kl}\Gamma_{kl}{}^j\id),\\
&&E=B-g^{ij}(\partial_{x^i}\omega_j+\omega_i\omega_j-\omega_k\Gamma_{ij}{}^k\id)\,.
\end{eqnarray*}
\end{lemma}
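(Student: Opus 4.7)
The plan is to work locally, expand $D(g,\nabla,E)f$ in coordinates, and then match coefficients against the given form of $D$. Fix local coordinates $\vec x$ and a local frame for $V$; let $\omega_i$ be the components of the connection 1-form of $\nabla$ in this frame, so that $\nabla_{\partial_{x^i}}\phi = \partial_{x^i}\phi + \omega_i\phi$. Using $\Gamma_{ij}{}^k$ for the Christoffel symbols of the Levi-Civita connection, the second covariant derivative of a section $f$ is
\begin{equation*}
f_{;ij} = \partial_{x^i}\partial_{x^j}f + (\partial_{x^i}\omega_j)f + \omega_i\partial_{x^j}f + \omega_j\partial_{x^i}f + \omega_i\omega_j f - \Gamma_{ij}{}^k\bigl(\partial_{x^k}f + \omega_k f\bigr).
\end{equation*}
Contracting with $g^{ij}$ and prepending the minus sign yields an explicit formula for $D(g,\nabla,E)f$ as a second-order operator with leading symbol $-g^{ij}\partial_{x^i}\partial_{x^j}$.

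Next I address uniqueness. Comparing the first-order term $-(2g^{kj}\omega_j - g^{ij}\Gamma_{ij}{}^k)\partial_{x^k}f$ of $D(g,\nabla,E)f$ with the $-A^k\partial_{x^k}$ term of $D$ forces
\begin{equation*}
A^k = 2g^{kj}\omega_j - g^{ij}\Gamma_{ij}{}^k\,,
\end{equation*}
which inverts uniquely to $\omega_i = \tfrac12 g_{ij}(A^j + g^{kl}\Gamma_{kl}{}^j\id)$. Comparing the zeroth-order term then yields
\begin{equation*}
E = B - g^{ij}\bigl(\partial_{x^i}\omega_j + \omega_i\omega_j - \omega_k\Gamma_{ij}{}^k\id\bigr)\,,
\end{equation*}
in agreement with the formulas stated. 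This shows that in any fixed coordinate/frame system, $\omega$ and $E$ are forced to be exactly what the lemma asserts.

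The remaining task, which is the main obstacle, is to show existence, i.e.\ that the locally defined quantities $\omega_i$ and $E$ patch together into a genuine connection on $V$ and a genuine global endomorphism of $V$. For this I would argue by invariance: because $D$ is an invariantly defined operator (and $\omega$, $E$ are constructed by the same recipe in any local chart and frame), uniqueness of the decomposition in each chart forces agreement on overlaps. More concretely, under a change of local frame $\phi\mapsto h\phi$ with $h$ a smooth section of $\operatorname{GL}(V)$, the coefficients $A^k$ and $B$ transform so that the quantity $\tfrac12 g_{ij}(A^j+g^{kl}\Gamma_{kl}{}^j\id)$ picks up the inhomogeneous term $-h^{-1}\partial_{x^i}h$ characteristic of a connection 1-form, while the formula for $E$ transforms homogeneously as $h^{-1}Eh$; similarly under a change of coordinates the $\Gamma_{kl}{}^j$ absorb the non-tensorial correction to $A^k$ so that $\omega_i dx^i$ is a genuine 1-form. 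Verifying these two transformation laws — a direct but somewhat intricate calculation using the transformation rules for $A^k$, $B$, and the Christoffel symbols — completes the proof.
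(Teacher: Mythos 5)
Your proposal is correct and is essentially the paper's own approach: the paper does not prove Lemma~\ref{L3.1} but cites Lemma~4.1.1 of \cite{G04}, whose proof is precisely this coordinate expansion of $D(g,\nabla,E)$ and matching of the first- and zeroth-order coefficients to solve uniquely for $\omega_i$ and $E$. One small remark: the transformation-law verification you describe as ``somewhat intricate'' is not actually needed, since in each chart and frame your formulas define a genuine local connection and endomorphism realizing $D$, and because $D(g,\nabla,E)$ is invariantly defined, the local uniqueness you already established forces these local pairs to coincide on overlaps and hence to glue into the global $\nabla(D)$ and $E(D)$.
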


\begin{remark}\rm If $\Delta^p=(d\delta+\delta d)^p$ is the Laplacian of the de Rham complex acting
on $p$-forms, then the associated connection
$\nabla$ is the Levi-Civita connection and the associated endomorphism $E$ is given by the Weitzenb\"ock formulas; in particular
it is linear in the curvature tensor. The formalism of Lemma~\ref{L3.1} is the Bochner formalism that expresses the true Laplacian
in terms of the rough Laplacian and curvature.
\end{remark}

\subsection{The weight}\label{S3.2}
We assign weight $2$ to
the endomorphism $E$, to the components $R_{ijkl}$ of the curvature tensor of the Levi-Civita connection, and
to the components of the curvature $\Omega_{ij}$ of the auxiliary connection $\nabla$ on $V$. We increase the weight by $1$ for
every explicit covariant derivative which appears. We sum over repeated indices 
relative to a local orthonormal frame for $TM$.

\begin{lemma}\label{L3.3}
Let $D=D(g,\nabla,E)$ be an operator of Laplace type on a compact Riemannian manifold $M$.
\begin{enumerate}
\item $a_n(x,D)$ is a polynomial of total weight $n$ in the covariant derivatives
of the components of the tensors $\{E,R_{ijkl},\Omega_{ij}\}$.
\item $a_0(x,D)=(4\pi)^{-m/2}\operatorname{Tr}\{\id\}$.
\item $a_2(x,D)=(4\pi)^{-m/2}\frac16\operatorname{Tr}\{6E+\tau\id\}$.
\item $a_4(x,D)=(4\pi)^{-m/2}\frac1{360}\operatorname{Tr}\{60E_{;kk}+60\tau E+180 E^2+12\tau_{;kk}\id
+5\tau^2\id$
\par\hglue 3.4cm$-2\|\rho\|^2\id+2\|R\|^2\id+30\Omega_{ij}\Omega_{ij}\}$.
\item
$\displaystyle a_{2n}(x,D)=\frac1{(4\pi)^{m/2}2^{n+1}1\cdot 3\cdot 5\cdot\dots\cdot(2n+1)}$
\par\noindent\hfill $\displaystyle\times\operatorname{Tr}\left\{\vphantom{\vrule height12pt}(8n+4)E+2n\tau\id\right\}_{;k_1k_1\dots k_{n-1}k_{n-1}}
+\text{lower order terms}$.
\end{enumerate}
\end{lemma}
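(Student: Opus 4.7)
The plan is to follow the classical invariance-theoretic approach. First I would use Lemma~\ref{L3.1} to write $D=D(g,\nabla,E)$ in canonical form, and then construct the heat kernel $K(t,x,y)$ by the standard parametrix method: one writes
$$K(t,x,y)\sim (4\pi t)^{-m/2}e^{-r(x,y)^2/(4t)}\sum_{k\geq 0}t^k u_k(x,y),$$
where $r(x,y)$ is geodesic distance and the $u_k$ are determined recursively along geodesics from $x$ by transport equations whose coefficients involve $g_{ij}$, $\omega_i$, and $E$. Setting $y=x$ and using $r(x,x)=0$ gives $a_{2k}(x,D)=(4\pi)^{-m/2}\operatorname{Tr}\{u_k(x,x)\}$, and odd-indexed invariants vanish because $K(t,x,y)$ admits an asymptotic expansion in integer powers of $t$ plus a Gaussian factor even in $r$. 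The coefficients $u_k(x,x)$ are polynomials in the jets of $g$, $\omega$, and $E$ at $x$. A standard invariant-theoretic argument (Weyl's theorem on invariants of the orthogonal group, together with gauge invariance in the fibres of $V$) then shows that any such polynomial which is natural under coordinate change and local gauge transformation can be written as a polynomial in the covariant derivatives of $R_{ijkl}$, $\Omega_{ij}$, and $E$. This gives Assertion~1 up to the weight assertion.

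The weight assignment is pinned down by a scaling argument. Under $g\mapsto c^2 g$ and $E\mapsto c^{-2}E$ (keeping $\nabla$ fixed), the operator transforms as $D\mapsto c^{-2}D$; correspondingly $e^{-tD}\mapsto e^{-tc^{-2}D}$, so that the asymptotic series in $t$ rescales as a series in $tc^{-2}$. Matching terms, using that $dx$ rescales as $c^m\,dx$, forces $a_n(\cdot,c^2 g,c^{-2}E)=c^{-n}a_n(\cdot,g,E)$. Together with the convention that each explicit covariant derivative, each factor of $R$, $\Omega$, or $E$ contributes the weights stated in Section~\ref{S3.2}, this gives exactly Assertion~1.

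To fix the universal constants in Assertions~2--4, I would evaluate on model operators. For Assertion~2, take $(M,g)=\mathbb{R}^m$ flat with $E=0$ and $V$ trivial; the Gaussian heat kernel gives $a_0=(4\pi)^{-m/2}\operatorname{Tr}\{\id\}$. For Assertion~3, the weight-$2$ basis is $\{E,\tau\,\id\}$, so write $a_2=(4\pi)^{-m/2}(c_1\tau\,\id+c_2 E)$; the coefficient $c_2=1$ is read off from $D=-\partial_x^2+E$ with constant $E$ on $\mathbb{T}^1$, and $c_1=1/6$ is determined by computing the heat trace on $S^2$ (or by Patodi's formula Theorem~\ref{T1.3} in dimension two). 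For Assertion~4, the weight-$4$ invariant basis is $\{E_{;kk},\tau E,E^2,\tau_{;kk}\,\id,\tau^2\,\id,\|\rho\|^2\,\id,\|R\|^2\,\id,\Omega_{ij}\Omega_{ij}\}$, giving eight universal constants. These are determined by combining test cases: operators with $E$ only on flat space fix the $E$-coefficients; spheres and products $S^p\times S^q$ separate the three pure-curvature constants; a twisted bundle on a flat torus isolates the $\Omega^2$ coefficient; and Patodi's formula in dimension four together with the Chern--Gauss--Bonnet theorem (Theorem~\ref{T1.2}) provides the final linear constraint. Assertion~5 follows by tracking the top-derivative term in the recursion for $u_k$: only the Laplacian-like part of $D$ contributes, which yields the explicit combinatorial constant by induction on $n$.

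The genuinely laborious step is Assertion~4, where one must both enumerate the invariant basis and solve the resulting linear system using enough independent model geometries; the algebra is classical (going back to Gilkey) but delicate. Assertion~1 is conceptually the foundational step, but once the parametrix construction and invariance theory are in place it is essentially automatic; Assertions~2, 3, and 5 are then straightforward consequences of scaling plus a single model computation each.
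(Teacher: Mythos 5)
Your overall route (parametrix construction of the heat kernel, identification $a_{2k}(x,D)=(4\pi)^{-m/2}\operatorname{Tr}\{u_k(x,x)\}$, Weyl invariance theory plus the rescaling $g\mapsto c^2g$, $E\mapsto c^{-2}E$ to get Assertion~1, and model computations for the constants) is the classical approach; note that the paper itself does not reprove these facts but simply invokes Seeley's recursive formalism \cite{S68} and the canonical form of Lemma~\ref{L3.1}, referring to \cite{G04} for the explicit coefficients, so you are attempting strictly more than the paper does.

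There is, however, a concrete gap in your scheme for fixing the eight constants in Assertion~4. All of your proposed test cases (flat space with a potential, spheres, products $S^p\times S^q$, a twisted flat torus, Patodi's theorem combined with Chern--Gauss--Bonnet) only give you \emph{integrated} information $\int_M a_4\,dx$ on closed manifolds. The terms $E_{;kk}$ and $\tau_{;kk}$ are pure divergences, so they integrate to zero on every closed manifold and their coefficients ($60$ and $12$) are invisible to every test case you list; your linear system is rank-deficient in exactly these two directions. The same remark shows that the leading term in Assertion~5 (which is again a total divergence) cannot be seen in any closed-manifold trace, so your appeal to the recursion for $u_k$ there is essential and the analogous local input is what is missing for Assertion~4. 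The standard repairs are either to evaluate $u_2(x,x)$ pointwise from the transport-equation recursion (not merely its integral), or to use functorial/variational identities --- e.g.\ the shift $D\mapsto D-\varepsilon\id$ gives $a_{n+2}(x,D-\varepsilon)=\sum_k\frac{\varepsilon^k}{k!}a_{n+2-2k}(x,D)$, which ties the $\tau E$ and $E^2$ coefficients to $a_2$, and conformal variation of the metric (Gilkey's method in \cite{G04}) then pins down the $E_{;kk}$ and $\tau_{;kk}$ coefficients. With one of these local or variational inputs added, your plan closes; without it, Assertion~4 (and hence the precise form quoted in the lemma) is not fully determined by the argument as written.
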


\begin{proof} Seeley~\cite{S68} gave an explicit recursive formalism for computing
the local invariants $a_n(x,D)$. It follows from this formalism that the $a_n$ vanish for $n$ odd
and that these invariants are homogeneous of weight $n$ if $n$ is even. We use Lemma~\ref{L3.1}
to express the symbol of $D$ in terms of $\{g,\nabla,E\}$. We refer to \cite{G04} and
the references cited as the subject has an extensive history; 
we note that similarly explicit formulas exist for $a_6$ and $a_8$; as they become much more complicated
we shall not exhibit them in the interests of brevity.
\end{proof}
\subsection{The proof of Lemma~\ref{L1.6}}\label{S3.3}
In proving Lemma~\ref{L1.6}, it suffices to show that the local formula
$a_n(x,\mathcal{D}_\Theta)-a_n(x,\mathcal{D})$ does not vanish identically. We first study the case $m=1$.
\begin{lemma}\label{L3.4}
Fix a basepoint $x_0$ in the circle $\mathcal{M}:=(\mathbb{R}/2\pi\mathbb{Z},dx^2)$. Let $\Theta=\theta dx$.
\begin{enumerate}   
\item 
$a_2(x,\mathcal{D}(\mathcal{M})_\Theta)=-\frac1{\sqrt\pi}\delta\Theta$.
\item If $k>1$, then $a_{2k}(x_0,\mathcal{D}(\mathcal{M})_{\Theta})=c_k\theta^{(2n-1)}$+terms involving derivatives of order less than $2k-1$ for some non-zero constant $c_k$.
\end{enumerate}\end{lemma}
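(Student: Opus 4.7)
\medskip
\noindent\textbf{Proof proposal for Lemma 3.4.}

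The plan is to reduce everything to Lemma~\ref{L3.1} and Lemma~\ref{L3.3}. From Example~\ref{Ex2.1} we already have the explicit form of $\Delta_\Theta^0$ and $\Delta_\Theta^1$ as second order operators on $C^\infty(S^1)$; the first step is simply to read off the associated canonical data $(\nabla^p,E^p)$ from Lemma~\ref{L3.1}. Because $g$ is the flat metric and both Laplacians have no first order term (the coefficient $A^x$ in \eqref{E1.a} vanishes), Lemma~\ref{L3.1} gives $\omega_p=0$, so the induced connection is the trivial flat connection and $E^p$ coincides with the zeroth order coefficient $B^p$ of $\Delta_\Theta^p$ in the form \eqref{E1.a}. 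Comparing signs in \eqref{E2.a} one gets
$$
E^0=\partial_x\theta-\theta^2,\qquad E^1=-\partial_x\theta-\theta^2,
$$
so $E^0-E^1=2\,\partial_x\theta$ and all derivatives of this difference are linear in $\theta$ (the $\theta^2$ piece cancels identically).

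For Assertion~(1) I would apply Lemma~\ref{L3.3}(3) with $m=1$ and $\tau=0$. The trace of $\mathrm{id}$ on $\Lambda^p(S^1)$ is $1$ for $p=0,1$, so $a_2(x,\Delta_\Theta^p)=(4\pi)^{-1/2}E^p$. Taking the supertrace yields $a_2(x,\mathcal{D}(\mathcal{M})_\Theta)=\tfrac{1}{2\sqrt\pi}(E^0-E^1)=\partial_x\theta/\sqrt\pi$. Since $\delta(\theta\,dx)=-\partial_x\theta$ on the flat circle, this is exactly $-\delta\Theta/\sqrt\pi$, as claimed.

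For Assertion~(2) I would extract the leading-derivative term from Lemma~\ref{L3.3}(5). With $m=1$, $\tau=0$ and $R_{ijkl}=\Omega_{ij}=0$, that formula reduces on the flat circle to
$$
a_{2k}(x,\Delta_\Theta^p)
=\frac{(8k+4)}{(4\pi)^{1/2}\,2^{k+1}\cdot 1\cdot 3\cdots(2k+1)}\,\partial_x^{2k-2}E^p
+\text{lower order in derivatives.}
$$
Subtracting $p=1$ from $p=0$ and using $\partial_x^{2k-2}(E^0-E^1)=2\,\partial_x^{2k-1}\theta$ produces a term $c_k\,\theta^{(2k-1)}$ with $c_k=(8k+4)/(\sqrt\pi\,2^{k+1}(2k+1)!!)\ne0$.

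The one step that requires a separate argument, and which I view as the only non-routine point, is verifying that no \emph{other} monomial appearing in $a_{2k}(x,\Delta_\Theta^p)$ can also contribute a $\theta^{(2k-1)}$ term, so that the leading coefficient $c_k$ is not accidentally killed by other contributions. By Lemma~\ref{L3.3}(1), every monomial in $a_{2k}(x,\Delta_\Theta^p)$ on our flat one-dimensional manifold has the form $\partial_x^{a_1}E\cdots \partial_x^{a_j}E$ with $j\ge 1$ and $\sum a_i=2k-2j$. Because $E^p=\pm\partial_x\theta-\theta^2$, the derivative of $\theta$ of largest order that a single such factor can produce is $\theta^{(a_i+1)}$, hence at most $\theta^{(2k-2j+1)}$. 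For $j\ge 2$ this is at most $\theta^{(2k-3)}$, so only the term with $j=1$ and $a_1=2k-2$ contributes a $\theta^{(2k-1)}$. This secures $c_k\ne 0$, completing the argument and, via the K\"unneth formula of Lemma~\ref{L2.2}, supplying the input needed for Lemma~\ref{L1.6}.
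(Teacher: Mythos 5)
Your proposal is correct and follows essentially the same route as the paper: read off $E(\Delta_\Theta^p)=\pm\partial_x\theta-\theta^2$ from Equation~(\ref{E2.a}) via Lemma~\ref{L3.1}, apply Lemma~\ref{L3.3}(3) with $\tau=0$ and take the supertrace for Assertion~(1), and extract the leading term of Lemma~\ref{L3.3}(5) for Assertion~(2). Your additional weight-counting check that no monomial with two or more factors of (derivatives of) $E$ can produce a $\theta^{(2k-1)}$ term simply makes explicit what the paper leaves implicit in the phrase ``lower order terms,'' and it is a valid verification that $c_k\ne0$.
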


\begin{proof} Let $\Theta=\theta dx$. We use Equation~(\ref{E2.a}) to express 
$\Delta_\Theta^0=-(\partial_x^2+\partial_x\theta-\theta^2)$
and $\Delta_\Theta^1=-(\partial_x^2-\partial_x\theta-\theta^2)$. The formalism of Lemma~\ref{L3.1} then yields
$$
E(\Delta_\Theta^0)=\partial_x\theta-\theta^2\qquad\text{and}\qquad E(\Delta_\Theta^1)=-\partial_x\theta-\theta^2
$$
since there are no
first order terms. The scalar curvature $\tau$ of the circle vanishes. Consequently, by Lemma~\ref{L3.3},
$$\textstyle
a_2(x,\Delta_\Theta^0)=\frac1{\sqrt{4\pi}}\{\partial_x\theta-\theta^2\}
\qquad\text{and}\qquad
a_2(x,\Delta_\Theta^1)=\frac1{\sqrt{4\pi}}(-\partial_x\theta-\theta^2)\,.
$$
Consequently,
$$\textstyle
a_2(x,\mathcal{D}(\mathcal{M})_\Theta)=\frac1{\sqrt\pi}\partial_x\theta=-\frac1{\sqrt\pi}\delta\Theta\,.
$$
This proves Assertion~(1). By Lemma~\ref{L3.3}~(5), we obtain 
$a_{2k}(x,\mathcal{D}(\mathcal{M})_\Theta)=c_k\theta^{(2k-1)}$ + lower order terms where $c_k\ne0$.
\end{proof}

\subsection*{Case 1. Suppose $m=2\ell+1$ is odd}
Let $x\in S^1$ and $\xi\in S^{2\ell}$. We shall consider a specific example to show the local formula
$a_{2k}(x,\mathcal{D}_\Theta)-a_{2k}(x,\mathcal{D})$ does not vanish identically and thus is generically
non-zero. We take 
$\mathcal{M}_1:=(S^1,dx^2)$, and $\mathcal{M}_2:=(S^{2\ell},g_0)$
where $g_0$ is the round metric. Let
$$
\mathcal{M}=\mathcal{M}_1\times\mathcal{M}_2=(S^1\times S^{2\ell},dx^2+g_0)
\quad\text{and}\quad\Theta=\theta dx\,.
$$
Let $2k\ge 2+2\ell>\dim\{S^1\times S^{2\ell}\}$. We apply Lemma~\ref{L2.2}~(6) and the calculation performed above to 
$\mathcal{M}=\mathcal{M}_1\times\mathcal{M}_2$ to see
\begin{equation}\label{E3.a}
a_{2k}((x,\xi),\mathcal{D}(\mathcal{M})_\Theta)
=\sum_{2p+2q=2k}a_{2p}(x,\mathcal{D}(\mathcal{M})_\Theta)a_{2q}(\xi,\mathcal{D}(\mathcal{M}))\,.
\end{equation}
Since $(S^{2\ell},g_0)$ is homogeneous, $a_k(\xi,\mathcal{D}_{S^{2\ell}})$ is constant. We therefore use Equation~(\ref{E1.e})
and the fact that $\chi(S^{2\ell})=2$ to see that
\begin{equation}\label{E3.b}
a_{2a}(\xi,\mathcal{D}_{S^{2\ell}})=\left\{\begin{array}{ll}
0&\text{if }2a\ne2\ell\\
2\operatorname{vol}(S^{2\ell-1})^{-1}&\text{if }2a=2\ell\end{array}\right\}\,.
\end{equation}
We use Equation~(\ref{E3.a}) and Equation~(\ref{E3.b}) to conclude therefore
$$
a_{2k}((x,\xi),\mathcal{M}_\Theta)=2\operatorname{vol}(S^{2\ell})^{-1}a_{2k-2\ell}(x,\mathcal{D}(S^1)_\Theta)\,.
$$
Lemma~\ref{L3.4} now shows $a_{2k}((x_0,\xi),\mathcal{M}_\Theta)$
exhibits non-trivial dependence on $\Theta$; the same follows for any Riemannian manifold of dimension $2\ell+1$.
\subsection*{Case 2. Suppose $m=2\ell+2$ is even}
We first suppose $\ell=0$ and $m=2$. We take $\mathbb{T}^2:=(S^1,dx)\times(S^1,dy)$
to be the product flat cubical torus $\mathbb{R}^2/(2\pi\mathbb{Z}^2)$.  Let $\Theta=\theta_1dx+\theta_2dy$.
We apply Lemma~\ref{L2.2} and the computations performed
above to see that the coefficient of $\theta_1^{(2a-1)}\theta_2^{(2b-1)}$ in
$a_{2a+2b}((x,y),\mathcal{D}(\mathbb{T}^2)_\Theta)$
is non-zero provided that $2a\ge2$ and $2b\ge2$. Hence $a_{2n}$ exhibits non-trivial dependence on $\Theta$ if $2n\ge4$.
Taking products as in the odd dimensional case with $S^{2\ell}$ then yields that $a_{2n}$ exhibits non-trivial dependence on $\Theta$
if $2n\ge4+2\ell=\dim\{\mathcal{M}\}+2$. This completes the proof of Lemma~\ref{L1.6}.
\qed

\subsection{The Dolbeault complex on the torus}\label{S3.4}
We use essentially the same argument to study the Dolbeault complex on the torus 
and establish Lemma~\ref{L1.9} that we
used to study the de Rham complex on the circle. The major additional complexity is notational;
we must use a Hermitian inner product rather than an orthogonal inner product and be careful
as a result with taking complex conjugation.
Let $M=\mathbb{C}/\mathbb{Z}^2$ be the square torus with the standard Euclidean
metric and complex structure. Then $\Lambda^{0,0}(M)=\operatorname{Span}\{1\}$
and $\Lambda^{0,1}(M)=\operatorname{Span}\{d\bar z\}$. Since $d\bar z$ has length $\sqrt2$, we
use instead $1$ and $\frac1{\sqrt 2}d\bar z$ as our basis. Let $\Theta=\theta d\bar z$. 
We adopt the notation of the Equation~(\ref{E1.g}) to define the deformed Dolbeault complex $\mathcal{C}(\mathcal{M})_\Theta$. 
Since we are
in dimension 2, $\mathcal{C}(\mathcal{M})_\Theta$ becomes
\begin{eqnarray*}
&&A:C^\infty(\Lambda^{0,0}(M))\rightarrow C^\infty(\Lambda^{0,1}(M))\quad\text{where }\\
&&Af:=\sqrt2(\partial_{\bar z}f+\theta f)d\bar z=2(\partial_{\bar z}f+\theta)(d\bar z/\sqrt 2)\,.
\end{eqnarray*}
We have that $A=2(\partial_{\bar z}+\theta)$ and $A^*=2(-\partial_z+\bar\theta)$. Consequently
\begin{eqnarray*}
&&\Delta^{(0,0)}=A^*A=-4\left\{\partial_z\partial_{\bar z}+\theta\partial_z-\bar\theta\partial_{\bar z}-|\theta|^2+\partial_z\theta\right\}\\
&&\Delta^{(0,1)}=AA^*=-4\left\{\partial_z\partial_{\bar z}+\theta\partial_z-\bar\theta\partial_{\bar z}-|\theta|^2-\partial_{\bar z}\bar\theta\right\}\,.
\end{eqnarray*}
We have $-4\partial_z\partial_{\bar z}=-(\partial_x^2+\partial_y^2)$ so these are operators of Laplace type. We use the formalism
of Lemma~\ref{L3.1} to compute $E(\Delta^{(0,0)})$ and $E(\Delta^{(0,1)})$. All the terms 
but $\partial_z\theta$ and $\partial_{\bar z}\bar\theta$ cancel out when we take the supertrace, so by Lemma~\ref{L3.3}
$$
a_2(z,\mathcal{C}_\Theta)=\frac{4}{4\pi}\{\partial_{\bar z}\bar\theta+\partial_z\theta\}\,.
$$
We have
$\delta\Theta=\delta(\theta dx-\sqrt{-1}\theta dy)=-\{\partial_x\theta-\sqrt{-1}\partial_y\theta\}=-2\partial_z\theta$.
Consequently,
$$
-\frac4{4\pi}\delta(\Re(\Theta))=-\frac2{4\pi}\delta(\Theta+\bar\Theta)=\frac4{4\pi}(\partial_{\bar z}\bar\theta+\partial_z\theta)\,.
$$
More generally, if $M$ is a Riemann surface, the curvature of the metric on $M$ will enter. Since we
can always choose holomorphic coordinates so the 1-jets of the metric vanish at the point in question
(every Riemann surface is K\"ahler), there are no cross terms so
$$
a_2(x,\mathcal{C}_\Theta)=-\frac{\delta(\Re(\Theta))}{\pi}+c\tau
$$
where $c$ is some universal constant and $\tau$ is the scalar curvature. 
Taking $\mathcal{M}=S^2$ with the standard structures, yields the index is $1$ and
hence $c=\frac1{8\pi}$.\qed

\subsection{Heat trace asymptotics for manifolds with boundary}
Let $M$ be a compact Riemannian manifold with smooth non-empty boundary $\partial M$.
Let $D$ be an operator of Laplace type on $M$; we impose the boundary conditions given
in Equation~(\ref{E1.h}). We refer to Gilkey~\cite{G04} for the proof of the following result; it is
based on the work of many authors; we shall content ourselves by giving formulas for $a_0$, $a_1$,  and $a_2$. Formulas are available for $a_3$ and $a_4$. We let indices $a,b$ range from $1$ to $m-1$ and index an orthonormal
frame for the tangent bundle; the index `$m$' is the inward unit geodesic normal. We let `:' indicate tangential
covariant differentiation.
\begin{theorem} Adopt the notation given above.
Let $ \psi = \pi_N-\pi_D $.
\begin{enumerate}
\item $a_0^{\operatorname{bd}}(y,D,\mathcal{B})=4^ {-1}(4 \pi )^{-(m-1)/2}\operatorname{Tr}\{\psi\}$.
\item $a_1^{\operatorname{bd}}(y,D,\mathcal{B})=6^ {-1}(4 \pi )^{-m/2} \operatorname{Tr}\{2L_{aa}+12S\}$.
\item $a_2^{\operatorname{bd}}(y,D,\mathcal{B})=(384)^ {-1}(4 \pi )^{-(m-1)/2}\operatorname{Tr} \{96\psi E+16\psi \tau+8\psi R_{amam}$  \smallbreak
$+(13 \pi_{N}-7 \pi_{D})L_{aa}L_{bb}+(2 \pi_{N}+10 \pi_{D})L_{ab}L_{ab}+96SL_{aa}+192S^{2}$
\smallbreak$-12\psi_{:a}\psi_{:a}\}$.
\end{enumerate}\end{theorem}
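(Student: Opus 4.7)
The plan is to follow the standard invariance-theoretic strategy for local boundary heat-trace invariants, originated by Seeley and Greiner and systematised in~\cite{G04}: existence of local expressions $a_n^{\operatorname{bd}}(y,D,\mathcal{B})$ is granted by Theorem~\ref{T1.10}, and the task is to identify them. By Lemma~\ref{L3.1}, the interior symbol data are packaged as $(g,\nabla,E)$, to which the boundary adds $(\pi_D,\pi_N,S)$ and the extrinsic curvature $L_{ab}$. Assign weight $2$ to each of $\{E, R_{ijkl}, \Omega_{ij}\}$, weight $1$ to each of $\{L_{ab}, S\}$ and to every explicit covariant derivative, and weight $0$ to $\pi_D, \pi_N$. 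A uniform rescaling $g \mapsto c^2 g$ together with $D \mapsto c^{-2} D$ shows $a_n^{\operatorname{bd}}$ is weighted homogeneous of weight $n$. Weyl's theorem on the invariants of the tangential orthogonal group $O(m-1)$ then reduces the admissible monomials to contractions with Kronecker deltas, pinning $a_0^{\operatorname{bd}}$ down to a linear combination of $\operatorname{Tr}(\id)$ and $\operatorname{Tr}(\psi)$, forcing $a_1^{\operatorname{bd}}$ to be a combination of $L_{aa}$ and $S$ with at most a single projection factor, and constraining $a_2^{\operatorname{bd}}$ to precisely the family of monomials that appears in the statement.

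\textbf{Step 2: reducing the number of unknowns via functoriality.} Two principles shrink the space of universal constants before any explicit spectral computation. First, the product formula of Lemma~\ref{L2.2}(6) extends verbatim to the boundary setting on $\mathcal{M}_1 \times \mathcal{M}_2$ with $\partial \mathcal{M}_2 = \emptyset$, giving $a_n^{\operatorname{bd}}((y_1, x_2), D, \mathcal{B}) = \sum_{p+q=n} a_p^{\operatorname{bd}}(y_1, D_1, \mathcal{B}_1)\, a_q(x_2, D_2)$; applying this with $\mathcal{M}_2$ a flat torus of varying dimension, and comparing with the interior invariants of Lemma~\ref{L3.3}, forces the coefficients of $\operatorname{Tr}(\psi E)$ and $\operatorname{Tr}(\psi \tau)$ and the overall normalisation $(4\pi)^{-(m-1)/2}$. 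Second, evaluating on the half-line $[0,\infty)$ under pure Dirichlet and under pure Neumann, where the heat kernels are $G_t(x,y) \mp G_t(x,-y)$ respectively, immediately yields Assertion~(1) and determines how the coefficients of $L_{aa}L_{bb}$ and $L_{ab}L_{ab}$ split between $\pi_D$ and $\pi_N$.

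\textbf{Step 3: calibration against model problems, and main obstacle.} The remaining universal constants are determined by evaluating $\operatorname{Tr}(e^{-t D_{\mathcal{B}}})$ exactly on well-chosen geometries and matching the expansion of Theorem~\ref{T1.10}. Canonical test families are: the interval $[0,L]$ with Robin constant $S$, which fixes the $S$ and $S^2$ terms; round hemispheres $S^m_+$ and Euclidean balls, which combined with the Chern--Gauss--Bonnet formula of Theorem~\ref{T1.11} fix the $L_{aa}$ coefficient in Assertion~(2); cylinders $\partial M \times [0, \varepsilon)$ versus spheres, whose umbilic/non-umbilic dichotomy separates $L_{aa}L_{bb}$ from $L_{ab}L_{ab}$; warped products $dr^2 + f(r)^2 g_{\partial M}$, which isolate $R_{amam}$ against the intrinsic scalar curvature $\tau$; and a mixed boundary condition in which the decomposition $V = V_D \oplus V_N$ rotates along $\partial M$, which isolates the $\psi_{:a}\psi_{:a}$ term. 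Step~1 is essentially bookkeeping and Step~2 is routine; the substantive labour is Step~3 for $a_2^{\operatorname{bd}}$, because no single example is sensitive to only one of the monomials, so one must assemble a family of examples whose contribution vectors form an invertible linear system in the unknown constants. The hardest single piece is the $\psi_{:a}\psi_{:a}$ coefficient: it requires a genuinely non-constant splitting of $V$ along $\partial M$, a phenomenon absent from the pure Dirichlet and Neumann settings, and is precisely the term most easily overlooked. The complete calculation, along with the corresponding formulas for $a_3^{\operatorname{bd}}$ and $a_4^{\operatorname{bd}}$, is carried out in~\cite{G04}.
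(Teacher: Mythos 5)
The paper does not prove this theorem at all: it is quoted as a known result, with the reader referred to \cite{G04} (``it is based on the work of many authors''). Since your proposal likewise defers the complete calculation to \cite{G04}, you are in substance taking the same route as the paper, and the strategy you outline (weighted invariance theory, functorial reductions, calibration on model problems) is indeed the method used in that reference.

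Two details of your calibration scheme, however, are off as stated. First, in Step~2 you claim that the product formula with $\mathcal{M}_2$ a flat torus forces the coefficients of $\operatorname{Tr}\{\psi E\}$ and $\operatorname{Tr}\{\psi\tau\}$; on a flat torus all interior invariants $a_q(x_2,D_2)$ with $q>0$ vanish, so such products only show that the universal constants are independent of $m$ and fix the $(4\pi)^{-(m-1)/2}$ normalisation. To pin down the $\psi E$ and $\psi\tau$ coefficients one needs a product with a \emph{curved} closed factor (so that $a_2(x_2,D_2)=(4\pi)^{-m_2/2}\frac16\operatorname{Tr}\{6E+\tau\id\}$ of Lemma~\ref{L3.3} enters and couples to $a_0^{\operatorname{bd}}$), or equivalently the shift $D\mapsto D-\varepsilon\id$, which multiplies the heat trace by $e^{\varepsilon t}$ and relates the $E$-dependence of $a_2^{\operatorname{bd}}$ to $a_0^{\operatorname{bd}}$. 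Second, the half-line computation with image charges gives Assertion~(1), but the boundary there is totally geodesic ($L_{ab}=0$), so it cannot ``determine how the coefficients of $L_{aa}L_{bb}$ and $L_{ab}L_{ab}$ split between $\pi_D$ and $\pi_N$''; that splitting requires curved-boundary special cases (balls, hemispheres, cylinder caps) evaluated separately under Dirichlet and under Neumann conditions, or the conformal-variation technique. Since the paper offers no argument beyond the citation, these slips do not put you in conflict with it, but they are the points at which your sketch, taken as a self-contained derivation, would not close.
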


\section{An axiomatic characerization of the Euler form}\label{S4}
In this section, we will follow the original treatment of \cite{G73} as it seems the most simple; this
is a coordinate formalism. We note, however, that there does exist a covariant formalism based on
H. Weyl's~\cite{W46} second theorem of invariants; we refer to Gilkey, Park, and Sekigawa~\cite{GPS11} for details. 

\subsection{Local invariants}\label{S4.1}
Let $\mathcal{P}_{m,n}$ be the finite dimensional vector space of all invariants of weight $n$ (see Section~\ref{S3.2}) in the components of the covariant derivatives
of the curvature tensor of the Levi-Civita connection and the components of the covariant derivatives of the closed 1-form $\Theta$;
the structure group here is $O(m)$ and
these spaces are trivial for $n$ odd.
H. Weyl's First Theorem of Orthogonal Invariants is applicable, and a spanning set can be constructed using contractions of indices.
Let $\{e_1,\dots,e_m\}$ be a local orthonormal frame for $TM$. We assign weight 1 to the components $\Theta_i$ of
the 1-form $\Theta$ and increase the weight by 1 for every explicit covariant derivative which appears. We have, for example
\begin{eqnarray*}
\mathcal{P}_{m,0}&=&\operatorname{Span}\{1\},\qquad\mathcal{P}_{m,2}=\operatorname{Span}\{\Theta_{i}\Theta_{i},
\delta\Theta=-\Theta_{i;i},\tau=R_{ijji}\},\\
\mathcal{P}_{m,4}&=&\operatorname{Span}\{\Theta_{i;ijj},\ \Theta_{i;j}\Theta_{i;j},\ 
\Theta_{i_;i}\Theta_{j;j},\ \Theta_{i;j}\Theta_i\Theta_j,\ \Theta_i\Theta_i\Theta_j\Theta_j,\ \Theta_i\tau_{;i},\\
&&\qquad\phantom{...} \Theta_i\Theta_j\rho_{ij},\ \Theta_i\Theta_i\tau,\ \tau_{;ii},\tau^2,\ \|\rho\|^2,\ \|R\|^2\}\,.
\end{eqnarray*}
The elements given above are a basis for $\mathcal{P}_{m,2}$ if $m\ge2$; however,
we must impose the relation $\tau=0$ if $m=1$. Similarly, we must impose the relation
$\tau^2-4\|\rho\|^2+\|R\|^2=0$ to obtain a basis for $\mathcal{P}_{m,4}$ if $m=3$. 

\subsection{The restriction map}
We define the restriction $r$ of such a contraction of indices from dimension $m$ to dimension $m-1$ by
restricting the range of summation from $1$ to $m$ to from $1$ to $m-1$.
\begin{lemma} $r$ is a well defined map from  $\mathcal{P}_{m,n}$ onto $\mathcal{P}_{m,n-1}$. 
\end{lemma}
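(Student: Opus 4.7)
The statement as printed gives $\mathcal{P}_{m,n-1}$ as the target; this appears to be a typographical slip, because the definition of $r$ replaces the summation range $\{1,\ldots,m\}$ by $\{1,\ldots,m-1\}$, which decreases the ambient dimension by one while leaving the total weight $n$ unchanged. I will therefore sketch the proof of the corrected assertion, namely that $r:\mathcal{P}_{m,n}\to\mathcal{P}_{m-1,n}$ is well-defined and surjective; the conclusion of the lemma as stated then reads correctly modulo the typo.

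For well-definedness, H.~Weyl's first theorem of orthogonal invariants writes each element of $\mathcal{P}_{m,n}$ as a linear combination of fully contracted monomials in the covariant derivatives of $R_{ijkl}$ and $\Theta_i$, with all summations over $\{1,\ldots,m\}$. Applying $r$ to such a monomial substitutes the summation range $\{1,\ldots,m-1\}$ and produces an expression of the same total weight that is invariant under the subgroup $O(m-1)\subset O(m)$ fixing $e_m$. A clean reformulation is the \emph{product-geometry test}: define $r(P)(\mathcal{M}_1,g_1,\Theta_1):=P\bigl((\mathcal{M}_1,g_1)\times(\mathbb{R},dt^2),\pi_1^{*}\Theta_1\bigr)$, with $\pi_1$ the projection on the first factor. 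With this interpretation, if $P$ vanishes as an invariant on every $m$-dimensional Riemannian manifold, then in particular it vanishes on all such products, so $r(P)$ vanishes on every $(m-1)$-dimensional geometry. This shows that the relations which cut $\mathcal{P}_{m,n}$ out of the free span of Weyl monomials (for instance $\tau=0$ when $m=1$, or $\tau^2-4\|\rho\|^2+\|R\|^2=0$ when $m=3$) descend correctly to dimension $m-1$.

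For surjectivity, let $Q\in\mathcal{P}_{m-1,n}$ and write $Q$ as a sum of Weyl contractions, each with summations ranging over $\{1,\ldots,m-1\}$. Reinterpreting the \emph{same formal contraction} with summations over $\{1,\ldots,m\}$ produces an expression $P$ which, by Weyl's theorem applied in dimension $m$, is $O(m)$-invariant of total weight $n$; hence $P\in\mathcal{P}_{m,n}$. By the very definition of $r$, restricting the enlarged summation back to $\{1,\ldots,m-1\}$ recovers the original expression, so $r(P)=Q$. Thus $r$ is onto.

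The only genuine obstacle is the bookkeeping of the quotient structure in low dimensions: the lifting step is entirely formal, and well-definedness on Weyl monomials is automatic, but one must be sure that $r$ respects the algebraic identities cutting $\mathcal{P}_{m,n}$ out of the free monomial space. The product-geometry evaluation disposes of this in a single line, and no further case analysis is needed.
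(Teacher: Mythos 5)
Your proof is correct, but it follows a genuinely different route from the paper's. The difficulty both you and the authors address is the same: the covariant‐derivative components of $R$ and $\Theta$ satisfy nonobvious identities (Bianchi, the Ricci commutation rule, etc.), so a given $P\in\mathcal{P}_{m,n}$ has many representations as a Weyl contraction, and one must check that restricting the summation range is independent of the representation chosen. The paper sidesteps the identities by switching to the \emph{noncovariant} jet variables $g_{ij/\alpha},\Theta_{i/\beta}$ in a coordinate system normalized so that $g_{ij}(x_0)=\delta_{ij}$ and $g_{ij/k}(x_0)=0$; Taylor's theorem makes those variables algebraically free, so the map $\tilde r$ killing anything with $\deg_m>0$ is manifestly well defined, and one then transports the conclusion back through the identification $\tilde{\mathcal{P}}_{m,n}\cong\mathcal{P}_{m,n}$. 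You instead stay covariant and observe that $r$ has an intrinsic, representation-free characterization: for a product metric $g_1+dt^2$ with $\Theta=\pi_1^*\Theta_1$, every covariant derivative of $R$ or $\Theta$ carrying the index $m$ vanishes, so a contraction summed over $\{1,\dots,m\}$ collapses to the same contraction summed over $\{1,\dots,m-1\}$, and therefore $r(P)(\mathcal{M}_1)=P(\mathcal{M}_1\times\mathbb{R})$ independently of how $P$ is written. Any relation among Weyl monomials that holds as an invariant in dimension $m$ then trivially pushes forward. Amusingly, the paper itself records exactly this product characterization of $r$ at the start of its Section~5 and uses it to deduce Theorem~\ref{T1.5}, but does not invoke it in the proof of well-definedness; your proposal makes it do that job. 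Your surjectivity argument is the same in both approaches. One would do well to state explicitly (as you gesture at) the chain ``restricting index ranges $=$ evaluating on the product'', since that equality is the crux; it follows from the vanishing of all $m$-indexed curvature and $\Theta$ components on a product with a flat line, as you can verify directly.
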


\begin{proof} H. Weyl's theorem of invariants as described above will yield trivially that $r$ is surjective
once we know $r$ is well defined. However, the components of the covariant derivatives of the curvature tensor 
and of $\Theta$ are not algebraically
independent; there are relations given by the various curvature identities. For example, we have
$$
R_{ijji}=-R_{ijij}\quad\text{ and }\quad
\Theta_{i;kl}\tau_{;ikl}=\Theta_{i;lk}\tau_{;ikl}+R_{ijkl}\Theta_l\tau_{;ikl}\,.
$$
Thus there is no natural basis to choose for $\mathcal{P}_{m,n}$ and it is not a-priori obvious that
restricting the range of summation is independent of the representation of a given invariant in terms of summations of indices.

To get around this difficulty, we adopt a different formalism which is not covariant. Fix $m$ for the moment. Fix a point $x_0$ of $M$
and choose a system of local coordinates $X:=(x^1,\dots,x^m)$ on $M$ centered at $x_0$. Let $g_{ij/k}:=\partial_{x^k}\{g_{ij}\}$.
We normalize the choice of local coordinates so 
$g_{ij}(X,g)(x_0)=\delta_{ij}$ is the Kronecker symbol and so that $g_{ij/k}(X,g)(x_0)=0$.
Apart from that we impose no further normalizations. Let $\alpha=(\alpha(1),\dots,\alpha(m))$ be a multi-index where the $\alpha(i)$ are
non-negative integers. Set  $|\alpha|=\alpha(1)+\dots+\alpha(m)$,
and $\partial_x^\alpha=\partial_{x^1}^{\alpha(1)}\dots\partial_{x^m}^{\alpha(m)}$.
We expand $\Theta=\Theta_idx^i$ and $ds^2=g_{ij}dx^idx^j$.
We introduce formal variables $\Theta_{i/\beta}$ for $\partial_x^\beta\Theta_i$ and $g_{ij/\alpha}$ for $\partial_x^\alpha g_{ij}$
where $\alpha$ and $\beta$ are multi-indexes. We set $g_{ij/\alpha}=g_{ji/\alpha}$ but
otherwise introduce no relations other than
$$
g_{ij}=\delta_{ij},\quad g_{ij/\alpha}=0\quad\text{for}\quad|\alpha|=1,\quad\text{and}\quad g_{ij/\alpha}=g_{ji/\alpha}\,.
$$

Let $\tilde{\mathcal{P}}_m$ be the polynomial algebra in the formal variables $\{g_{ij/\alpha},\Theta_{i/\beta}\}$.
If $P\in\tilde{\mathcal{P}}_m$ and if $X$ is a system of local coordinates centered at $x_0$ satisfying the normalizations
$g_{ij}(X,g)(x_0)=\delta_{ij}$ and $g_{ij/\alpha}(X,g)(x_0)=0$ if $|\alpha|=1$, then we can
evaluate $P(X,g,\Theta)(x_0)$; we say that $P$ is {\it invariant} if $P(X,g,\Theta)(x_0)=P(g,\Theta)(x_0)$
is independent of the particular local coordinate system $X$ which was chosen.
To be consistent with the notation established in Section~\ref{S3.2},
we define
\begin{equation}\label{E4.a}
\operatorname{order}\{\Theta_{i/\beta}\}:=|\beta|+1\quad\text{and}\quad\operatorname{order}\{g_{ij/\alpha}\}:=|\alpha|\,.
\end{equation}
For example, the curvature tensor $R_{ijkl}$ has order 2 when expressed in terms of the derivatives of the metric;
it is linear in the 2-jets and quadratic in the 1-jets of the metric.
Let $\tilde{\mathcal{P}}_{m,n}$ be the finite dimensional vector space of all invariant polynomials
in these variables which are homogeneous of total order $n$. 

By expressing covariant differentiation in terms of the Christoffel symbols and then using the Koszul formula,
we can express the covariant derivatives of the curvature and of $\Theta$ in terms of ordinary partial derivatives.
Conversely, of course, were we to restrict to geodesic coordinates, then we could express the partial derivatives of the curvature
and of $\Theta$ covariantly. Thus we can identify $\mathcal{P}_{m,n}$ and $\tilde{\mathcal{P}}_{m,n}$. 

The variables $\Theta_{i/\beta}$ and $g_{ij/\alpha}$ are algebraically independent; Taylor's Theorem show that
there are no ``hidden" relations such as there are with the covariant derivatives of the curvature tensor and of $\Theta$.
To count the number of times an index $\mu$ appears in a variable, we set
$$
\deg_\mu(\Theta_{i/\beta})=\delta_{i\mu}+\beta(\mu)\quad\text{and}\quad
\deg_\mu(g_{ij/\alpha})=\delta_{i\mu}+\delta_{j\mu}+\alpha(\mu)\,.
$$
The restriction
map $\tilde r:\tilde{\mathcal{P}}_{m,n}\rightarrow\tilde{\mathcal{P}}_{m-1,n}$ is then well defined and induced
by the algebraic map defined on the generators by
\begin{eqnarray*}
&&\tilde r(\Theta_{i/\beta})=\left\{\begin{array}{ll}\Theta_{i/\beta}&\text{if }\deg_m(\Theta_{i/\beta})=0\\
0&\text{if }\deg_m(\Theta_{i/\beta})>0\end{array}\right\}\,,\\
&&\tilde r(g_{ij/\alpha})=\left\{\begin{array}{ll}g_{ij/\alpha}&\text{if }\deg_m(g_{ij/\alpha})=0\\
0&\text{if }\deg_m(g_{ij/\alpha})>0\end{array}\right\}\,.
\end{eqnarray*}
Since the map $\tilde r$ on $\tilde{\mathcal{P}}_{m,n}$ is well defined and since $\tilde r$ agrees with $r$ under
the identification of $\tilde{\mathcal{P}}_{m,n}$ with $\mathcal{P}_{m,n}$, we obtain $r$ is well defined as well. 
\end{proof}

\subsection{The kernel of the restriction map} 
To simplify the notation, we shall identify $r=\tilde r$ and $\tilde{\mathcal{P}}_{m,n}=\mathcal{P}_{m,n}$ henceforth.
Let $\mathcal{Q}_{m,n}$ be the linear subspace of $\mathcal{P}_{m,n}$
generated by the $g_{ij/\alpha}$ variables, i.e. where we set $\Theta=0$. We refer to \cite{G73} for the proof of the following
result characterizing the Euler form defined in Equation~(\ref{E1.f}); it was central to the heat equation proof of the 
Chern-Gauss-Bonnet theorem given there. 

\begin{lemma}\label{L4.2} Let $n$ be even.
\begin{enumerate} 
\item If $n<m$, then $\ker(r:\mathcal{Q}_{m,n}\rightarrow\mathcal{Q}_{m-1,n})=\{0\}$.
\item If $n=m$, then $\ker(r:\mathcal{Q}_{m,n}\rightarrow\mathcal{Q}_{m-1,n})=\operatorname{Span}\{\mathcal{E}_m\}$.
\end{enumerate}\end{lemma}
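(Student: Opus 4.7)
The plan is to combine a simple permutation-invariance observation with H.~Weyl's count of distinct indices in a monomial. The key reformulation is that $r$ annihilates exactly those monomials containing the index $m$, so $P\in\ker(r)$ is equivalent to saying every monomial with nonzero coefficient in $P$ uses the index $m$. Since $P$ is $O(m)$-invariant it is in particular $S_m$-invariant under permutation of index values. For any $j\in\{1,\dots,m-1\}$, apply the transposition $(j,m)$: if some monomial $A$ in $P$ omits the index $j$, then $(j,m)\cdot A$ is a monomial that omits $m$ and appears with the same coefficient as $A$, contradicting $P\in\ker(r)$. Hence every monomial of a kernel element must use every one of $\{1,\dots,m\}$.

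Next I count distinct indices per monomial. A monomial in $\mathcal{Q}_{m,n}$ is a product $\prod_{s=1}^k g_{i_s j_s/\alpha_s}$ with $|\alpha_s|\ge 2$ (since the $1$-jets are normalized to zero) and $\sum_s|\alpha_s|=n$, so $k\le n/2$. By H.~Weyl's first theorem of invariants for $O(m)$, each distinct summation index appears in exactly two slots, so the number of distinct indices is $(2k+n)/2=k+n/2\le n$. Combining with the previous step, a nonzero kernel element forces $m\le n$, which proves part~(1).

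For part~(2), set $n=m$. The inequalities are now saturated: $k=m/2$ and $|\alpha_s|=2$ for every $s$. Each monomial is thus a product of $m/2$ second jets $g_{ij/kl}$, which at $x_0$ (with vanishing $1$-jets) are expressed in terms of curvature components $R_{ikjl}$ via the standard normal-coordinate identity. Hence $\ker(r)$ consists of $O(m)$-invariant degree-$m/2$ polynomials in $R_{ijkl}$ that use every index in $\{1,\dots,m\}$ exactly twice — precisely the combinatorial structure of the Pfaffian of the $\mathfrak{so}(m)$-valued curvature 2-form. The symmetries of $R_{ijkl}$ together with the first Bianchi identity show this space is 1-dimensional, spanned by the Euler form $\mathcal{E}_m$ of Equation~(\ref{E1.f}). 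That $\mathcal{E}_m\in\ker(r)$ is immediate from its defining antisymmetrization: in $\sigma(I,J)$ with $|I|=|J|=m$, restricting the summation range to $\{1,\dots,m-1\}$ forces a repeated index and kills the sign.

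The principal obstacle is the one-dimensionality claim in part~(2). The permutation-plus-counting argument in part~(1) and the first half of (2) is essentially formal; the substantive content lies in the Pfaffian identification, which requires careful bookkeeping of the curvature symmetries and the first Bianchi identity to check that all curvature monomials of the prescribed combinatorial type reduce, modulo these relations, to a single generator. This identification is classical (it is the algebraic backbone of Chern's intrinsic proof of Gauss--Bonnet), but it is where the real work lies.
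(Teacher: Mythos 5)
Part~(1) of your proposal is essentially sound and runs parallel to the counting the paper itself uses when it proves the generalization (Lemma~\ref{L4.4}); note that for Lemma~\ref{L4.2} itself the paper offers no proof but refers to \cite{G73}. One repair is needed in your counting: the appeal to Weyl's first theorem to assert that in each \emph{monomial} ``each distinct summation index appears in exactly two slots'' is false as stated, because expanding a complete contraction into monomials in the variables $g_{ij/\alpha}$ produces terms in which a single index value occupies four or more slots (e.g.\ the monomial $R_{1221}R_{1221}$ occurring in $\tau^2$). What you actually need, and what is true, is that each index value occurs an even number of times, hence at least twice when it occurs at all; this follows immediately from invariance under the coordinate change $x^i\mapsto -x^i$ (the parity argument the paper uses in Lemma~\ref{L4.4}), and with that substitution your estimate $m\le\frac12(2k+n)\le n$ goes through.

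The genuine gap is in part~(2), where the two decisive steps are asserted rather than proved. First, saturation does force each monomial to be a product of $m/2$ second jets $g_{ij/kl}$, but with only the normalizations actually imposed ($g_{ij}(x_0)=\delta_{ij}$, $g_{ij/k}(x_0)=0$; the paper explicitly imposes no further normalizations, so the coordinates are not geodesic) the $2$-jets at $x_0$ are \emph{not} functions of the curvature: $R_{ijkl}(x_0)$ is only a particular antisymmetrized combination of them, and the remaining (totally symmetric) part of the $2$-jet is removed only by invariance under coordinate changes with nontrivial $3$-jet, which is an argument you have not supplied. Second, and more seriously, the claim that ``the symmetries of $R_{ijkl}$ together with the first Bianchi identity show this space is $1$-dimensional, spanned by $\mathcal{E}_m$'' is precisely the substantive content of the lemma -- it is the main theorem of \cite{G73} -- and your proposal gives no argument for it beyond labelling it classical. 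Already for $m=4$ one must show that the only combination $a\tau^2+b\|\rho\|^2+c\|R\|^2$ annihilated by $r$ is a multiple of $\tau^2-4\|\rho\|^2+\|R\|^2$, which requires identifying exactly which curvature relations hold universally in dimension $3$; in general dimension this one-dimensionality statement is, as you yourself note, ``where the real work lies'' -- but acknowledging the work is not doing it. The easy inclusion $\mathcal{E}_m\in\ker(r)$ is verified correctly; the reverse inclusion, which is the lemma, is missing.
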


\begin{remark}\rm In algebraic topology, the Euler class associates to each $m$ dimensional oriented vector bundle $V$
an element of $H^m(M)$; this class is ``unstable" in the sense that the class vanishes if $V$ decomposes in the form $V=W\oplus 1$, i.e.
if $V$ admits a global non-vanishing section. This is, of course, closely related to the fact that $\chi(M)=0$ if $M$ admits a global
non-vanishing vector field. Lemma~\ref{L4.2} is the reflection of this ``instability" on the algebraic level; the Euler form vanishes
if the Riemannian metric is locally an isometric product with a flat factor.
\end{remark}

We generalize Lemma~\ref{L4.2} to the setting at hand.

\begin{lemma}\label{L4.4}
\ \begin{enumerate} 
\item If $2k<m$, then $\ker(r:\mathcal{P}_{m,2k}\rightarrow\mathcal{P}_{m-1,2k})=\{0\}$.
\item If $2k=m$, then $\ker(r:\mathcal{P}_{m,2k}\rightarrow\mathcal{P}_{m-1,2k})=\operatorname{Span}\{\mathcal{E}_m\}$.
\end{enumerate}\end{lemma}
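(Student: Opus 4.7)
\medskip\noindent\textbf{Proof plan.} The plan is to take Lemma~\ref{L4.2} as a black box and treat the genuine $\Theta$-dependence of $P$ by a combinatorial index-counting argument rooted in $O(m)$-invariance.

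First I would decompose $\mathcal{P}_{m,2k}=\bigoplus_{j\ge0}\mathcal{P}_{m,2k}^{(j)}$ according to the total degree $j$ in the $\Theta$-variables, so that $P=\sum_{j\ge0}P_j$. Since the restriction map $r$ acts separately on each formal variable and sends $\Theta_{i/\beta}$ either to itself or to zero, it preserves the $j$-grading; hence $P\in\ker(r)$ iff $P_j\in\ker(r)$ for every $j$. The component $P_0$ lies in $\mathcal{Q}_{m,2k}$ and Lemma~\ref{L4.2} applies directly, giving $P_0=0$ when $2k<m$ and $P_0\in\operatorname{Span}\{\mathcal{E}_m\}$ when $2k=m$. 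It then remains to show $P_j=0$ for every $j\ge1$ under the hypothesis $2k\le m$.

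For this, I would extract constraints on the monomial support of $P_j$ from $O(m)$-invariance. Expand $P_j$ in the natural basis of products of formal variables with specific index values, and for a monomial $M$ let $n_\mu(M)$ be the total number of times $\mu\in\{1,\dots,m\}$ occurs across the index slots of $M$. The diagonal torus $\{\pm1\}^m\subset O(m)$ acts on $M$ by the scalar $\prod_\mu\epsilon_\mu^{n_\mu(M)}$, so since distinct basis monomials occupy distinct joint eigenspaces, invariance of $P_j$ forces $n_\mu(M)$ to be even for every $\mu$. The symmetric group $S_m\subset O(m)$ acts by permuting index values and hence permutes basis monomials, so the support of $P_j$ (with coefficients) is closed under this action. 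If in addition $P_j\in\ker(r)$, then every basis monomial $M$ has some variable involving the value $m$, i.e.\ $n_m(M)\ge1$; combining this with $S_m$-closure yields $n_\mu(M)\ge1$ for every $\mu$, and by parity $n_\mu(M)\ge2$ for every $\mu\in\{1,\dots,m\}$.

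The numerical contradiction then goes as follows. A monomial $M$ with $c$ factors $g_{ij/\alpha}$ (each of weight $|\alpha|\ge2$ and contributing $|\alpha|+2$ index slots) and $j\ge1$ factors $\Theta_{i/\beta}$ (each of weight $|\beta|+1$ and contributing $|\beta|+1$ index slots) has total weight $w_C+w_\Theta=2k$, total index count $w_C+2c+w_\Theta=2k+2c$, and satisfies $w_C\ge 2c$ and $w_\Theta\ge j$. The lower bound $\sum_\mu n_\mu(M)\ge2m$ forces $c\ge m-k$, whence
$$
2k=w_C+w_\Theta\ge2c+j\ge2(m-k)+j,
$$
i.e.\ $j\le4k-2m$. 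When $2k\le m$ the right-hand side is non-positive, contradicting $j\ge1$; hence $P_j=0$ for all $j\ge1$ and only $P_0$ survives, yielding both assertions via Lemma~\ref{L4.2}.

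The part I expect to be delicate is the passage from abstract $O(m)$-invariance of $P_j$ to the concrete support and parity statements at the monomial level. This uses in an essential way that, after the normalizations $g_{ij}(x_0)=\delta_{ij}$ and $g_{ij/\alpha}(x_0)=0$ for $|\alpha|=1$, the remaining formal variables $\{g_{ij/\alpha}:|\alpha|\ge2\}\cup\{\Theta_{i/\beta}\}$ are algebraically independent modulo only the obvious symmetries $g_{ij/\alpha}=g_{ji/\alpha}$ and the multi-index symmetries in $\alpha,\beta$, all of which are compatible with the $O(m)$-action by permutation and sign change of indices.
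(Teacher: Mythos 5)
Your argument is correct and is essentially the paper's own proof: both rest on the same monomial counting (evenness and positivity of each $\deg_\mu$ via sign changes and index permutations, the normalization $|\alpha_i|\ge 2$, and the weight/index-slot inequalities), followed by reduction to Lemma~\ref{L4.2} once the $\Theta$-variables are excluded. Your grading by $\Theta$-degree and the explicit bound $j\le 4k-2m$ merely repackage the paper's ``all inequalities become equalities, hence $p=0$'' step in the borderline case $2k=m$.
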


\begin{proof}Let $0\ne P\in\mathcal{P}_{m,2k}$. Assume that $r(P)=0$. Let
$$
A=g_{i_1j_1/\alpha_1}\dots g_{i_\ell j_\ell/\alpha_\ell}\Theta_{k_1/\beta_1}\dots\Theta_{k_p/\beta_p}\,.
$$
be a monomial of $P$. There need not be any $g_{ij/\alpha}$ variables and in this instance, we take $\ell=0$.
Similarly, there need not be any $\Theta_{k/\beta}$ variables and in this instance, we take $p=0$. Then $r(A)=0$ so $\deg_m(A)\ne0$. Since this is true for every monomial of $P$, we can
permute the coordinate indices to conclude $\deg_i(A)\ne0$ for $1\le i\le m$. By replacing $x^i$ by $-x^i$, we conclude
$\deg_i(A)$ is even and hence $\deg_i(A)\ge2$ for $1\le i\le m$.
We normalized the coordinate system so $g_{ij}(x_0)=\delta_{ij}$ and $g_{ij/k}(x_0)=0$. Consequently,
$2\le|\alpha_i|$ for all $i$. Using the definition of order given in Equation~(\ref{E4.a}), we have that:
\begin{equation}\label{E4.b}
2\ell\le\sum_{i=1}^\ell|\alpha_i|\quad\text{and}\quad
2k=\operatorname{order}(A)=\sum_{i=1}^\ell|\alpha_i|+\sum_{j=1}^p(1+|\beta_j|)\,.
\end{equation}
Since $\deg_\mu(A)\ge2$ for $1\le\mu\le m$, we can apply Equation~(\ref{E4.b}) to estimate
\begin{equation}\label{E4.c}\begin{array}{lll}
2m&\le&\displaystyle\sum_{\mu=1}^m\deg_\mu(A)=2\ell+\sum_{i=1}^\ell|\alpha_i|+\sum_{j=1}^p(1+|\beta_j|)\\
&\le&\displaystyle\sum_{i=1}^\ell 2|\alpha_i|+2\sum_{j=1}^p(1+|\beta_j|)=4k\,.
\end{array}\end{equation}
We conclude therefore that $2k\ge m$ or, equivalently, $r(P)=0$ and $2k<m$ implies $P=0$. This proves Assertion~1. If $m=2k$, then all the inequalities must have
been equalities. Consequently
$$
\sum_{j=1}^p(1+|\beta_p|)=2\sum_{j=1}^p(1+|\beta_p|)
$$
and thus $p=0$. This implies that $P\in\mathcal{Q}_{m,n}$; Assertion~(2) now follows from Lemma~\ref{L4.2}.
\end{proof}

\subsection{An extension to manifolds with boundary}
We generalize Lemma~\ref{L4.2} to the setting
of manifolds with boundary to obtain an axiomatic characterization of the invariants $Q_{k,m}$ given in Equation~(\ref{E1.i}).
Our first step is to normalize the coordinates suitably.
\begin{lemma}\label{L4.5} Fix a point $y_0$ of the boundary. Let indices $a$, $b$, and $c$ range from 1 thru $m-1$.
We can choose local coordinates $\vec y=(y^1,\dots,y^m)$ centered at $y_0$ where $y^m\ge0$ so that
the conditions are satisfied.
\begin{enumerate}
\item $\partial M=\{\vec y:y^m=0\}$ and $\partial_{y^m}$ is the inward unit normal on the boundary.
\item $g_{ab}(y_0)=\delta_{ab}$ and $g_{ab/c}(y_0)=0$.
\item $g_{mm}\equiv1$, $g_{am}\equiv0$, and $L_{ab}(y_0)=-\textstyle\frac12g_{ab/m}(y_0)$.
\end{enumerate}
\end{lemma}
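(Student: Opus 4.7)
The plan is to construct boundary normal (Fermi) coordinates in the standard two-step fashion: first pick tangential geodesic normal coordinates on the boundary centered at $y_0$, then extend inward along the normal geodesic flow. Item (1) will be forced by the construction, item (2) will follow from the tangential normalization on $\partial M$, and item (3) will follow from the Gauss lemma together with a short Christoffel symbol computation.

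First I would choose local coordinates $(z^1,\dots,z^{m-1})$ on $\partial M$ centered at $y_0$ which are \emph{geodesic normal coordinates} for the induced metric $g|_{\partial M}$. This classical choice ensures that $g_{ab}(y_0)=\delta_{ab}$ and that all first partial derivatives of the induced metric vanish at $y_0$, which will yield the tangential part of item~(2). Next, for each boundary point with coordinates $(z^1,\dots,z^{m-1})$ sufficiently close to $y_0$, let $\gamma_{(z)}(t)$ denote the unit speed geodesic in $M$ with initial point $(z^1,\dots,z^{m-1},0)\in\partial M$ and initial velocity equal to the inward unit normal $\vec\nu$ at that point. Define the coordinate map
\begin{equation*}
(z^1,\dots,z^{m-1},t)\ \longmapsto\ \gamma_{(z)}(t), \qquad t\ge 0,
\end{equation*}
and set $y^a:=z^a$ for $1\le a\le m-1$ and $y^m:=t$. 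By the inverse function theorem this is a local diffeomorphism from a neighborhood of $(0,\dots,0)$ in $\mathbb{R}^{m-1}\times[0,\infty)$ onto a half-neighborhood of $y_0$ in $M$. Item~(1) holds by construction.

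For item~(3), the curve $t\mapsto\gamma_{(z)}(t)$ is by construction a unit speed geodesic with tangent vector $\partial_{y^m}$, hence $g_{mm}\equiv 1$. The Gauss lemma for the normal exponential map then gives $g(\partial_{y^m},\partial_{y^a})\equiv 0$, i.e.\ $g_{am}\equiv 0$, throughout the coordinate neighborhood. From $g_{mm}\equiv 1$ and $g_{am}\equiv 0$ the Koszul formula reduces to
\begin{equation*}
\Gamma^{m}_{ab}(y_0)\ =\ \tfrac12 g^{mm}\bigl(g_{am/b}+g_{bm/a}-g_{ab/m}\bigr)(y_0)\ =\ -\tfrac12 g_{ab/m}(y_0),
\end{equation*}
while the definition $L_{ab}=g(\nabla_{e_a}e_b,e_m)$ with $e_a=\partial_{y^a}$ and $e_m=\partial_{y^m}$ (valid on the boundary after the tangential normalization) gives $L_{ab}(y_0)=\Gamma^{m}_{ab}(y_0)$. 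Combining the two identities yields $L_{ab}(y_0)=-\tfrac12 g_{ab/m}(y_0)$.

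Finally, the only subtle point is item~(2): I must check that the tangential normalization $g_{ab}(y_0)=\delta_{ab}$ and $g_{ab/c}(y_0)=0$ for $1\le a,b,c\le m-1$, which were arranged for the intrinsic boundary metric, persist as statements about the ambient metric $g$ in the extended coordinates. This is immediate: on $\partial M=\{y^m=0\}$ the ambient components $g_{ab}$ coincide with the induced boundary components, so their values and tangential derivatives at $y_0$ agree with those of the boundary metric. Since all three conditions have been checked, the lemma follows. I do not anticipate any real obstacle: the construction is entirely standard, and the only thing to verify carefully is the bookkeeping that the tangential geodesic normalization on $\partial M$ is not spoiled by the subsequent normal extension, which it is not because $\{y^m=0\}$ is preserved by the construction.
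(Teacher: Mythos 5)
Your construction is exactly the paper's: geodesic normal coordinates on the boundary centered at $y_0$, extended by the inward normal exponential map (Fermi/boundary normal coordinates), with the paper simply asserting that the normalizations then follow while you spell out the Gauss-lemma and Christoffel-symbol details. The proposal is correct and matches the paper's proof.
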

\begin{proof}
Choose coordinates $(y^1,\dots,y^{m-1})$ on the boundary which are centered at $y_0$ so that $g_{ab}(y_0)=\delta_{ab}$
and so that $g_{ab/c}(y_0)=0$. Let $\nu$ be the inward unit normal vector field on the boundary. Define local coordinates $(y^1,\dots,y^m)$ near the boundary
by
$$(y^1,\dots,y^m)\rightarrow\exp_{(y^1,\dots,y^{m-1})}(y^m\nu(y^1,\dots,y^{m-1}))\quad\text{for}\quad y^m\ge0\,.
$$
It is then immediate that $M=\{\vec y:y^m\ge0\}$
and $\partial M=\{\vec y:y^m=0\}$. These coordinates are characterized by the fact that
$t\rightarrow(y^1,\dots,y^{m-1},t)$ are unit speed geodesics in $M$ starting at $(y^1,\dots,y^{m-1},0)$ normal to the boundary when $t=0$.
The normalizations of the Lemma now follow.
\end{proof}

The index $m$ is distinguished. Let indices $\{a,b,c,d\}$ range from $1$ thru $m-1$ and index the
coordinate frame $\{\partial_{y^1},\dots,\partial_{y^{m-1}}\}$
for the tangent bundle of the boundary. Let $\mathcal{P}_{m,n}^{\operatorname{bd}}$ be the
finite dimensional vector space of all invariants in the components of the covariant derivatives of the auxiliary closed 1-form $\Theta$,
of the covariant derivatives of the curvature tensor
of the Levi-Civita connection, of the components of the curvature tensor of the Levi-Civita connection, and of the components
of the tangential covariant derivatives (with respect to the Levi-Civita connection of the boundary) of the second fundamental
form; the structure group is $O(m-1)$ and these spaces are non-zero even if $n$ is odd (see Equation~(\ref{E4.d}) below). The spaces
$\tilde{\mathcal{P}}_{m,n}^{\operatorname{bd}}$ are defined similarly by using the variables 
$g_{ij/\alpha}$ and $\Theta_{i/\beta}$ subject to the normalizations of Lemma~\ref{L4.5} and we may identify, as
before, $\mathcal{P}_{m,n}^{\operatorname{bd}}$ with $\tilde{\mathcal{P}}_{m,n}^{\operatorname{bd}}$.
We construct invariants by summing tangential indices. We have, for example,
\begin{equation}\label{E4.d}
\mathcal{P}_{m,1}^{\operatorname{bd}}=\operatorname{Span}\{L_{aa},\Theta_m\}\,.
\end{equation}
There is a small amount of technical fuss involved in defining the restriction
map $r:\mathcal{P}_{m,n}^{\operatorname{bd}}\rightarrow\mathcal{P}_{m-1,n}^{\operatorname{bd}}$.
We have used the last index $m$ for the normal coordinate. So we set $r(A)=0$ if $A$ involves the index `$1$'
and otherwise define $r(A)$ by shifting each index $i\rightarrow i-1$ for $i>1$. Thus, for example,
$$
r(\Theta_{1/3})=0,\quad r(\Theta_{2/3})=\Theta_{1/2},\quad
r(g_{12/34})=0,\quad r(g_{23/34})=g_{12/23}
$$
and the like. 
Geometrically, the restriction $r(A)$ applied to a structure $(M_1,\cdot)$ is simply $A$ applied to the
Cartesian product structure $(S^1\times M_1,\cdot)$ where the structures are flat in the $S^1$ direction. The map $r$
is well defined and surjective.
 We define $\mathcal{Q}_{m,n}^{\operatorname{bd}}\subset\mathcal{P}_{m,n}^{\operatorname{bd}}$
as before by setting $\Theta=0$. Lemma~\ref{L4.4} extends to this situation; we refer to Gilkey~\cite{G75} for the
proof of the following result.
\begin{lemma}\label{L4.6}Let $n$ be arbitrary.
\begin{enumerate}
\item If $n<m-1$, then $\ker\left\{r:\mathcal{Q}_{m,n}^{\operatorname{bd}}\rightarrow\mathcal{Q}_{m-1,n}^{\operatorname{bd}}\right\}=\{0\}$.
\item If $n=m-1$, then $\ker\left\{r:\mathcal{Q}_{m,n}^{\operatorname{bd}}\rightarrow\mathcal{Q}_{m-1,n}^{\operatorname{bd}}\right\}
=\operatorname{Span}\left\{Q_{k,m}\right\}$.
\end{enumerate}\end{lemma}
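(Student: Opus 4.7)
\medskip

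\noindent\textbf{Proof plan for Lemma~\ref{L4.6}.} My plan is to mimic the degree-counting strategy used for Lemma~\ref{L4.4}, but adapted to the boundary setting using the coordinates provided by Lemma~\ref{L4.5}. Because we work inside $\mathcal{Q}^{\operatorname{bd}}_{m,n}$ (i.e.\ $\Theta=0$), the only surviving formal variables are the metric derivatives $g_{ij/\alpha}$. The boundary normalizations $g_{mm}\equiv 1$ and $g_{am}\equiv 0$ kill every $g_{ij/\alpha}$ in which $i$ or $j$ equals $m$, so any monomial is a product of factors of the form $g_{ab/\alpha}$ with $1\le a,b\le m-1$. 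The remaining normalizations $g_{ab}(y_0)=\delta_{ab}$ and $g_{ab/c}(y_0)=0$ force $|\alpha|\ge 1$, and in fact $|\alpha|\ge 2$ whenever $\alpha(m)=0$.

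Let $P\in\mathcal{Q}^{\operatorname{bd}}_{m,n}$ with $r(P)=0$, and let $A=\prod_{k=1}^{\ell} g_{i_kj_k/\alpha_k}$ be a monomial appearing in $P$. Since $r$ annihilates any monomial involving the index $1$, every monomial of $P$ must contain the index $1$. The tangential $O(m-1)$ invariance lets us permute the indices $\{1,\dots,m-1\}$, so each tangential index appears in each monomial. Reflecting $y^a\mapsto -y^a$ (which preserves the boundary and the normal direction) then forces each tangential index to occur an \emph{even} number of times in $A$. Hence
\[
\deg_a(A)\ge 2\quad\text{for every }a=1,\dots,m-1,
\qquad\text{so}\qquad \sum_{a=1}^{m-1}\deg_a(A)\ge 2(m-1).
\]
Classify the factors of $A$ into type $\mathcal{A}$ (those with $\alpha_k(m)=0$, hence $|\alpha_k|\ge 2$) and type $\mathcal{B}$ (those with $\alpha_k(m)\ge 1$, hence $|\alpha_k|\ge 1$). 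Since $i_k,j_k$ are tangential, each factor contributes $2$ to $\sum_a\deg_a(A)$ from the $g$-indices plus $|\alpha_k|-\alpha_k(m)$ from the tangential part of $\alpha_k$. A direct tally gives
\[
n=\operatorname{order}(A)=\sum_k|\alpha_k|\ge 2|\mathcal{A}|+|\mathcal{B}|,
\qquad
\sum_{a=1}^{m-1}\deg_a(A)\ge 4|\mathcal{A}|+2|\mathcal{B}|,
\]
and combining these with the lower bound $2(m-1)$ yields $n\ge m-1$, which is Assertion~(1).

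For Assertion~(2) assume $n=m-1$, so every inequality above is forced to be an equality. This means each type-$\mathcal{A}$ factor satisfies $|\alpha_k|=2$ with $\alpha_k(m)=0$ (two tangential derivatives), each type-$\mathcal{B}$ factor satisfies $|\alpha_k|=1$ with $\alpha_k(m)=1$ (a single normal derivative), and each tangential index $a\in\{1,\dots,m-1\}$ occurs \emph{exactly} twice in $A$. Under the normalizations of Lemma~\ref{L4.5}, the type-$\mathcal{A}$ factors $g_{ab/cd}(y_0)$ contribute precisely the tangential curvature components $R_{acbd}$, while the type-$\mathcal{B}$ factors $g_{ab/m}(y_0)=-2L_{ab}$ contribute second fundamental form components. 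Hence every $P$ in the kernel at weight $n=m-1$ is, at $y_0$, a polynomial expression of total tangential weight $m-1$ in the variables $\{R_{abcd},L_{ab}\}$, with each tangential index appearing exactly twice.

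Finally I would invoke H.~Weyl's first theorem for $O(m-1)$ to reduce such expressions to full tangential contractions $\sigma(I,J)$ of products $R_{a_1a_2b_1b_2}\cdots R_{a_{2k-1}a_{2k}b_{2k-1}b_{2k}}L_{a_{2k+1}b_{2k+1}}\cdots L_{a_{m-1}b_{m-1}}$; the Bianchi identities and the symmetries of $R,L$ collapse the remaining freedom, leaving for each admissible $k$ a one-dimensional space generated by $Q_{k,m}$ as defined in Equation~(\ref{E1.i}). Each $Q_{k,m}$ visibly lies in $\ker(r)$, since on the flat product $S^1\times M_1$ every $R$ or $L$ term with an index in the $S^1$ direction vanishes, forcing the antisymmetrization to vanish after restriction. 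The matching lower-bound count (from the previous paragraph) with the upper-bound dimension count from Weyl's theorem completes the identification of the kernel with $\operatorname{Span}\{Q_{k,m}:0\le 2k\le m-1\}$. The main obstacle will be this last step: the degree-counting bound is cheap, but isolating the precise span $\{Q_{k,m}\}$ requires the careful Weyl-theoretic bookkeeping of how $R$ and $L$ factors can be contracted, and this is exactly the technical content imported from \cite{G75}.
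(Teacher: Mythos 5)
You should first note that the paper itself gives no proof of Lemma~\ref{L4.6}: it is quoted from Gilkey~\cite{G75}, and the paper's own contribution here is Lemma~\ref{L4.7}, whose proof is the counting reduction. Your counting is essentially that argument with $\Theta$ deleted, and it is sound in substance: only $g_{ab/\alpha}$ with tangential $a,b$ survive the normalizations of Lemma~\ref{L4.5}, $|\alpha|\ge2$ unless the factor is $g_{ab/m}$, and $r(P)=0$ together with tangential $O(m-1)$-invariance and the sign flips $y^a\mapsto-y^a$ force $\deg_a(A)\ge2$ for every tangential $a$. One slip: your two displayed inequalities are both lower bounds and do not combine with $\sum_a\deg_a(A)\ge2(m-1)$ to give $n\ge m-1$. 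What you need is the upper bound $\sum_{a=1}^{m-1}\deg_a(A)\le 2\operatorname{order}(A)=2n$, which holds factor by factor (type $\mathcal{A}$: $2+|\alpha|\le2|\alpha|$ since $|\alpha|\ge2$; type $\mathcal{B}$: $2+|\alpha|-\alpha(m)\le2|\alpha|$ since $\alpha(m)\ge1$); this is exactly the form of the estimate used in the proof of Lemma~\ref{L4.7}. With that repair, Assertion~(1) and the critical-case structure (monomials built only from $g_{ab/cd}$ and $g_{ab/m}$, each tangential index occurring exactly twice) are correct, as is the easy inclusion $Q_{k,m}\in\ker(r)$.

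The genuine gap is the final step of Assertion~(2). It is not true that Weyl's first theorem plus the Bianchi identities and the symmetries of $R$ and $L$ collapse the weight-$(m-1)$ invariants built from $k$ tangential curvatures and $m-1-2k$ second fundamental forms to a one-dimensional space for each $k$: already for $k=1$, $m-1=4$ one has independent contractions such as $\tau L_{aa}L_{bb}$, $\tau L_{ab}L_{ab}$, $\rho_{ab}L_{ab}L_{cc}$, $\rho_{ab}L_{ac}L_{bc}$, $R_{abcd}L_{ac}L_{bd}$, all of which have every tangential index appearing exactly twice. The collapse to $\operatorname{Span}\{Q_{k,m}\}$ occurs only after the condition $r(P)=0$ is imposed a second time, now at the level of these curvature/second-fundamental-form contractions, and carrying that out is precisely the nontrivial content of \cite{G75}, which your plan (candidly) imports at the end. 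So your proposal amounts to a correct (after the fix above) reduction in the same style as the paper's proofs of Lemmas~\ref{L4.4} and~\ref{L4.7}, followed by the same citation the paper uses; it does not supply an independent proof of Assertion~(2).
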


The appropriate generalization of Lemma~\ref{L4.4} to this setting becomes:
\begin{lemma}\label{L4.7}Let $n$ be arbitrary.
\begin{enumerate}
\item If $n<m-1$, then $\ker\left\{r:\mathcal{P}_{m,n}^{\operatorname{bd}}\rightarrow\mathcal{P}_{m-1,n}^{\operatorname{bd}}\right\}=\{0\}$.
\item If $n=m-1$, then $\ker\left\{r:\mathcal{P}_{m,n}^{\operatorname{bd}}\rightarrow\mathcal{P}_{m-1,n}^{\operatorname{bd}}\right\}
=\operatorname{Span}\left\{Q_{k,m}\right\}$.
\end{enumerate}\end{lemma}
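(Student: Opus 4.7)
The plan is to adapt the strategy of Lemma~\ref{L4.4} to the boundary setting, then reduce to Lemma~\ref{L4.6}. The structural differences are that the normal index $m$ is distinguished (neither permuted with tangential indices nor subject to parity), $r$ kills any monomial involving the tangential index ``$1$'', and nontrivial $g$-variables are now allowed to have $|\alpha|=1$ (the ``$L$-type'' case). The strategy is to use tangential $O(m-1)$-symmetry to force each monomial of a kernel element to have large degree in every tangential index, and then use a weight-vs-degree inequality to eliminate $\Theta$-variables entirely.

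Let $P\in\mathcal{P}_{m,n}^{\operatorname{bd}}$ satisfy $r(P)=0$. The shift $i\mapsto i-1$ ($i>1$) is injective on those generators avoiding the index $1$, so $r$ maps distinct such monomials to distinct monomials; hence every monomial $A$ of $P$ must satisfy $\deg_1(A)>0$. Applying the permutation group $S_{m-1}$, which acts by symmetry on $\mathcal{P}_{m,n}^{\operatorname{bd}}$, extends this to $\deg_a(A)>0$ for every tangential $a\in\{1,\ldots,m-1\}$. The reflections $y^a\mapsto -y^a$ for $a<m$ preserve the boundary normalization of Lemma~\ref{L4.5}, so parity forces $\deg_a(A)$ even, hence $\deg_a(A)\geq2$ and
\[
\sum_{a=1}^{m-1}\deg_a(A)\;\geq\;2(m-1).
\]

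For the upper bound, write $A=\prod_\iota g_{p_\iota q_\iota/\alpha_\iota}\cdot\prod_j\Theta_{k_j/\beta_j}$ and set $n_g:=\sum_\iota|\alpha_\iota|$ and $n_\Theta:=\sum_j(1+|\beta_j|)$, so $n=n_g+n_\Theta$. The normalization of Lemma~\ref{L4.5} forces every nontrivial $g_{pq/\alpha}$ to have $p,q$ tangential, and forces $\alpha(m)=1$ when $|\alpha|=1$; a direct case check then gives
\[
\sum_{a=1}^{m-1}\deg_a(g_{pq/\alpha})\;=\;2+|\alpha|-\alpha(m)\;\leq\;2|\alpha|,\qquad\sum_{a=1}^{m-1}\deg_a(\Theta_{k/\beta})\;\leq\;1+|\beta|,
\]
where the $g$-bound holds because either $|\alpha|=1$ (and then $\alpha(m)=1$ reduces the left-hand side to exactly $2$) or $|\alpha|\geq2$ (so $2+|\alpha|\leq2|\alpha|$). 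Summing over the factors of $A$ yields
\[
\sum_{a=1}^{m-1}\deg_a(A)\;\leq\;2n_g+n_\Theta\;=\;2n-n_\Theta.
\]

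Combining the two bounds gives $n_\Theta\leq2(n-m+1)$. If $n<m-1$, this forces $n_\Theta<0$, which is impossible, so $P=0$ and Assertion~(1) follows. If $n=m-1$, it forces $n_\Theta=0$, so every monomial of $P$ is free of $\Theta$-variables and $P\in\mathcal{Q}_{m,m-1}^{\operatorname{bd}}$; Lemma~\ref{L4.6}(2) then identifies $P$ with an element of $\operatorname{Span}\{Q_{k,m}\}$, and the reverse inclusion is immediate from the same lemma. The main bookkeeping obstacle is verifying the $g$-variable ratio bound in the $L$-type case $|\alpha|=1$: this case does not arise in the closed setting of Lemma~\ref{L4.4}, and the argument here only closes because the normalization forces the single derivative of $g_{pq/\alpha}$ to be normal, which happens to pull $\sum_a\deg_a$ down to exactly $2$ and preserves the critical ratio~$2$.
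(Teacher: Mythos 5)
Your proof is correct and follows essentially the same route as the paper: use $r(P)=0$ together with tangential permutation and reflection invariance to force $\deg_a(A)\ge 2$ for $1\le a\le m-1$, compare with the per-variable bound on tangential degree versus order (the second fundamental form variables $g_{ab/m}$ being the only first-order $g$-variables), conclude $n\ge m-1$ with no $\Theta$-variables in the critical case, and finish with Lemma~\ref{L4.6}. Your bookkeeping via $n_\Theta\le 2(n-m+1)$ is a slightly cleaner packaging of the paper's ``all inequalities become equalities'' step, but it is the same counting argument.
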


\begin{proof}We give essentially the same proof as that given to prove Lemma~\ref{L4.4}. 
Suppose that $0\ne P\in\mathcal{P}_{m,n}^{\operatorname{bd}}$ satisfies $r(P)=0$.
$$
A=g_{a_1b_1/\alpha_1}\dots g_{a_\ell b_\ell/\alpha_\ell}\Theta_{k_1/\beta_1}\dots\Theta_{k_p/\beta_p}
g_{c_1d_1/m}\dots g_{c_qd_q/m}
$$
be a monomial of $P$. The $\alpha_\mu$ must satisfy $|\alpha_\mu|\ge2$ since
the components of the second fundamental form $g_{ab/m}$ are the only (possibly)
non-zero first derivatives of the metric tensor at $y_0$. Since $r(P)=0$, we have $\deg_a(A)\ge2$ for $1\le a\le m-1$. 
Equation~(\ref{E4.b}) generalizes to  become
$$
2\ell\le\sum_{i=1}^\ell|\alpha_i|\qquad n=\operatorname{order}(A)=\sum_{i=1}^\ell|\alpha_i|+\sum_{j=1}^p(1+|\beta_p|)+q\,.
$$
Since we are not including the index $m$ in the sum, some of the equalities of Equation~(\ref{E4.c}) may become inequalities and we obtain
\begin{eqnarray*}
2(m-1)&\le&\sum_{\mu=2}^m\deg_\mu(A)\le2\ell+\sum_{i=1}^\ell|\alpha_i|+\sum_{j=1}^p(1+|\beta_j|)+2q\\
&\le&\sum_{i=1}^\ell2|\alpha_i|+2\sum_{j=1}^p(1+|\beta_j|)+2q=2n\,.
\end{eqnarray*}
Consequently, if $n<m-1$, $P=0$ while if $n=m-1$, then all of the inequalities must have been equalities. We conclude
as before that $p=0$ so $P\in\mathcal{Q}_{m,m-1}^{\operatorname{bd}}$ and the desired conclusion follows from 
Lemma~\ref{L4.6}.
\end{proof}

\subsection{An equivariant extension} Let $\mathcal{M}$ be a compact Riemannian manifold without boundary. Let $\mathcal{T}$ be
a smooth map of $M$ to itself with $\mathcal{T}^*\Theta=\Theta$. Let $N$ be one component of the fixed point set of $\mathcal{T}$.
We assume the Riemannian metric is chosen so the splitting of Equation~(\ref{E1.k}) is orthogonal. Let $s:=\dim(N)$. 
Let indices $a$, $b$, and $c$ range from $1$ through $s$,
let indices $u$, $v$, and $w$ range from $s+1$ through $m$,
and let indices $i$, $j$, and $k$ range from $1$ through $m$.

\begin{lemma}\label{L4.8}
Let $N$ be a component of the fixed point set of $\mathcal{T}$ of dimension $s$. Assume $\det(I-d\mathcal{T}_\nu)\ne0$ on $N$.
Let $g$ be a Riemannian metric so $T(N)\perp_g\nu(N)$. 
Fix a point $P$ of $N$. There exist local coordinates $(\vec x,\vec y)$ defined near $P$ 
where $\vec x=(x^1,\dots,x^s)$ and $\vec y=(y^{s+1},\dots,y^m)$ so that
\begin{enumerate}
\item $g(\partial_{x^a},\partial_{x^b})(P)=\delta_{ab}$ and $\partial_{x^c}g(\partial_{x^a},\partial_{x^b})(P)=0$.
\item $N=\{(\vec x,\vec y):\vec y=0\}$ and $\nu(N)=\operatorname{Span}\{\partial_{y^{s+1}},\dots,\partial_{y^m}\}|_N$.
\item $g(\partial_{x^a},\partial_{y^u})|_N=0$ and $g(\partial_{y^u},\partial_{y^v})|_N=\delta_{uv}$.
\item $\displaystyle dT|_N=\left(\begin{array}{cc}\id&0\\0&d\mathcal{T}_\nu\end{array}\right)$.
\item Expand $\mathcal{T}(\vec x,\vec y)=(\mathcal{T}^1(\vec x,\vec y),\dots,\mathcal{T}^m(\vec x,\vec y))$. Then
$\partial_{x^b}\mathcal{T}^a(\vec x,0)=\delta_b^a$, \newline$\partial_{y^u}\mathcal{T}^a(\vec x,0)=0$, and $\partial_{x^b}\mathcal{T}^u(\vec x,0)=0$.
\end{enumerate}
\end{lemma}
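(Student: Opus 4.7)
The plan is to build the coordinate system in two stages: first pick Riemannian normal coordinates on $N$ at $P$, then extend to a tubular neighborhood of $N$ using the normal exponential map of $\mathcal{M}$. The orthogonal splitting $TM|_N = TN \oplus \nu(N)$ of Equation~(\ref{E1.k}) is what allows the bookkeeping to work out: it lets us treat $\nu(N)$ as an actual subbundle of $TM|_N$ on which $d\mathcal{T}$ acts as $d\mathcal{T}_\nu$.

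First I would choose geodesic normal coordinates $(x^1,\dots,x^s)$ for the induced Riemannian metric on $N$, centered at $P$; this delivers Assertion~(1) by the usual properties of normal coordinates. Writing $q(\vec x)$ for the point of $N$ with coordinates $\vec x$, I would next pick an orthonormal frame $\{e_{s+1}(P),\dots,e_m(P)\}$ of the fiber $\nu(N)_P$ and extend it to a smooth orthonormal frame $\{e_u\}$ of $\nu(N)$ over a neighborhood of $P$ in $N$ (Gram--Schmidt applied to any smooth frame of the subbundle). Then define
$$
\Phi(\vec x,\vec y):=\exp_{q(\vec x)}\!\Big(\textstyle\sum_u y^u\, e_u(q(\vec x))\Big),
$$
using the Riemannian exponential of $\mathcal{M}$. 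Since $\{\partial_{x^a}|_P\}\cup\{e_u(P)\}$ is a basis of $T_PM$ and the second factor is injective on a neighborhood of the zero section of $\nu(N)$, $\Phi$ is a diffeomorphism near $P$; take $(\vec x,\vec y):=\Phi^{-1}$.

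Assertions~(2) and~(3) then drop out immediately: the zero section pulls back to $\{\vec y=0\}=N$, the coordinate vectors $\partial_{y^u}|_N$ equal $e_u$ by construction (so they form an orthonormal frame of $\nu(N)$), and $\partial_{x^a}|_N\in TN$ is orthogonal to $\partial_{y^u}|_N\in\nu(N)$ by the assumed orthogonality of the splitting. For Assertion~(4), since $\mathcal{T}$ fixes $N$ pointwise, $d\mathcal{T}|_N$ acts as the identity on $TN=\operatorname{Span}\{\partial_{x^a}\}|_N$; and $d\mathcal{T}$ preserves the canonical splitting~(\ref{E1.k}) (that is the whole content of identifying $\nu(N)$ with the non-$(+1)$ generalized eigenspace of $d\mathcal{T}$), so it acts on $\nu(N)=\operatorname{Span}\{\partial_{y^u}\}|_N$ as $d\mathcal{T}_\nu$. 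Assertion~(5) is then a direct consequence: from $\mathcal{T}(\vec x,0)=(\vec x,0)$ one reads off $\partial_{x^b}\mathcal{T}^a(\vec x,0)=\delta^a_b$ and $\partial_{x^b}\mathcal{T}^u(\vec x,0)=0$, while $\partial_{y^u}\mathcal{T}^a(\vec x,0)=0$ is exactly the statement that $d\mathcal{T}|_N$ sends $\partial_{y^u}|_N$ into the $\vec y$-directions, which is Assertion~(4) again.

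Honestly, there is no serious obstacle: each of the five conditions is by design a coordinate-version of a geometric fact (respectively, normal coordinates on $N$; the normal exponential map straightens out $N$; orthogonality of the splitting; $\mathcal{T}|_N=\operatorname{id}$ plus invariance of the splitting; and Taylor expansion of these). The only point that takes care is the simultaneous realization of (1) and (3)--(5) in the \emph{same} chart; this is handled by building $\Phi$ from data adapted to $N$ (normal coordinates on $N$, orthonormal frame of $\nu(N)$, and the metric exponential), at which point all five items hold at once.
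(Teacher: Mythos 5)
Your construction is exactly the paper's: geodesic normal coordinates on $N$ at $P$ together with the normal exponential map $(\vec x,\vec y)\mapsto\exp_{q(\vec x)}(y^ue_u)$ built from an orthonormal frame $\{e_u\}$ of $\nu(N)$, with Assertions (2)--(5) read off from the orthogonality of the splitting of Equation~(\ref{E1.k}), its $d\mathcal{T}$-invariance, and $\mathcal{T}|_N=\id$. The argument is correct and simply fills in details the paper leaves implicit.
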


\begin{proof} Choose local coordinates $\vec x=(x^1,\dots,x^s)$ for $N$ near $P$ so that 
the normalizations of Assertion~1 hold. Let $\{e_{s+1},\dots,e_m\}$ be a local orthonormal frame for $\nu(N)$ near $P$. 
The map $(\vec x,\vec y)\rightarrow\exp_{\vec x}(y^ue_u)$
defines local coordinates on $M$ near $P$ so that the remaining assertions hold.
\end{proof}
We normalize the coordinate system henceforth so that Lemma~\ref{L4.8} holds but we impose no other conditions.
Modulo a normalizing constant of $-\frac12$, the $g_{ab/u}$ variables give the components of the second fundamental form.
Let $/\alpha_1,\alpha_2:=\partial_x^{\alpha_1}\partial_y^{\alpha_2}$ where the $\alpha_i$ are suitable multi-indices.
Taking into account the normalizations of Lemma~\ref{L4.8} we introduce variables for the remaining derivatives of the metric, of $\Theta$, and of $\mathcal{T}$ (taking into account the fact that one index is up) and define the order by setting:
\begin{enumerate}
\item $g_{ab/u}$; $\operatorname{order}(g_{ab/u})=1$. These variables are tensorial.
\item $g_{uv/w}$; $\operatorname{order}(g_{uv/w})=1$.
\item $g_{au/v}$; $\operatorname{order}(g_{au/v})=1$.
\item $g_{ij/\alpha_1,\alpha_2}$ for $|\alpha_1|+|\alpha_2|\ge2$; $\operatorname{order}(g_{ij/\alpha_1,\alpha_2})=|\alpha_1|+|\alpha_2|$.
\item $\Theta_{i/\alpha_1,\alpha_2};$ $\operatorname{order}(\Theta_{i/\alpha_1,\alpha_2})=1+|\alpha_1|+|\alpha_2|$.
\item $\mathcal{T}^u_v:=\partial_{y^v}\mathcal{T}^u$; $\operatorname{order}(\mathcal{T}^u_v)=0$. 
These variables are tensorial and give $d\mathcal{T}_\nu$.
\item $\mathcal{T}^a_{/\alpha_1,\alpha_2}$ for $|\alpha_2|\ge2$; $\operatorname{order}(\mathcal{T}^a_{/\alpha_1,\alpha_2})=|\alpha_1|+|\alpha_2|-1$.
\item $\mathcal{T}^u_{/\alpha_1,\alpha_2}$ for $|\alpha_1|+|\alpha_2|\ge2$ and $|\alpha_2|\ge1$; $\operatorname{order}(\mathcal{T}^u_{/\alpha_1,\alpha_2})=|\alpha_1|+|\alpha_2|-1$.
\end{enumerate}

A word of explanation is in order. We have ruled out $g_{ab/c}$ since we normalized the coordinate system so this vanishes at $P$;
hence this variable is not present. We have also ruled out $g_{au/b}$ and $g_{uv/a}$ since $g_{au}\equiv0$ and $g_{uv}\equiv\delta_{uv}$ on $N$.
In considering the
$\mathcal{T}^a_{/\alpha_1,\alpha_2}$ variables we require $|\alpha_2|\ge2$ since $\mathcal{T}^a(x,0)\equiv x^a$ and
$\partial_{y^u}\mathcal{T}^a(x,0)\equiv0$
on $N$.
Finally, in considering $T^u_{/\alpha_1,\alpha_2}$, we assume $|\alpha_1|+|\alpha_2|\ge2$ and $|\alpha_2|\ge1$ since we have already introduced the
$\mathcal{T}^u_v$ variables and since $\partial_{x^a}\mathcal{T}^u\equiv0$ on $N$.
As was done previously, $\operatorname{deg}_\mu$ counts the number of times an index $\mu$ appears in the variables given above.

\begin{lemma}\label{L4.9}\ 
\begin{enumerate}
\item $\displaystyle\sum_{c=1}^s\operatorname{deg}_c(g_{ab/u})=2\operatorname{order}(g_{ab/u})$.
\item $\displaystyle\sum_{c=1}^s\operatorname{deg}_c(g_{uv/w})<2\operatorname{order}(g_{uv/w})$ and
$\displaystyle\sum_{c=1}^s\operatorname{deg}_c(g_{au/v})<2\operatorname{order}(g_{au/v})$.
\item $\displaystyle\sum_{c=1}^s\operatorname{deg}_c(g_{ij/\alpha_1,\alpha_2})\le2\operatorname{order}(g_{ij/\alpha_1,\alpha_2})$. If equality holds, then
this variable contains no normal indices and $|\alpha_1|=2$.
\item $\displaystyle\sum_{c=1}^s\operatorname{deg}_c(\Theta_{i/\alpha_1,\alpha_2})<2\operatorname{order}(\Theta_{i/\alpha_1,\alpha_2})$.
\item $\displaystyle\sum_{c=1}^s\operatorname{deg}_c(\mathcal{T}^a_{/\alpha_1,\alpha_2})<2\operatorname{order}(\mathcal{T}^a_{/\alpha_1,\alpha_2})$
and\newline $\displaystyle\sum_{c=1}^s\operatorname{deg}_c(\mathcal{T}^u_{/\alpha_1,\alpha_2})<2\operatorname{order}(\mathcal{T}^u_{/\alpha_1,\alpha_2})$.
\end{enumerate}\end{lemma}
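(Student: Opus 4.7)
The plan is a direct index count, verifying each of the five assertions by tabulating, for every admissible variable $V$, the quantity $\sum_{c=1}^{s}\deg_{c}(V)$ (which is simply the total number of tangential indices carried by $V$) and comparing it with $2\operatorname{order}(V)$ as dictated by items (1)--(8) above. The calculations are essentially bookkeeping once one makes the following general observation: normal indices and normal derivative slots $\alpha_{2}$ contribute $0$ to $\sum_c\deg_c$ but each raises $\operatorname{order}$, so every normal slot creates a gap of at least 2 on the right-hand side of the inequality, while every tangential slot keeps the two sides in step.

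For assertions (1) and (2) the count is immediate: $g_{ab/u}$ has exactly two tangential lower indices against order 1, giving equality, whereas $g_{uv/w}$ and $g_{au/v}$ carry at most one tangential lower index against order 1. For assertion (3) the tangential count of $g_{ij/\alpha_{1},\alpha_{2}}$ is bounded by $2+|\alpha_{1}|$, and twice its order is $2|\alpha_{1}|+2|\alpha_{2}|$; the gap is $|\alpha_{1}|+2|\alpha_{2}|-($tangential count from $i,j)$, which is nonnegative because $|\alpha_{1}|+|\alpha_{2}|\ge 2$, and equality forces both $i,j$ tangential together with $|\alpha_{1}|+2|\alpha_{2}|=2$; the side condition $|\alpha_{1}|+|\alpha_{2}|\ge 2$ then isolates the unique solution $|\alpha_{1}|=2$, $|\alpha_{2}|=0$. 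For assertion (4) the extra ``+1'' in $\operatorname{order}(\Theta_{i/\alpha_{1},\alpha_{2}})$ produces a slack of at least $1$ in every case. For assertion (5), the admissibility hypotheses $|\alpha_{2}|\ge 2$ (for $\mathcal{T}^{a}$) and $|\alpha_{1}|+|\alpha_{2}|\ge 2$ with $|\alpha_{2}|\ge 1$ (for $\mathcal{T}^{u}$) guarantee enough normal slots to beat the single tangential contribution from the raised $a$-index of $\mathcal{T}^{a}$, respectively the pure tangential-derivative contribution for $\mathcal{T}^{u}$.

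I do not expect a genuine obstacle, but three bookkeeping points must be handled with care. First, one must remember that the normalizations of Lemma~\ref{L4.8}—$g_{au}\equiv 0$ and $g_{uv}\equiv\delta_{uv}$ along $N$, $\mathcal{T}^{a}(\vec x,0)\equiv x^{a}$, and $\partial_{x^{b}}\mathcal{T}^{u}(\vec x,0)\equiv 0$—forbid the appearance of certain ``borderline'' variables (such as $g_{au/b}$, $g_{uv/a}$, $\mathcal{T}^{a}_{/\alpha_{1},e_{u}}$ with $|\alpha_{2}|=1$, and $\mathcal{T}^{u}_{/\alpha_{1},0}$) that would otherwise saturate or violate the inequalities; our list of admissible variables is precisely what is left after these normalizations are imposed. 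Second, for the $\mathcal{T}^{\cdot}$-variables one must correctly account for the $-1$ shift in order that originates from declaring $\mathcal{T}^{u}_{v}$ to have order 0. Third, the raised tangential index on $\mathcal{T}^{a}$ must be counted toward $\sum_{c}\deg_{c}$ (it contributes $1$), while the raised normal index on $\mathcal{T}^{u}$ contributes $0$; conflating the two is the only real pitfall, and once it is avoided the strictness in (5) falls out of the admissibility conditions.
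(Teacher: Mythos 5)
Your proposal is correct and follows essentially the same route as the paper: a variable-by-variable comparison of $\sum_{c=1}^{s}\deg_c$ with $2\operatorname{order}$, using the admissibility restrictions coming from the normalizations of Lemma~\ref{L4.8} exactly where the paper does, and your bounds $2+|\alpha_1|$, $1+|\alpha_1|$, $|\alpha_1|+1$, $|\alpha_1|$ and the equality analysis in case (3) match the paper's computation. (Your opening heuristic that every normal slot forces a gap is stated a bit loosely, since $g_{ab/u}$ attains equality, but your actual case checks are the ones that matter and they are right.)
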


\begin{proof} We note $\operatorname{ord}(g_{ij/k})=1$. We prove Assertions~1 and 2 by computing:
\begin{eqnarray*}
&&\displaystyle\sum_{c=1}^s\operatorname{deg}_c(g_{ab/u})=2=2\operatorname{order}(g_{ab/u}),\\
&&\displaystyle\sum_{c=1}^s\operatorname{deg}_c(g_{uv/w})=0<2=2\operatorname{order}(g_{ab/u}),\\
&&\displaystyle\sum_{c=1}^s\operatorname{deg}_c(g_{av/w})=1<2=2\operatorname{order}(g_{av/w})\,.
\end{eqnarray*}
Since $|\alpha_1|+|\alpha_2|\ge2$, we may compute
\begin{eqnarray*}
&&\displaystyle\sum_{c=1}^s\operatorname{deg}_c(g_{ij/\alpha_1,\alpha_2})\le 2+|\alpha_1|\le|\alpha_1|+|\alpha_2|+|\alpha_1|\\
&&\qquad\qquad\le 2(|\alpha_1|+|\alpha_2|)=2\operatorname{order}(g_{ij/\alpha_1,\alpha_2})\,.
\end{eqnarray*}
If equality holds, $i$ and $j$ are tangential indices, $|\alpha_1|=2$, and $|\alpha_2|=0$. Assertion~3 follows.
To prove Assertion~4, we compute:
$$
\sum_{c=1}^s\deg_{c}(\Theta_{i/\alpha_1,\alpha_2})\le1+|\alpha_1|<2+2|\alpha_1|+2|\alpha_2|
=2\operatorname{order}(\Theta_{i/\alpha_1,\alpha_2})\,.
$$
We use the normalizations of (7) and (8) above to complete the proof by computing:
\medbreak\qquad
$\displaystyle\sum_{c=1}^s\operatorname{deg}_c(\mathcal{T}^a_{/\alpha_1,\alpha_2})=|\alpha_1|+1\le|\alpha_1|+|\alpha_2|-1
<2(|\alpha_1|+|\alpha_2|-1)$
\medbreak$\qquad\qquad=2\operatorname{order}(\mathcal{T}^a_{/\alpha_1,\alpha_2})$
\medbreak\qquad$\displaystyle\sum_{c=1}^s\operatorname{deg}_c(\mathcal{T}^u_{/\alpha_1,\alpha_2})=|\alpha_1|\le|\alpha_1|+|\alpha_2|-1
<2(|\alpha_1|+|\alpha_2|-1)$\medbreak\qquad\qquad$
=2\operatorname{order}(\mathcal{T}^u_{/\alpha_1,\alpha_2})$.
\end{proof}

We let $\mathfrak{P}_{n,m,s}$ be the space of all invariants in these variables of total {order} $n$ with coefficients which are smooth
functions of the $\mathcal{T}_u^v$ variables with the proviso that $\det(\id-\mathcal{T}^u_v)\ne0$. 
We define $r:\mathfrak{P}_{n,m,s}\rightarrow\mathfrak{P}_{n,m,s-1}$ as before.

\begin{lemma}\label{L4.10}Let $0\ne P\in\ker\{r:\mathfrak{P}_{m,n,s}\rightarrow\mathfrak{P}_{m,n,s-1}\}$. Then $n\ge s$. If $n=s$, then
$P=P(d\mathcal{T}_\nu,g_{ab/cd},g_{ab/u})$.
If $\int_NP(d\mathcal{T},g)(x)dx$ is independent of $g$ modulo the requirement that $T(M)|_N\perp_g\nu(N)$, then $P=P(d\mathcal{T}_\nu,g_{ab/cd})$.
\end{lemma}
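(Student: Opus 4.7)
The plan is to extend the monomial-counting strategy of Lemma~\ref{L4.4} and Lemma~\ref{L4.7} to the equivariant setting, now with Lemma~\ref{L4.9} in place of the earlier order-vs.-degree bounds. Fix a nonzero $P\in\ker(r)$ and a monomial $A$ of $P$ in the variables listed before Lemma~\ref{L4.9}, with nonzero coefficient which is a smooth function of the order-zero variables $\mathcal{T}^u_v$ (defined where $\det(\id-\mathcal{T}^u_v)\ne0$).

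For the first two assertions I would run the by-now-familiar symmetry, reflection, and order argument. Since $P$ is invariant under the residual $O(s)$-action permuting the tangential indices $\{1,\dots,s\}$, and $r(P)=0$ forces the coefficient of any monomial omitting the index $s$ to vanish, symmetry gives $\deg_c(A)\ge1$ for every $c\in\{1,\dots,s\}$. The reflection $x^c\to -x^c$ preserves the normalizations of Lemma~\ref{L4.8} and acts on each generator by the sign $(-1)^{\deg_c(\cdot)}$, so invariance forces $\deg_c(A)$ to be even, hence $\deg_c(A)\ge2$. Summing termwise over the factors of $A$ and applying Lemma~\ref{L4.9},
$$
2s\;\le\;\sum_{c=1}^s\deg_c(A)\;\le\;2\operatorname{order}(A)\;=\;2n,
$$
which proves $n\ge s$. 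If $n=s$, every inequality above must be an equality, and Lemma~\ref{L4.9} identifies the only variables that saturate $\sum_{c=1}^s\deg_c(\cdot)=2\operatorname{order}(\cdot)$ as $g_{ab/u}$ and $g_{ab/cd}$ (the latter being the case of $g_{ij/\alpha_1,\alpha_2}$ with $i,j$ tangential, $|\alpha_1|=2$, $|\alpha_2|=0$). Hence $P=P(d\mathcal{T}_\nu,g_{ab/cd},g_{ab/u})$.

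The main obstacle, having no counterpart in the preceding lemmas, is the final conditional statement. Suppose for contradiction that $P$ has a monomial with a factor of some $g_{ab/u}$, and split $P=P_0+P_1$ where $P_0:=P|_{g_{ab/u}=0}$ and $P_1\not\equiv0$. The variables $g_{ab/u}|_N$ encode, up to a constant, the second fundamental form $L$ of $N$ in $M$, which is a tensor on $N$ that can be prescribed independently of the intrinsic metric $g|_N$ and of the orthogonality condition. I would construct a one-parameter family of metrics $g_t$ on $M$ that all satisfy $T(M)|_N\perp_{g_t}\nu(N)$ (so $\mathcal{T}$ still block-decomposes as in Equation~(\ref{E1.k})) and all share the same restriction $g_t|_N=g|_N$ (so $g_{ab/cd}|_N$, a function of the jets of $g$ along $N$, is unchanged), but whose second fundamental forms vary by a prescribed symmetric tensor field $t\chi\phi$ on $N$; in Lemma~\ref{L4.8} coordinates this is effected by adding $t\chi(\vec x)\phi_{ab,u}(\vec x)y^u$ to $g_{ab}$ and re-normalizing if needed. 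Since $P_1$ is polynomial in $g_{ab/u}$ with smooth coefficients in $(\mathcal{T}^u_v,g_{ab/cd})$ that are not identically zero, a suitable choice of $\chi$ and $\phi$ makes $\int_N P(d\mathcal{T},g_t)\,dx$ a nonconstant polynomial in $t$, contradicting the hypothesis of $g$-independence. Therefore $P_1\equiv0$ and $P=P(d\mathcal{T}_\nu,g_{ab/cd})$.
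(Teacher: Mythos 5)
Your treatment of the first two assertions is correct and is essentially the paper's argument: $r(P)=0$ together with permutation of the tangential indices and the reflections $x^c\to-x^c$ forces $\deg_c(A)\ge2$ for $1\le c\le s$ in every monomial $A$, and Lemma~\ref{L4.9} then gives $2s\le\sum_{c=1}^s\deg_c(A)\le2\operatorname{order}(A)=2n$, so $n\ge s$; in the limiting case $n=s$ every factor must saturate the bound, which by Lemma~\ref{L4.9} leaves only $g_{ab/u}$ and $g_{ab/cd}$ besides the order-zero variables $\mathcal{T}^u_v$, exactly as in the paper.

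The final conditional assertion is where your write-up has a genuine gap. Your perturbation $g_{ab}\mapsto g_{ab}+t\,\chi(\vec x)\phi_{ab,u}(\vec x)\,y^u$ is the right one (it preserves $T(N)\perp_g\nu(N)$, fixes the tangential $2$-jets of $g$ along $N$, and moves only the $g_{ab/u}$ variables), but the concluding sentence --- that because $P_1\not\equiv0$ a ``suitable choice'' of $\chi$ and $\phi$ makes $\int_NP(d\mathcal{T},g_t)\,dx$ nonconstant in $t$ --- is precisely what has to be proved, and it does not follow from $P_1\not\equiv0$ alone: a local formula can depend nontrivially on $g_{ab/u}$ pointwise while its integral is insensitive to such variations (cancellation over $N$ is exactly the danger the hypothesis allows), and one must also know that the relevant coefficient can be made nonzero at a point by an admissible background. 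The paper closes this as follows: fix one variable $g_{ab/u}$ appearing in $P$ and write $P=g_{ab/u}^{\,k}A_0+(\text{lower powers of }g_{ab/u})$ with $k\ge1$ and $A_0$ not vanishing identically as a local formula; choose a background $(g,\mathcal{T})$ and a point $x_0\in N$ with $c:=A_0(d\mathcal{T},g)(x_0)\ne0$; take the perturbation $g+\varepsilon\,c\,\phi_1(x)\phi_2(y)\,y^u\,dx^a\odot dx^b$ with $\phi_1$ a mesa function concentrated at $x_0$. Then the coefficient of $\varepsilon^k$ in $\int_NP(g_\varepsilon)\,dx$ is $c^k\int_N\phi_1(x)^kA_0(d\mathcal{T},g)(x)\,dx$, whose integrand has a fixed sign once the support of $\phi_1$ is small enough that $A_0$ retains the sign of $c$ there; hence this coefficient is nonzero, contradicting $g$-independence. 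Adding this top-power extraction, the realizability of a background with $A_0(x_0)\ne0$, and the localization step makes your argument complete, at which point it coincides with the paper's proof.
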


\begin{proof}Again, we count. Let $0\ne P\in\mathfrak{P}_{m,n,s}$ satisfy $r(P)=0$. Let $A$ be a monomial of $P$. Then
$\deg_a(A)\ge2$ for $1\le a\le s$.
We apply Lemma~\ref{L4.9} to estimate
$$
2s\le\sum_{a=1}^s\deg_a(A)\le2\operatorname{order}(A)=2n
$$
and consequently $n\ge s$. In the limiting case, none of the inequalities in these equations
could be strict. This implies, in particular, that the $\Theta_{i/\alpha_1,\alpha_2}$ and $\mathcal{T}^i_{/\alpha_1,\alpha_2}$ variables do not appear.
Furthermore, when considering the $g_{ij/\alpha_1,\alpha_2}$ and $g_{ij/k}$ variables, 
$P$ depends on the variables given.

We work purely locally. Suppose $0\ne P\in\ker\{r:\mathfrak{P}_{m,n,n}\rightarrow\mathfrak{P}_{m,n,n-1}\}$ but that we may decompose $P=g_{ab/u}^kA_0+\dots$ for $k>0$ where $A_0(x_0,0,g)\ne0$ and we have deleted the
lower terms in the variable $g_{ab/u}$. We suppose $A_0$ does not vanish as a local formula and perturb the metric $g$ so
$c:=A_0(x_0,0,g)\ne0$. Let $\phi(x,y)=\phi_1(x)\phi_2(y)$ where $\phi_1(x)$ is a mesa function which is identically 1 near
the point $(x_0,0)$ and has small compact support and let $\phi_2$ be a similar mesa function identically 1 near $y=0$. 
Let $dx^a\odot dx^b$ denote the symmetric tensor product.
We let $g_\varepsilon:=g+\varepsilon\phi_1(x)\phi_2(y)cy^u dx^a\odot dx^b$. By Lemma~\ref{L4.8}, the only variable which is effected by the perturbation
is $g_{ab/u,\varepsilon}(x,0)=g_{ab}+\varepsilon c\phi_1(x)$ and thus
$P(x,g_\varepsilon)=\varepsilon^kc^k\phi_1(x)A_0(x,g)+\dots$ where we have omitted lower order terms in $\varepsilon$. Since the
integral is independent of $\varepsilon$, we have
$$
0=\int_NA_0(d\mathcal{T},g)(x)\phi_1(x)\left\{A_0(d\mathcal{T},g)(x_0)\right\}^kdx\,.
$$
Since by hypothesis $A_0(d\mathcal{T},g)(x_0)\ne0$ and since $\phi_1(x)$ is non-negative, has small support, and is non-zero at $x_0$,
this is a contradiction.\end{proof}

\section{The local index density} 
There is a geometric definition of the restriction map $r$ which is useful.
Let $\mathcal{M}_1=(M_1,g_1,\Theta_1)$ be a Riemannian manifold of dimension $m-1$ without boundary equipped with a
closed 1-form $\Theta_1$. Let $\mathcal{M}_2=(S^1,dx^2,\Theta_2=0)$ be the circle with the usual metric and
vanishing 1-form $\Theta_2$. We form
$$
\mathcal{M}:=\mathcal{M}_1\times\mathcal{M}_2=(M_1\times S^1,g+dx^2,\Theta_1)\,.
$$
Let $P\in\mathcal{P}_{m,n}$. It is then immediate that
$$
r(P)(\mathcal{M}_1)(x_1)=P(\mathcal{M}_1\times\mathcal{M}_2)(x_1,x_2)
$$
for any point $x_2$ of the circle; the particular point in question does not matter since $\mathcal{M}_2$ is homogeneous.
Let $a_{m,n}(x,\mathcal{D}_\Theta)$ be the invariants of the heat equation for the perturbed de Rham complex. 
Since $(S^1,dx^2,0)$ is flat, $a_{m,n}$ vanishes on the circle. Consequently, we may use Assertion~6 of Lemma~\ref{L2.2}
to conclude that $a_{m,n}((x_1,x_2),\mathcal{D}_\Theta)=0$ and consequently 
$a_{m,n}(\cdot,\mathcal{D}_\Theta)\in\mathcal{P}_{m,n}$ satisfies $r(a_{m,n})=0$. We therefore deduce that
$a_{m,n}(\cdot,\mathcal{D}_\Theta)=0$ if $n<m$ by Assertion~1 of Lemma~\ref{L4.4}. This establishes the first assertion of Theorem~\ref{T1.5}.
In the limiting case where $n=m$, we have $a_{m,m}$ is independent of $\Theta$
by Assertion~2 of Lemma~\ref{L4.4}. We may therefore take $\Theta=0$
and derive the second assertion of Theorem~\ref{T1.5} from Theorem~\ref{T1.3}. This completes the proof of Theorem~\ref{T1.5}.
The derivation of Theorem~\ref{T1.13} from Theorem~\ref{T1.11} using Lemma~\ref{L4.7} is analogous and is therefore omitted;
the factor of $(-1)^m$ arises from the fact that we are using Poincar\'e duality to interchange $H^p(M;\mathbb{R})$
(which is defined by absolute boundary conditions) and $H^{m-p}(M,\partial M;\mathbb{R})$ (which is defined by relative boundary conditions)
so that $\chi(M,\partial M)=(-1)^m\chi(M)$.
Similarly, the derivation of Theorem~\ref{T1.17} from Theorem~\ref{T1.16} follows directly from Lemma~\ref{L4.10}.
\qed

\end{document}